\newcommand{\udots}{\mathinner{\mskip1mu\raise1pt\vbox{\kern7pt\hbox{.}}
\mskip2mu\raise4pt\hbox{.}\mskip2mu\raise7pt\hbox{.}\mskip1mu}}
\numberwithin{equation}{section}
\newtheorem{theorem}{Theorem}[section]
\newtheorem{lemma}{Lemma}[section]
\newtheorem{proposition}{Proposition}[section]
\title{Stabilization on periodic impulse control systems
\thanks{The authors acknowledge the financial support by
the NNSF of China under grants 11571264, 11601377.}
}
\author{Shulin Qin\thanks{School of Science, Tianjin University of Commerce, Tianjin 300134, China (\texttt{shulinqin@yeah.net})},\quad Gengsheng Wang\thanks{Center for Applied Mathematics, Tianjin University, Tianjin 300072, China (\texttt{wanggs62@yeah.net})},\quad Huaiqiang Yu\thanks{School of Mathematics, Tianjin University, Tianjin 300354, China (\texttt{huaiqiangyu@tju.edu.cn})}}
\date{}
\begin{document}
\maketitle
\begin{abstract}
This paper studies the stabilization for a kind of linear and impulse control systems in finite-dimensional spaces, where impulse instants appear  periodically. We present several characterizations on the stabilization; show how to  design feedback laws;
and provide locations for impulse  instants to ensure the stabilization. In the proofs of these results, we set up  a discrete LQ problem; derived a discrete dynamic programming principle, built up a variant of Riccati's equation; applied repeatedly the  Kalman
controllability decomposition; and used a controllability result built up in \cite{b2}.

\end{abstract}
\vskip 10pt
    \noindent
        \textbf{Keywords.} Impulse control systems, characterizations for stabilization, periodic impulse instants, variant of Riccati's equations
\vskip 10pt
    \noindent
        \textbf{2010 AMS Subject Classifications.} 93C15, 93D15
\vskip 10pt

\section{Introduction}\label{sec_intro}\label{introductionsection}

\subsection{Control system and strategy}\label{subsection1.1}

Given a state matrix $A\in \mathbb{R}^{n\times n}$, a number $\hbar\in \mathbb{N}^+:=\{1,2,\ldots\}$, $\hbar$
 control matrices $\{B_k\}_{k=1}^{\hbar}\subset \mathbb{R}^{n\times m}$ and
impulse instants $\Lambda_{\hbar}:=\{t_j\}_{j\in \mathbb{N}}$ (Here, $\mathbb{N}:=\{0,1,2,\ldots\}$.) with
\begin{eqnarray}\label{timeinstants}
t_0:=0<t_1<t_2<\cdots\;\;\mbox{and}\;\; t_{j+\hbar}-t_j=t_{\hbar}\;\forall\; j\in \mathbb{N}^+,
\end{eqnarray}
we consider the impulse control system:
\begin{equation}\label{yu-10-24-1}
\begin{cases}
    x'(t)=Ax(t),&t\in\mathbb{R}^+\setminus\Lambda_{\hbar},\\
    \triangle x(t_{j})=B_{\vartheta(j)}u_{j},&j\in\mathbb{N}^+,
\end{cases}
\end{equation}
where $\mathbb{R}^+:=[0,\infty)$,
$\triangle x(t_j):=x(t_j^+)-x(t_j)$,
    $u:=(u_j)_{j\in\mathbb{N}^+}\in l^2(\mathbb{N}^+;\mathbb{R}^m)$
and
\begin{eqnarray}\label{index1}
\vartheta(j):=j-\left[{j}/{\hbar}\right]\hbar\;\; \forall\; j\in \mathbb{N}^+.
\end{eqnarray}
Here, $[s]:=\max\{k\in\mathbb{N}\;: \; k<s\}$ for each $s>0$.
(Notice that for each $1\leq j\leq \hbar$ and each $k\in \mathbb{N}^+$, we have
$\vartheta(j+k\hbar)=\vartheta(j)=j$.)
Several notes are given in order:
\begin{itemize}
\item Throughout the paper, $\hbar\in \mathbb{N}^+$ is arbitrarily fixed and
  $\Lambda_{\hbar}$ denotes
  an element in the set:
  \begin{eqnarray}\label{instantset}
    \mathfrak{I}_\hbar:=\left\{\Lambda_{\hbar}=\{t_j\}_{j\in \mathbb{N}}\;   : \;t_{j+1}>t_j>t_0=0\;\;\mbox{and}\;\;
    t_{j+\hbar}-t_j=t_{\hbar}\; \forall  j\in\mathbb{N}^+\right\}.
    \end{eqnarray}
     Each    $\Lambda_{\hbar}=\{t_j\}_{j\in \mathbb{N}}\in\mathfrak{I}_{\hbar}$ satisfies
\begin{eqnarray}\label{1.5201967}
    \{t_{j\hbar+k}-t_{j\hbar}\}_{k=1}^\hbar=\{t_{j\hbar+k}-jt_{\hbar}\}_{k=1}^\hbar
    =\{t_k\}_{k=1}^\hbar
    \;\;\forall j\in\mathbb{N}.
\end{eqnarray}
         Because of such periodicity, we call (\ref{yu-10-24-1}) an \emph{$\hbar$-periodic impulse control system}.

\item The  control strategy in \eqref{yu-10-24-1}
can be explained by two ways.
Way One: With  $\hbar$ control matrices  $\{B_j\}_{j=1}^\hbar$ and
  impulse instants $\{t_j\}_{j\in \mathbb{N}}$ (obeying \eqref{timeinstants}) in hands, we put periodically the control matrices
into the system $x'=Ax$ at the impulse instants. Way Two: With $\hbar$ control matrices  $\{B_k\}_{k=1}^\hbar$ in hands, we first  choose impulse instants $\{t_j\}_{j\in \mathbb{N}}$ satisfying  \eqref{timeinstants}, then put the control matrices  periodically into the system $x'=Ax$ at the impulse instants.
It deserves mentioning that \eqref{yu-10-24-1} contains only $\hbar$ control matrices, but infinitely many controls.

 In the first way mentioned above, we  denote the system
    (\ref{yu-10-24-1}) by $(A, \{B_k\}_{k=1}^{\hbar},\Lambda_{\hbar})$. In the second way, we treat the system
    (\ref{yu-10-24-1})  as a pair $(A, \{B_k\}_{k=1}^{\hbar})$, while
    treat $\Lambda_{\hbar}$  as an auxiliary
     of controls $(u_j)_{j\in \mathbb{N}^+}$.

\item The control system  \eqref{yu-10-24-1} can be understood as
a model describing a kind of  multi-person cooperation.

 \item When $\hbar=1$, we necessarily have a constant $\tau>0$ so that $t_j=j\tau$ for all $j\in \mathbb{N}$. However, the case that $B=B_k\;\forall\; k$ may correspond to any
     $\hbar\in \mathbb{N}^+$ and
       any  $\{t_j\}_{j\in \mathbb{N}}$ satisfying \eqref{timeinstants}.
    {\it  When $B_k=B\in \mathbb{R}^{n\times m}\; \forall\; k\in \{1,2,\ldots,\hbar\}$, we simply write
     $\{B\}$ for
$\{B_k\}_{k=1}^\hbar$ if there is no risk causing any confusion.}

    \item One can easily check that for each $x_0\in \mathbb{R}^n$ and each  $u:=(u_j)_{j\in\mathbb{N}^+}\in
        l^2(\mathbb{R}^m)$,
     the system (\ref{yu-10-24-1}), with the initial condition  $x(0)=x_0$, has a unique solution
     $x(\cdot;u,x_0)$ in $\mathcal{P}C(\mathbb{R}^+;\mathbb{R}^n)$,
the space of all functions from $\mathbb{R}^+$ to $\mathbb{R}^n$, which are left continuous
over $\mathbb{R}^+$,
 continuous over
     $\mathbb{R}^+\setminus\{t_j\}_{j\in\mathbb{N}^+}$, and have discontinuities of first kind at the points
     $\{t_j\}_{j\in\mathbb{N}^+}$. ({\it Here and throughout the paper, $l^2(\mathbb{R}^d)$, $d\in \mathbb{N}^+$, stands for
     $l^2(\mathbb{N}^+;\mathbb{R}^d)$. The same is said about $l^\infty(\mathbb{R}^d)$.}) Furthermore, we have
 \begin{eqnarray*}\label{solutionexpress1}
    x(t;u,x_0)=e^{At}x_0+\sum_{0<t_{j}<t}e^{A(t-t_{j})}B_{\vartheta(j)}u_{j}\;\;\mbox{for any}\;t\in\mathbb{R}^+.
\end{eqnarray*}

\item The way that $\{B_k\}_{k=1}^\hbar$ and $(u_j)_{j\in \mathbb{N}^+}$ affect the system \eqref{yu-10-24-1}
differs from the way that $B$ and $v$ affect the usual control system:
\begin{equation}\label{421usualcontrolsystem}
x'(t)=Ax(t)+Bv(t),\;\;t\in\mathbb{R}^+,\;\;\mbox{where}\;\;v\in L^2(\mathbb{R}^+; \mathbb{R}^{m}).
\end{equation}
Notice that the system \eqref{421usualcontrolsystem} is time-invariant, while the system \eqref{yu-10-24-1}
is time-varying in the sense: control matrices varies at impulse instants $\hbar$-periodically.
\end{itemize}

\subsection{Main problems}\label{subsection1.2}
 We begin with introducing several concepts.

\begin{itemize}

\item
The system $(A,  \{B_k\}_{k=1}^{\hbar},\Lambda_{\hbar})$ (or \eqref{yu-10-24-1})  is
said to be
$\hbar$-stabilizable if  there is a sequence of
feedback laws $\{F_k\}_{k=1}^{\hbar}\subset\mathbb{R}^{m\times
n}$
so that
 the following closed-loop system is stable:
\begin{equation}\label{yu-10-24-4}
\begin{cases}
 x'(t)=Ax(t),&t\in\mathbb{R}^+\setminus\Lambda_{\hbar},\\
    \triangle x(t_{j})=B_{\vartheta(j)}
    F_{\vartheta(j)}x(t_{j}),&j\in\mathbb{N}^+.
\end{cases}
\end{equation}
Here, the stability of \eqref{yu-10-24-4} means that  there is $M>0$ and $\mu>0$ so that any solution
$x_\mathcal{F}(\cdot)$ to \eqref{yu-10-24-4} satisfies
\begin{eqnarray}\label{stbleexpression}
\|x_\mathcal{F}(t)\|_{\mathbb{R}^n}\leq Me^{-\mu t}\|x_\mathcal{F}(0)\|_{\mathbb{R}^n}\;\;\forall\; t\in\mathbb{R}^+.
\end{eqnarray}
{\it We simply write $\mathcal{F}:=\{F_k\}_{k=1}^{\hbar}$ (call it a feedback law) and  denote the
closed-loop system  \eqref{yu-10-24-4} by  $(A, \{B_kF_k\}_{k=1}^\hbar, \Lambda_\hbar)$.}
 Since  $\{F_k\}_{k=1}^{\hbar}$ appear at time instants
$\Lambda_{\hbar}$ $\hbar$-periodically, {\it the feedback law $\mathcal{F}$ is indeed $\hbar$-periodic time-varying.}

\item A pair $(A, \{B_k\}_{k=1}^{\hbar})$ is  said to be
$\hbar$-stabilizable if
 there is $\Lambda_\hbar$ so that the system $(A, \{B_k\}_{k=1}^{\hbar}, \Lambda_\hbar)$ is
 $\hbar$-stabilizable.

\end{itemize}

This  paper mainly concerns the following  problems on  the stabilization for  the system \eqref{yu-10-24-1}:

\begin{itemize}
\item What is the characterization of  the $\hbar$-stabilization for a system $(A,
    \{B_k\}_{k=1}^{\hbar},\Lambda_{\hbar})$?

    \item When a system $(A,  \{B_k\}_{k=1}^{\hbar},\Lambda_{\hbar})$ is $\hbar$-stabilizable, how to design a
        feedback law?

        \item  What is the characterization of  the $\hbar$-stabilization  for a pair
        $(A, \{B_k\}_{k=1}^{\hbar})$?

        \item When   a pair $(A, \{B_k\}_{k=1}^{\hbar})$ is $\hbar$-stabilizable, how to choose
        $\Lambda_{\hbar}$  so that $(A, \{B_k\}_{k=1}^{\hbar}, \Lambda_{\hbar})$ is $\hbar$-stabilizable?
\end{itemize}

We now explain why  these problems deserve to be studied. First, in the classical control theory of linear ODEs,  the
characterization on the stabilization for the control system \eqref{421usualcontrolsystem} (or $(A,B)$)
   is the well-known Kalman's
criterion: $\mbox{Rank}\;(\lambda I-A,B)=n$ for all $\lambda\in \mathbb{C}^+:=\{z\in\mathbb{C}:\mbox{Re}\;z\geq 0\}$.
When $(A,B)$ is stabilizable, the feedback law can be obtained from the Riccati equation. These constitute fundamental stabilization theory for the control system \eqref{421usualcontrolsystem}. From this point of view,  the first three problems
mentioned-above are fundamental on  the stabilization for the periodic impulse control system \eqref{yu-10-24-1}.
Second, $\Lambda_{\hbar}$ gives locations where control matrices are put and controls are active.
This shows the importance of
 the last problem mentioned-above.

\subsection{Main results}\label{subsection1.3}

The first main theorem concerns characterizations of $\hbar$-stabilization for a system $(A,
\{B_k\}_{k=1}^{\hbar}, \Lambda_\hbar)$ and the design of a feedback law. We start with the following
notations:
\begin{eqnarray*}\label{notation1.9}
\mathfrak{M}^{d}_{\hbar}:=\{(M_j)_{j\in\mathbb{N}^+}\in l^\infty(\mathbb{R}^{d\times d})\;:
\;M_{j+\hbar}=M_j\;\forall j\in\mathbb{N}^+\};
\end{eqnarray*}
\begin{eqnarray*}
\mathfrak{M}^{d}_{\hbar,+}:=\{(M_j)_{j\in\mathbb{N}^+}\in \mathfrak{M}_{\hbar}^{d}\;:
\; \mbox{each}\; M_j\; \mbox{is symmetric and positive definite} \},\; d\in \mathbb{N}^+.
\end{eqnarray*}
 Arbitrarily fix $\mathcal{Q}:=(Q_j)_{j\in\mathbb{N}^+}\in\mathfrak{M}^{n}_{\hbar,+}$ and
 $\mathcal{R}:=(R_j)_{j\in\mathbb{N}^+}\in\mathfrak{M}^{m}_{\hbar,+}$. We consider the
 LQ  problem (associated with a control system  $(A,\{B_k\}_{k=1}^{\hbar},\Lambda_{\hbar})$,
 where $\Lambda_{\hbar}=\{t_j\}_{j\in\mathbb{N}}\in\mathfrak{I}_{\hbar}$):
  \vskip 5pt
   \textbf{(I-I-LQ)}: \;\;  \emph{Given $x_0\in\mathbb{R}^n$, find a control
   $u^*=(u^*_j)_{j\in\mathbb{N}^+}\in\mathcal{U}_{ad}(x_0)$ so that
\begin{equation*}
    J(u^*;x_0)=\inf_{u\in\mathcal{U}_{ad}(x_0)}J(u;x_0),
\end{equation*}}
 where
 \begin{equation}\label{yu-11-19-2}
    \mathcal{U}_{ad}(x_0):=\{u=(u_j)_{j\in\mathbb{N}^+}\in
    l^2(\mathbb{R}^m):(x(t_j;u,x_0))_{j\in\mathbb{N}^+}\in l^2(\mathbb{R}^n)\};
\end{equation}
 \begin{equation}\label{yu-11-19-1}
    J(u;x_0):=\sum_{j=1}^{+\infty}\left[\langle Q_jx(t_j;u,x_0),x(t_j;u,x_0)\rangle_{\mathbb{R}^n}+\langle
    R_ju_j,u_j\rangle_{\mathbb{R}^m}\right],\; u\in\mathcal{U}_{ad}(x_0).
\end{equation}
Here, $\langle \cdot,\cdot\rangle_{\mathbb{R}^n}$ and $\langle \cdot,\cdot\rangle_{\mathbb{R}^m}$
  stands for the usual inner products in $\mathbb{R}^n$ and $\mathbb{R}^m$. {\it In this paper, we simply denote them
  by $\langle \cdot,\cdot\rangle$ if there is no risk causing any confusion.}

  Next, we introduce the  variant of Riccati's equation (which is associated with
 \textbf{(I-I-LQ)}):
   \begin{equation}\label{yu-11-26-1}
\begin{cases}
    e^{-A^\top(t_{k+1}-t_k)}P_ke^{-A(t_{k+1}-t_k)}-P_{k+1}\\
     \;\;\;\;\;=Q_{k+1}-P_{k+1}B_{k+1}(R_{k+1}+B_{k+1}^\top P_{k+1}B_{k+1})^{-1}B_{k+1}^\top P_{k+1},&0\leq
     k\leq \hbar-1,\\
    P_0=P_{\hbar}.
\end{cases}
\end{equation}
 Several notes on \textbf{(I-I-LQ)} and \eqref{yu-11-26-1} are given in order.
   \begin{itemize}
   \item  Double \textbf{I} in the notation \textbf{(I-I-LQ)} denotes
    the abbreviations of {\it infinite horizon} and {\it impulse controls}.
    In this LQ problem,   $ \mathcal{U}_{ad}(x_0)$ is called an \emph{admissible set}, which is independent of the choice of $\mathcal{Q}$ and $\mathcal{R}$, while $ J(\cdot;x_0)$ is called a \emph{cost functional} which depends on  the choice of $\mathcal{Q}$ and $\mathcal{R}$.
           \item In \eqref{yu-11-26-1}, unknowns $P_k$, $k=0,1,\dots \hbar$, are $n\times n$ real, symmetric and  positive  definite matrices. The solution of \eqref{yu-11-26-1}, if  exists, is denoted by $\{P_k\}_{k=0}^\hbar$.
  \end{itemize}

\begin{theorem}\label{yu-theorem-11-27-1}
Given $(A,\{B_k\}_{k=1}^\hbar,\Lambda_{\hbar})$,
     the following  statements are equivalent:
\begin{enumerate}
  \item[(i)] The system $(A,\{B_k\}_{k=1}^\hbar,\Lambda_{\hbar})$ is  $\hbar$-stabilizable.
  \item[(ii)] For each  $x_0\in\mathbb{R}^n$,   the admissible set $\mathcal{U}_{ad}(x_0)$ is not empty.
\item[(iii)] For any $\mathcal{Q}\in \mathfrak{M}^{n}_{\hbar,+}$
and $\mathcal{R}\in \mathfrak{M}^{m}_{\hbar,+}$, the equation (\ref{yu-11-26-1}) has a unique solution
$\{P_{k}\}_{k=0}^{\hbar}$.
  \item[(iv)] There is $\mathcal{Q}\in \mathfrak{M}^{n}_{\hbar,+}$
and $\mathcal{R}\in \mathfrak{M}^{m}_{\hbar,+}$ so that the equation (\ref{yu-11-26-1}) has a unique solution
$\{P_{k}\}_{k=0}^{\hbar}$.
\end{enumerate}
    Furthermore, if one of above items is true, then the feedback
    law   $\mathcal{F}=\{F_k\}_{k=1}^{\hbar}$ can be designed in the following manner:
    First, take arbitrarily $\mathcal{Q}\in \mathfrak{M}^{n}_{\hbar,+}$
and $\mathcal{R}\in \mathfrak{M}^{m}_{\hbar,+}$, then solve (\ref{yu-11-26-1}) to get   $\{P_k\}_{k=0}^\hbar$, finally set
\begin{equation}\label{yu-12-4-1-b}
   F_k:=-\left(R_{k}+B_{k}^\top P_{k}B_{k}\right)^{-1}B_{k}^\top
    P_{k}\;\;\mbox{for each}\; k=1,\dots,\hbar.
\end{equation}
   \end{theorem}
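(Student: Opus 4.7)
The strategy is to reduce the continuous periodic impulse system to a discrete $\hbar$-periodic linear system and then run the LQ/Riccati argument in that discrete setting. Set $\tau_k:=t_{k+1}-t_k$ and $y_j:=x(t_j^+)$ with $y_0=x_0$. Free evolution between impulses gives $y_j = e^{A\tau_{j-1}}y_{j-1} + B_{\vartheta(j)}u_j$ and $x(t_j) = e^{A\tau_{j-1}}y_{j-1}$; periodicity (1.6) makes the recursion $\hbar$-periodic; and since $\tau_k$ takes only finitely many positive values, exponential decay (1.7) of the closed-loop trajectory is equivalent to exponential decay of $(y_j)$.

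The main claim is proved via the cycle $(i)\Rightarrow(ii)\Rightarrow(iii)\Rightarrow(iv)\Rightarrow(i)$. For $(i)\Rightarrow(ii)$, take $u_j := F_{\vartheta(j)} x_{\mathcal{F}}(t_j)$; the exponential bound (1.7) and boundedness of the $F_k$'s put both $(u_j)$ and $(x_{\mathcal{F}}(t_j))$ into $l^2$. For $(ii)\Rightarrow(iii)$, fix $\mathcal{Q},\mathcal{R}$ and introduce the stage-$k$ value function
\[
V_k(y) := \inf\sum_{j>k}\bigl[\langle Q_j x(t_j),x(t_j)\rangle + \langle R_j u_j,u_j\rangle\bigr],
\]
where the infimum is over admissible tails with $y_k=y$; nonemptiness of $\mathcal{U}_{ad}$ in $(ii)$ yields $V_k(y)<+\infty$. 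A discrete dynamic programming principle then gives the one-step Bellman relation
\[
V_{k}(y) = \inf_{u}\bigl[\langle Q_{k+1} e^{A\tau_k}y, e^{A\tau_k}y\rangle + \langle R_{k+1}u,u\rangle + V_{k+1}(e^{A\tau_k}y + B_{k+1}u)\bigr].
\]
With the quadratic ansatz $V_k(y)=\langle P_k y,y\rangle$ and symmetric $P_k$, explicit minimization in $u$ produces the feedback (1.11) and, after substitution, exactly the Riccati recursion (1.10). Positive definiteness of $P_k$ comes from the positivity in $\mathcal{Q},\mathcal{R}$; the identity $V_{k+\hbar}=V_k$ (a direct consequence of (1.6)) gives $P_0=P_\hbar$; and uniqueness follows by showing that any symmetric positive definite $\hbar$-periodic solution of (1.10) coincides with the value function via a completion-of-squares argument.

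Step $(iii)\Rightarrow(iv)$ is immediate. For $(iv)\Rightarrow(i)$, extend $(P_k)_{k=0}^\hbar$ to all $j\in\mathbb{N}$ by $P_{k+\hbar}=P_k$ and define $F_k$ by (1.11). Using $W_j:=\langle P_j y_j,y_j\rangle$ as a Lyapunov function for the closed-loop discrete system, the Riccati identity combined with the feedback choice yields
\[
W_{j-1}-W_j = \langle Q_j x(t_j),x(t_j)\rangle + \langle R_j F_{\vartheta(j)} x(t_j),F_{\vartheta(j)} x(t_j)\rangle.
\]
Positive definiteness of $Q_j$ and invertibility of $e^{A\tau_{j-1}}$, together with uniform bounds from periodicity, give $W_{j-1}-W_j \geq c\|y_{j-1}\|^2 \geq c' W_{j-1}$ with $c,c'>0$ independent of $j$. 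Hence $W_j$, and then $\|y_j\|$, decay geometrically, and propagation by $e^{At}$ between impulses recovers (1.7).

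The main obstacle is $(ii)\Rightarrow(iii)$: proving that the infinite-horizon value function is genuinely quadratic with positive definite kernel and that these kernels assemble into a well-defined $\hbar$-periodic solution of (1.10). A natural route is to solve the backward Riccati recursion on finite horizons $[0,N\hbar]$ (standard and well-posed), exploit monotonicity in $N$, and pass to the limit using the uniform upper bound supplied by $\mathcal{U}_{ad}(x_0)\neq\emptyset$; $\hbar$-periodicity of the limit then follows from the periodicity of the finite-horizon recursions, and uniqueness requires the separate completion-of-squares argument sketched above.
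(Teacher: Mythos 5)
Your plan follows essentially the same route as the paper: $(i)\Rightarrow(ii)$ via the feedback trajectory, $(ii)\Rightarrow$ Riccati via the infinite-horizon discrete LQ value function and a one-step dynamic programming/completion-of-squares computation, periodicity of the value function giving $P_0=P_\hbar$, uniqueness by identifying any solution of \eqref{yu-11-26-1} with the value function, and $(iv)\Rightarrow(i)$ by using $\langle P_jy_j,y_j\rangle$ as a Lyapunov function with geometric decay over one period. The only substantive deviation is how quadraticity of the value function is justified — the paper invokes Kurepa's theorem (continuity plus the parallelogram law) rather than your quadratic ansatz or monotone finite-horizon limit — but this is a minor sub-step and both routes work.
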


Several notes on Theorem \ref{yu-theorem-11-27-1} are given in order:
\begin{itemize}

\item In Theorem \ref{yu-theorem-11-27-1}, our feedback controls are as: $(u_j)_{j\in \mathbb{N}^+}=(F_{\vartheta(j)}x(t_{j}))_{j\in\mathbb{N}^+}$.
When  we replace $\triangle x(t_j)$ by $\triangle^-x(t_j):=x(t_j)-x(t_j^-)$
in \eqref{yu-10-24-1}, we can get the  same results as those in Theorem \ref{yu-theorem-11-27-1},
but  feedback controls should be
$(u_{j})_{j\in\mathbb{N}^+}=(F_{\vartheta(j)}x(t^-_{j}))_{j\in\mathbb{N}^+}$.

\item If $\hbar=1$, $B_1=B$, $\Lambda_{1}=\{j\tau\}_{j\in \mathbb{N}}$ (with $\tau>0$), $Q_1=\mathbb{I}_n$ and $R_1=\mathbb{I}_m$, then  \eqref{yu-11-26-1} reads:
    \begin{equation}\label{yu-4-19-1}
    e^{-A^\top\tau}Pe^{-A\tau}-P=\mathbb{I}_n-PB(\mathbb{I}_m+B^\top PB)^{-1}B^\top P.
\end{equation}
When $(A,\{B\},\Lambda_{1})$ is $1$-stabilizable, the feedback law can be taken as:
\begin{equation*}
   \mathcal{F}=\{F_k\}_{k=1}^{1},\;\mbox{with}\; F_1:=-(\mathbb{I}_m+B^\top PB)^{-1}B^\top P,
\end{equation*}
    where $P$ is the solution of the equation (\ref{yu-4-19-1}).
\end{itemize}

The second main theorem concerns   characterizations of the $\hbar$-stabilization  for a pair $(A,\{B_k\}_{k=1}^{\hbar})$.
\begin{theorem}\label{yu-theorem-3-14-1}
    Given $(A,\{B_k\}_{k=1}^{\hbar})$, the following statements are equivalent:
\begin{enumerate}
  \item[(i)] The pair $(A,\{B_k\}_{k=1}^{\hbar})$ is $\hbar$-stabilizable.
  \item[(ii)] For any $\lambda\in\mathbb{C}^+$, it holds that
      $\mbox{Rank}\;(\lambda\mathbb{I}_n-A,B_1,\cdots,B_\hbar)=n$.
  \item[(iii)] For any $\lambda\in\sigma(A)\cap\mathbb{C}^+$, it holds that
      $\mbox{Rank}\;(\lambda\mathbb{I}_n-A,B_1,\cdots,B_\hbar)=n$.
\end{enumerate}
    Here,
 $\sigma(A)$ denotes the spectrum of $A$.
\end{theorem}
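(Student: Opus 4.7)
The equivalence $\mathrm{(ii)} \Leftrightarrow \mathrm{(iii)}$ is immediate, since $\lambda \mathbb{I}_n - A$ is invertible for $\lambda \notin \sigma(A)$, so the rank is automatically $n$ there. For $\mathrm{(i)} \Rightarrow \mathrm{(ii)}$, I would argue by contradiction in the Popov--Belevich--Hautus style: if $\mathrm{Rank}(\lambda_0 \mathbb{I}_n - A, B_1, \ldots, B_\hbar) < n$ for some $\lambda_0 \in \mathbb{C}^+$, there exists a nonzero $v \in \mathbb{C}^n$ with $v^* A = \lambda_0 v^*$ and $v^* B_k = 0$ for all $k$. Then for any $\Lambda_\hbar$ and any feedback $\mathcal{F} = \{F_k\}_{k=1}^\hbar$, at each impulse instant $v^* \triangle x_\mathcal{F}(t_j) = v^* B_{\vartheta(j)} F_{\vartheta(j)} x_\mathcal{F}(t_j) = 0$, while between impulses $\tfrac{d}{dt}(v^* x_\mathcal{F}) = \lambda_0 v^* x_\mathcal{F}$. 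Hence $v^* x_\mathcal{F}(t) = e^{\lambda_0 t} v^* x_\mathcal{F}(0)$, whose modulus never drops below $|v^* x_\mathcal{F}(0)|$. Picking $x_0 \in \mathbb{R}^n$ with $v^* x_0 \neq 0$ contradicts \eqref{stbleexpression}.

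The substantial direction is $\mathrm{(ii)} \Rightarrow \mathrm{(i)}$. My plan is to arrange that, for a suitably chosen $\Lambda_\hbar$, sampling the impulse system at the first instant of each period produces a classical stabilizable discrete-time LTI system. With $z_j := x(t_{j\hbar+1})$, unfolding \eqref{yu-10-24-1} over one period using the periodicity \eqref{1.5201967} yields
\begin{equation*}
z_{j+1} = \Phi z_j + G \mathbf{u}_j, \qquad \Phi := e^{A t_\hbar}, \qquad G := \bigl[\, e^{A(t_\hbar + t_1 - t_k)} B_k \,\bigr]_{k=1}^\hbar,
\end{equation*}
with $\mathbf{u}_j := (u_{j\hbar+1}, \ldots, u_{j\hbar+\hbar})$. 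If $(\Phi, G)$ is discrete-time stabilizable, some $\mathbf{u} \in l^2$ drives $(z_j)_j$ to zero in $l^2$, and the intra-period identity $x(t_{j\hbar+k}) = e^{A(t_k - t_1)} z_j + \sum_{l < k} e^{A(t_k - t_l)} B_l u_{j\hbar + l}$ then forces $(x(t_j))_{j \in \mathbb{N}^+} \in l^2(\mathbb{R}^n)$, i.e.\ $\mathcal{U}_{ad}(x_0) \neq \emptyset$. Theorem \ref{yu-theorem-11-27-1} then concludes the $\hbar$-stabilizability of $(A, \{B_k\}_{k=1}^\hbar, \Lambda_\hbar)$.

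It therefore suffices to pick $\Lambda_\hbar$ so that $\mathrm{Rank}(\lambda \mathbb{I}_n - \Phi, G) = n$ for every $|\lambda| \geq 1$, by the classical discrete Hautus criterion. I would choose $t_\hbar > 0$ outside the countable \emph{resonant} set $\{t > 0 : (\mu - \mu') t \in 2\pi i \mathbb{Z} \text{ for some distinct } \mu, \mu' \in \sigma(A)\}$, and then place $0 < t_1 < \cdots < t_{\hbar-1} < t_\hbar$ arbitrarily. Under this non-resonance condition, a Jordan-form analysis shows that each generalized left eigenspace of $\Phi$ for eigenvalue $\lambda$ coincides with the generalized left eigenspace of $A$ for the unique $\mu$ with $e^{\mu t_\hbar} = \lambda$, and $|\lambda| \geq 1 \Leftrightarrow \mathrm{Re}\, \mu \geq 0$. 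Using $v^* e^{A s} = e^{\mu s} v^*$ on such $v$, the identity $v^* G_k = e^{\mu (t_\hbar + t_1 - t_k)} v^* B_k$ reduces $v^* G = 0$ to $v^* B_k = 0$ for all $k$, and then (ii) forces $v = 0$, delivering the discrete rank condition.

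The main obstacle is this last rank transfer when $A$ is non-diagonalizable: one must propagate $v^* B_k = 0$ through full Jordan chains of $A$ rather than simple eigenvectors, and verify that the chain structure of $\Phi$ refines that of $A$ under non-resonance. This is where I expect the repeated Kalman controllability decomposition announced in the abstract -- successively splitting off the $(A,B_1), (A,B_2), \ldots, (A,B_\hbar)$-controllable subspaces so as to reduce to an uncontrollable part whose spectrum lies strictly in $\mathrm{Re}\,\mu < 0$ -- together with the controllability result of \cite{b2} to do the work, bypassing an explicit Jordan-block calculation.
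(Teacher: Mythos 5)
Your proposal is correct in substance but takes a genuinely different route from the paper on both non-trivial implications. For $(i)\Rightarrow(ii)$ the paper applies the Kalman decomposition of Lemma \ref{yu-lemma-5-10-1} to the block matrix $\mathcal{B}=(B_1,\dots,B_\hbar)$ and deduces $\sigma(A_3)\cap\mathbb{C}^+=\emptyset$ for the uncontrollable block; your PBH-style left-eigenvector argument proves the same fact more directly and is valid, since $v^*x_{\mathcal{F}}(t)$ is indeed unchanged by every impulse and has modulus $e^{(\mathrm{Re}\,\lambda_0)t}|v^*x_0|$, which cannot decay. For the hard implication $(iii)\Rightarrow(i)$ the paper proves the stronger Proposition \ref{yu-proposition-5-13-2}: \emph{every} $\Lambda_\hbar\in\mathfrak{I}_\hbar\cap\mathfrak{L}_{A,\mathcal{B},\hbar}$ works. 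This is done by peeling off controllable subspaces of $(A,B_\hbar)$, then of the resulting uncontrollable part against $B_{\hbar-1}$, and so on, invoking the impulse-controllability result of \cite{b2} on each controllable block and assembling block-triangular feedback laws; that construction is what simultaneously yields Theorem \ref{yu-theorem-5-13-1}. Your sampled-data reduction to the discrete pair $(\Phi,G)$ with a non-resonant $t_\hbar$, followed by Theorem \ref{yu-theorem-11-27-1} to convert $\mathcal{U}_{ad}(x_0)\neq\emptyset$ into stabilizability, is shorter and leans on classical discrete-time theory, but it produces only \emph{one} admissible $\Lambda_\hbar$ (generic $t_\hbar$) and cannot recover the explicit location set of Theorem \ref{yu-theorem-5-13-1} nor the constructive feedback of \eqref{yu-12-4-1-b}.

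One reassurance about the step you flag as the main obstacle: no propagation through full Jordan chains is needed, because the discrete Hautus test only requires ruling out genuine left eigenvectors $v^*\Phi=\lambda v^*$ with $|\lambda|\geq1$ and $v^*G=0$. Under your non-resonance condition such a $v$ lies in the generalized left eigenspace of $A$ for the unique $\mu$ with $e^{\mu t_\hbar}=\lambda$, on which $\Phi=e^{\mu t_\hbar}e^{Nt_\hbar}$ with $N$ nilpotent; the relation $v^*e^{Nt_\hbar}=v^*$ gives $v^*N\bigl(t_\hbar\mathbb{I}+\tfrac{t_\hbar^2}{2}N+\cdots\bigr)=0$ with the bracketed factor invertible, hence $v^*N=0$, i.e.\ $v^*A=\mu v^*$. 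Then $v^*G_k=e^{\mu(t_\hbar+t_1-t_k)}v^*B_k$, and statement $(ii)$ forces $v=0$, exactly as you outlined. So your argument closes on its own, without falling back on the paper's decomposition machinery.
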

We now give a remark  on Theorem \ref{yu-theorem-3-14-1}.

\begin{itemize}
\item
 By the classical stabilization theory on \eqref{421usualcontrolsystem} and by using  Theorem
 \ref{yu-theorem-3-14-1}, we can easily see that the system  \eqref{421usualcontrolsystem} is stabilizable if and only if
the system \eqref{yu-10-24-1}, where $B_k=B$ for all $k$, is $\hbar$-stabilizable.
This  gives connection between the usual control system \eqref{421usualcontrolsystem} and the periodic impulse control
system \eqref{yu-10-24-1}, from perspective of the stabilization.

\end{itemize}

   The third main theorem gives, for an  $\hbar$-stabilizable pair $(A, \{B_k\}_{k=1}^{\hbar})$, a set of such $\Lambda_{\hbar}$ making $(A, \{B_k\}_{k=1}^{\hbar},
   \Lambda_{\hbar})$  $\hbar$-stabilizable.
   We start with  some notations. Given $(A,\{B_k\}_{k=1}^{\hbar})$, we write
   \begin{eqnarray}\label{1.162019611}
   \mathcal{B}:=\left(
    \begin{array}{cccc}
    B_1 & B_2 &\cdots & B_\hbar \\
    \end{array}
    \right)(\in\mathbb{R}^{n\times m\hbar})
   \end{eqnarray}
       and let
\begin{equation}\label{yu-3-17-3}
    d_A:=\min \left\{\pi/|\mbox{Im}\lambda|:\lambda\in \sigma(A)\right\}.
\end{equation}
    (Here, we use the convention:$\frac{1}{0}=+\infty$.) Given  $C\in\mathbb{R}^{i\times i}$
    and $D\in\mathbb{R}^{i\times k}$ with $i,k\in\mathbb{N}^+$, we write
\begin{equation}\label{yu-3-17-4}
    q^{i,k}(C,D):=\max\{\mbox{dim}\mathcal{V}^{i}_C(d):d\;\mbox{is a column of}\; D\},
\end{equation}
where $\mathcal{V}_C^{i}(d):=\mbox{span}\{d,Cd,\ldots,C^{i-1}d\}$ (the linear subspace
generated by all column vectors $d$, $Cd\dots$, $C^{i-1}d$). Let
\begin{equation}\label{yu-3-18-1}
    \mathfrak{L}_{A,\mathcal{B},\hbar}:=\{\{t_j\}_{j\in\mathbb{N}}\subset\mathbb{R}^+:
    \mbox{Card}((s,s+d_A)\cap \{t_j\}_{j\in\mathbb{N}})\geq \hbar(q^{n,m\hbar}(A,\mathcal{B})+2)\;\forall
    s\in\mathbb{R}^+\},
\end{equation}
where $\mbox{Card}((s,s+d_A)\cap \{t_j\}_{j\in\mathbb{N}})$ denotes the number of elements of the set $(s,s+d_A)\cap
\{t_j\}_{j\in\mathbb{N}}$.

\begin{theorem}\label{yu-theorem-5-13-1}
     If a pair $(A,\{B_k\}_{k=1}^{\hbar})$ is $\hbar$-stabilizable,
     then for any
     $\Lambda_{\hbar}$ in
    $\mathfrak{I}_{\hbar}\cap \mathfrak{L}_{A,\mathcal{B},\hbar}$, the system $(A,\{B_k\}_{k=1}^{\hbar},\Lambda_{\hbar})$ is $\hbar$-stabilizable.
     \end{theorem}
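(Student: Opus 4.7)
Plan. I would verify condition (ii) of Theorem~\ref{yu-theorem-11-27-1} — namely, that $\mathcal{U}_{ad}(x_0)\neq\emptyset$ for every $x_0\in\mathbb{R}^n$ — since this equivalently yields the $\hbar$-stabilizability of $(A,\{B_k\}_{k=1}^\hbar,\Lambda_\hbar)$. Note that the hypothesis together with Theorem~\ref{yu-theorem-3-14-1} implies $\mathrm{Rank}(\lambda\mathbb{I}_n - A, B_1,\ldots,B_\hbar) = n$ for every $\lambda\in\sigma(A)\cap\mathbb{C}^+$, i.e.\ the aggregated pair $(A,\mathcal{B})$ is stabilizable in the classical Kalman sense.

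First I would perform a Kalman controllability decomposition of $(A,\mathcal{B})$: in suitable coordinates, $\mathbb{R}^n = V_c \oplus V_{uc}$, with $V_{uc}$ an $A$-invariant complement of the reachable subspace $V_c$ of $(A,\mathcal{B})$, and with the restriction $A_{uc}$ of $A$ to $V_{uc}$ being Hurwitz (this is where the classical stabilizability of $(A,\mathcal{B})$ enters). In these coordinates the $V_{uc}$-component of the state is untouched by any impulse control and decays exponentially on its own, so it is enough to produce a finitely supported $u\in l^2(\mathbb{R}^m)$ that drives the $V_c$-component of $x(\cdot;u,x_0)$ to $0$ by some finite time $T$: then setting $u_j=0$ for all $j$ with $t_j>T$ yields both $(u_j)\in l^2(\mathbb{R}^m)$ and $(x(t_j;u,x_0))\in l^2(\mathbb{R}^n)$, exactly the membership $u\in\mathcal{U}_{ad}(x_0)$.

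To construct such a $u$, I would group the impulses $\hbar$ at a time. Using the $\hbar$-periodicity \eqref{1.5201967}, the net effect on the state at the end of a window containing $k$ consecutive complete $\hbar$-blocks is a sum of $k$ terms of the form $e^{A(T-s_\ell)}\mathcal{B}\,U_\ell$, where $s_\ell$ lies in the corresponding block and $U_\ell\in\mathbb{R}^{m\hbar}$ collects the $\hbar$ controls of that block. The reachability problem for $(A,\{B_k\}_{k=1}^\hbar,\Lambda_\hbar)$ on $V_c$ is thus reduced to the corresponding reachability problem for the single-matrix impulse system $(A,\mathcal{B})$ along the aggregated sequence of instants. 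The numerical threshold $\hbar(q^{n,m\hbar}(A,\mathcal{B})+2)$ in the definition \eqref{yu-3-18-1} is chosen precisely so that, irrespective of how the impulses align with the $\hbar$-blocks, every window of length $d_A$ contains at least $q^{n,m\hbar}(A,\mathcal{B})+2$ complete aggregated $\mathcal{B}$-impulses. I would then invoke the controllability result of \cite{b2}, which asserts that the single-matrix impulse system $(A,\mathcal{B})$ is null-controllable (restricted to its controllable subspace $V_c$) once an interval of length at most $d_A$ contains at least $q^{n,m\hbar}(A,\mathcal{B})+2$ impulse instants; this yields admissible $U_\ell$'s, and unpacking them produces the desired $u_j$ steering the $V_c$-component of the state to zero in finite time.

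The main obstacle, and really the only non-routine step, is the combinatorial bookkeeping in the previous paragraph: making rigorous that ``$\hbar(q^{n,m\hbar}(A,\mathcal{B})+2)$ impulses in every window of length $d_A$'' truly guarantees $q^{n,m\hbar}(A,\mathcal{B})+2$ complete aggregated $\mathcal{B}$-impulses in some subinterval of length $\leq d_A$, so that the hypotheses of the controllability theorem of \cite{b2} are met. Once this is handled, the Kalman decomposition, the truncate-to-zero extension, and the passage through Theorem~\ref{yu-theorem-11-27-1}(ii) are essentially formal.
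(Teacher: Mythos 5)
Your overall strategy (reduce to condition $(ii)$ of Theorem~\ref{yu-theorem-11-27-1} via a Kalman decomposition of $(A,\mathcal{B})$ and the controllability result of \cite{b2}) is reasonable, but two of your steps fail as written.

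First, the aggregation step is invalid. The contribution of the $\ell$-th block of impulses to $x(T)$ is $\sum_{k=1}^{\hbar}e^{A(T-t_{(\ell-1)\hbar+k})}B_k u_{(\ell-1)\hbar+k}$, with $\hbar$ \emph{distinct} exponential factors; it is not of the form $e^{A(T-s_\ell)}\mathcal{B}\,U_\ell$ for any single instant $s_\ell$, and its range is in general not the range of any such matrix. Consequently Lemma~\ref{wang-qin} (the result of \cite{b2}) cannot be invoked for a fictitious ``single-matrix impulse system $(A,\mathcal{B})$'': that lemma requires the \emph{same} control matrix to act at each listed instant. So the combinatorial bookkeeping you flag as the main obstacle is not the real issue; the reduction itself is. A workable use of \cite{b2} along your lines would apply Lemma~\ref{wang-qin} to each pair $(A,B_k)$ separately, on the subsequence $\{t_j:\vartheta(j)=k\}$ (each residue class contributes at least $q^{n,m\hbar}(A,\mathcal{B})+2\geq q^{n,m}(A,B_k)$ instants in a window of length $<d_A$, by \eqref{yu-3-18-1}), and then sum the resulting ranges over $k$. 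The paper sidesteps this entirely via Proposition~\ref{yu-proposition-5-13-2}: an iterated Kalman decomposition that peels off one $B_k$ at a time and assembles an explicit $\hbar$-periodic feedback, rather than an open-loop reachability argument.

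Second, the truncation step has a gap. An $A$-invariant complement of $V_c$ need not exist, and for a non-invariant complement the decomposition reads $LAL^{-1}=\left(\begin{smallmatrix}A_1&A_2\\0&A_3\end{smallmatrix}\right)$ with $A_2\neq 0$ in general; after the controls are switched off, the Hurwitz component $z$ feeds back into the controllable component through $A_2$, so steering the $V_c$-component to $0$ at time $T$ does not yield a decaying (or even $l^2$) trajectory. For $A=\left(\begin{smallmatrix}1&1\\0&-1\end{smallmatrix}\right)$, $\mathcal{B}=e_1$, $V_c=\mathrm{span}\{e_1\}$: with $y(T)=0$ and $z(T)=1$ the free solution gives $y(t)=\sinh(t-T)\to\infty$. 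The correct open-loop target is the stable subspace $E^s$ of $A$, which is reachable because $E^s+V_c=\mathbb{R}^n$ under the Hautus condition of Theorem~\ref{yu-theorem-3-14-1}$(ii)$, not the set $\{y=0\}$; alternatively one stabilizes the controllable block by feedback, as in Step 2 of the proof of Lemma~\ref{yu-proposition-5-13-1}, where the convolution $\int_0^t\widetilde{S}_{\widetilde{\mathcal{F}}}(t,s)A_2z(s)\,ds$ decays because both factors do.
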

     Several notes on  Theorem \ref{yu-theorem-5-13-1} are given in order:

\begin{itemize}
\item
We have that
$\mathfrak{I}_{\hbar}\cap \mathfrak{L}_{A,\mathcal{B},\hbar}\neq \emptyset$ for any $\hbar\in\mathbb{N}^+$.
Indeed, when $0<\tau<\frac{d_A}{\hbar[q^{n,m\hbar}(A,\mathcal{B})+2]+2}$, we have that $\Lambda_{\hbar}=\{j\tau\}_{j\in\mathbb{N}}\in \mathfrak{I}_{\hbar}\cap
\mathfrak{L}_{A,\mathcal{B},\hbar}$.

\item In the case that  $\sigma(A)\subset\mathbb{R}$, we have $d_A=+\infty$
which implies any infinite sequence $\{t_j\}_{j\in\mathbb{N}}$ of $\mathbb{R}^+$
 with $t_j\to +\infty$ as $j\to +\infty$ is in $\mathfrak{L}_{A,\mathcal{B},\hbar}$.
So if $(A,\{B_k\}_{k=1}^{\hbar})$ is
$\hbar$-stabilizable and $\sigma(A)\subset\mathbb{R}$, then $\forall\; \Lambda_{\hbar}\in \mathfrak{I}_{\hbar}$,
   $(A,\{B_k\}_{k=1}^{\hbar}, \Lambda_{\hbar})$ is
  $\hbar$-stabilizable.

\end{itemize}

\subsection{Novelties of this paper}

\begin{itemize}
\item The control strategy presented in \eqref{yu-10-24-1} seems to be new for us.

 \item It seems for us that  characterizations on the stabilization for impulse control systems
have not been touched upon. (At least, we do not find any such literature.) From this perspective, the
equivalent results in Theorem \ref{yu-theorem-11-27-1}, as well as in Theorem \ref{yu-theorem-3-14-1}, are new.

\item It seems for us that studies on locations of impulse instants for the stabilization of impulse control
    system have not been touched upon. (At least, we do not find any such literature.) From this perspective,
     Theorem \ref{yu-theorem-5-13-1} is new.

    \item  Since controls affect \eqref{yu-10-24-1} and \eqref{421usualcontrolsystem} in different ways, we set up  \textbf{(I-I-LQ)} which differs from the usual LQ problem for the  control system \eqref{421usualcontrolsystem}. This modified LQ problem leads to a discrete dynamic programming principle,
       from which, we get the variant of Riccati's equation  \eqref{yu-11-26-1} and the  feedback law \eqref{yu-12-4-1-b}  
        differing from those for the usual control system \eqref{421usualcontrolsystem}.

\end{itemize}

\subsection{Related works}
\begin{itemize}

\item About the stabilization for impulse control systems,
we would like to mention \cite{b16, b18, b21, b17, b20, b10} and the references therein.

In \cite{b17}, the authors studied the stabilization for the system:
$$
x'(t)=Ax(t)+Bu(t),\;\;t\in\mathbb{R}^+\setminus\{t_j\}_{j\in\mathbb{N}^+},\;\;
    x(t_j^+)=A_Ix(t_j), j\in\mathbb{N}^+.
    $$
Under some assumption on time instants $\{t_j\}_{j\in\mathbb{N}^+}$, it was obtained that
if the above system has some reachable property, then it is  stabilizable.
This result  was generalized in \cite{b20} via another way.

In \cite{b21}, the author built up a Kalman-type controllability
    decomposition for the system:
$$
 x'(t)=Ax(t)+Bu(t), \;t\in\mathbb{R}^+\setminus\{t_j\}_{j\in\mathbb{N}^+},\;\;
    x(t_j^+)=A_Ix(t_j)+B_Iu_j, \;j\in\mathbb{N}^+.
$$
 Based on the decomposition, a necessary condition, as well as a sufficient condition, for the stabilization of the above system was given. Both results are related to some kind of reachability. The stabilization of the above system was also studied in \cite{b18}.

 In \cite{b10}, the authors studied the stabilization for impulse control heat equations.

 \item About the controllability for impulse control systems, we mention works: \cite{b15,b4,b5,b19,b2,b12,b14} and the references therein.

The work \cite{b19} concerns the  system:

$$
 x'(t)=Ax(t)+Bu(t),\; t\in\mathbb{R}^+\setminus\{t_j\}_{j\in\mathbb{N}^+},\;\;
    x(t_j^+)=A_Ix(t_j)+B_Iu_j,\;j\in\mathbb{N}^+.
$$
The authors gave links among reachable sets, some invariant subspaces and the feedback-reversibility.

In \cite{b2}, the authors studied the controllability for the system:
 $$
 x'(t)=Ax(t),\; t\in[0,T]\setminus\{t_j\}_{j=1}^q,\;\;
    \triangle x(t_j)=Bu_j,\;j\in\{1,2,\ldots,q\}.
    $$
(Here $T>0$, $q\in \mathbb{N}^+$ and $\{t_j\}_{j=1}^q\subset (0,T)$.)
They found  $q^{n,m}(A,B)\in \mathbb{N}^+$ (defined in (\ref{yu-3-17-4}) with $C=A$ and $D=B$) so that
for each $q\geq q^{n,m}(A,B)$ and each  $\{t_j\}_{j=1}^{q}\subset (0,T)$ with
$t_{q}-t_1<d_A$, the above system is controllable, provided that $(A,B)$ holds  Kalman controllability rank condition. This result is  used in the proofs of Theorem \ref{yu-theorem-3-14-1}, as well as Theorem \ref{yu-theorem-5-13-1}.

\item About optimal control for impulse control systems, we mention the works: \cite{b3,b8,b9,b11,b13}
and the references therein.

\item About general theory for impulse systems, we refer readers to \cite{b6,b7,b1} and the
references therein.
\end{itemize}

\subsection {Plan of this paper}
The rest of this paper is organized as follows: Section 2 proves Theorem \ref{yu-theorem-11-27-1}. Section 3 shows Theorem \ref{yu-theorem-3-14-1} and Theorem \ref{yu-theorem-5-13-1}. Section 4 gives conclusions and
perspectives.

\section{Proof of main results (Part I)}
The aim of this section is to prove Theorem \ref{yu-theorem-11-27-1}.
\subsection{On LQ problem}\label{section2}
               We arbitrarily fix a system $(A, \{B_k\}_{k=1}^{\hbar},\Lambda_{\hbar})$,
         $\mathcal{Q}=(Q_j)_{j\in\mathbb{N}^+}\in\mathfrak{M}^{n}_{\hbar,+}$ and
         $\mathcal{R}=(R_j)_{j\in\mathbb{N}^+}\in \mathfrak{M}^{m}_{\hbar,+}$.
          Recall   (\ref{yu-11-19-2}) and (\ref{yu-11-19-1}) for
         the definitions of  $\mathcal{U}_{ad}(x_0)$ and $J(u;x_0)$.
          Throughout this subsection, we assume
         \begin{eqnarray}\label{assum-admmi}
         \mathcal{U}_{ad}(x_0)\neq \emptyset\;\;\mbox{for all}\;\;x_0\in\mathbb{R}^n.
         \end{eqnarray}
         The value-function of the problem \textbf{(I-I-LQ)} is defined as:
      \begin{equation}\label{yu-3-8-1-b}
                       V(x_0):=\inf_{u\in\mathcal{U}_{ad}(x_0)}J(u;x_0),\; x_0\in \mathbb{R}^n.
         \end{equation}
        Because of \eqref{assum-admmi}, we have that $V(x_0)<+\infty$ for each $x_0\in \mathbb{R}^n$.
         From \eqref{yu-3-8-1-b}, \eqref{yu-11-19-2} and \eqref{yu-11-19-1}, one can directly check that
         $V(\cdot)$ is continuous and satisfies the parallelogram law:
         $$
         V(x_0+y_0)+V(x_0-y_0)=2(V(x_0)+V(y_0))\;\;\mbox{for all}\;\; x_0,y_0\in \mathbb{R}^n.
         $$
         (We omit the detailed proof here.) Then by \cite[Theorem 3]{b23}, we have

\begin{lemma}
There is a symmetric and positive definite matrix $P\in \mathbb{R}^{n\times n}$ so that
$V(x_0)=\langle Px_0, x_0\rangle$ for all $x_0\in \mathbb{R}^n$.
\end{lemma}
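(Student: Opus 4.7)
My plan is to combine the two properties of $V$ highlighted just before the lemma---continuity and the parallelogram identity---with the cited representation theorem, and then give a direct argument for positive definiteness of the resulting matrix.

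First I would record the justification for the two properties. Joint continuity of $(u,x_0)\mapsto J(u;x_0)$ on $l^2(\mathbb{R}^m)\times \mathbb{R}^n$ follows from the explicit series representation of $x(t_j;u,x_0)$ recalled in Subsection 1.1 together with the uniform bounds on $\mathcal{Q}\in \mathfrak{M}^n_{\hbar,+}$ and $\mathcal{R}\in \mathfrak{M}^m_{\hbar,+}$. To pass to continuity of the infimum $V(x_0)$, I would note that $x_0\mapsto \mathcal{U}_{ad}(x_0)$ is affine (a translation once a reference admissible control for $x_0$ is fixed), which allows me to transfer near-minimizers between $\mathcal{U}_{ad}(x_0)$ and $\mathcal{U}_{ad}(y_0)$ for $y_0$ near $x_0$. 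The parallelogram identity for $V$ would be deduced from its counterpart for $J$: linearity of $(u,x_0)\mapsto x(t_j;u,x_0)$ combined with the pointwise identity $\langle Q_j(a+b),a+b\rangle+\langle Q_j(a-b),a-b\rangle=2\langle Q_ja,a\rangle+2\langle Q_jb,b\rangle$ (and the same for $R_j$) yields $J(u+v;x_0+y_0)+J(u-v;x_0-y_0)=2J(u;x_0)+2J(v;y_0)$; taking infima in the right order and exploiting the averaging identity $u=\tfrac12((u+v)+(u-v))$ transfers the relation to $V$.

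With these two properties in hand, \cite[Theorem 3]{b23} immediately produces a symmetric matrix $P\in \mathbb{R}^{n\times n}$ with $V(x_0)=\langle Px_0,x_0\rangle$ for every $x_0\in\mathbb{R}^n$. It then remains to show that $P$ is positive definite. Positive semi-definiteness is clear, since every summand of $J(u;x_0)$ is nonnegative and hence $V(x_0)\geq 0$. For strict positivity I would argue by contradiction: suppose $V(x_0)=0$ and pick a minimizing sequence $(u^{(k)})\subset \mathcal{U}_{ad}(x_0)$ with $J(u^{(k)};x_0)\to 0$. Nonnegativity of the summands together with the positive definiteness of $Q_1$ and $R_1$ forces $x(t_1;u^{(k)},x_0)\to 0$ and $u^{(k)}_1\to 0$ in $\mathbb{R}^m$. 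But $x(t_1;u^{(k)},x_0)=e^{At_1}x_0+B_1u^{(k)}_1$, so letting $k\to\infty$ yields $e^{At_1}x_0=0$, and invertibility of $e^{At_1}$ forces $x_0=0$. Hence $V(x_0)>0$ whenever $x_0\neq 0$, and $P$ is positive definite.

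I do not foresee a genuinely hard step here. The subtlest point is the transfer of the parallelogram identity from $J$ to its infimum $V$, which needs a symmetric choice of near-minimizers in $\mathcal{U}_{ad}(x_0+y_0)$ and $\mathcal{U}_{ad}(x_0-y_0)$ together with the averaging trick above; apart from that, everything is a routine consequence of the quadratic form of $J$ and the linear dependence of the state on $(u,x_0)$.
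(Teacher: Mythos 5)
Your proposal is correct and follows the same route as the paper: the paper simply asserts (without detail) that $V$ is continuous and satisfies the parallelogram law and then invokes \cite[Theorem 3]{b23}, whereas you supply the omitted verifications (the averaging bijection $(u,v)\mapsto(u+v,u-v)$ between $\mathcal{U}_{ad}(x_0)\times\mathcal{U}_{ad}(y_0)$ and $\mathcal{U}_{ad}(x_0+y_0)\times\mathcal{U}_{ad}(x_0-y_0)$ is exactly what is needed) together with the positive definiteness of $P$, which the paper leaves implicit. One small slip: with the paper's convention $\triangle x(t_j)=x(t_j^+)-x(t_j)$ the solution is left continuous, so $x(t_1;u,x_0)=e^{At_1}x_0$ with no $B_1u_1$ term; this only simplifies your positive-definiteness argument, since $J(u;x_0)\geq\langle Q_1e^{At_1}x_0,e^{At_1}x_0\rangle$ already forces $V(x_0)>0$ for $x_0\neq 0$.
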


  Let  $x(\cdot;u,x_0,l)$, with $l\in\mathbb{N}$ and
 $x_0\in\mathbb{R}^n$,
be the solution to the equation:
\begin{equation*}
\begin{cases}
    x'(t)=Ax(t),&t\in(t_l,+\infty)\setminus\Lambda_{\hbar},\\
    \triangle x(t_j)=B_{\vartheta(j)}u_j,&j>l,\\
    x(t_l^+)=x_0.
\end{cases}
\end{equation*}
  We define, for each $x_0\in\mathbb{R}^n$ and each
   $l\in\mathbb{N}$,
\begin{equation}\label{definition of new admis}
    \mathcal{U}_{ad}(x_0;l):=\{u\in l^2(\mathbb{R}^m):(x(t_j;u,x_0,l))_{j>l}\in l^2(\mathbb{R}^n)\};
\end{equation}
    \begin{eqnarray}\label{yu-11-25-1}
    J(u;x_0,l):=\sum_{j=l+1}^{+\infty}\big[\langle Q_jx(t_j;u,x_0,l),x(t_j;u,x_0,l)\rangle+\langle
    R_ju_j,u_j\rangle\big],\; u\in \mathcal{U}_{ad}(x_0;l).
\end{eqnarray}
(They correspond to $ \mathcal{U}_{ad}(x_0)$ and $J(u;x_0)$ respectively.)
       One can easily check that
       \begin{eqnarray}\label{4.26,2.6}
       \mathcal{U}_{ad}(x_0;0)=\mathcal{U}_{ad}(x_0)\;\forall\; x_0\in \mathbb{R}^n
       \end{eqnarray}
       and that  for any $t>t_l$ (with $l\in \mathbb{N}$), $x_0\in\mathbb{R}^n$ and $u\in\mathcal{U}_{ad}(x_0;0)$,
\begin{equation}\label{yu-11-25-3}
    x(t;u,x_0)=x(t;u,x_0,0)=x(t;u,x(t_1^+;u,x_0),1)
    =\cdots=x(t;u,x(t_l^+;u,x_0),l).
\end{equation}

\par
   We now consider, for each $l\in\mathbb{N}$, the LQ problem \textbf{$(\mbox{I-I-LQ})_{l}$}: Given
   $x_0\in\mathbb{R}^n$,  find a control $u^*_l\in \mathcal{U}_{ad}(x_0;l)$ so that
\begin{equation}\label{4.24.2.6}
    V(x_0;l):=\inf_{u\in \mathcal{U}_{ad}(x_0;l)}J(u;x_0,l)=J(u^*_l;x_0,l).
\end{equation}
    It is clear that  \textbf{$(\mbox{I-I-LQ})_{0}$} coincides with  \textbf{$(\mbox{I-I-LQ})$}
    and $V(\cdot)=V(\cdot;0)$ (see (\ref{yu-3-8-1-b})).
    We call $V(\cdot;l)$ the value-function of \textbf{$(\mbox{I-I-LQ})_{l}$}.

\begin{lemma}\label{yu-lemma-3-12-1}
For any $l\in\mathbb{N}^+$ and $x_0\in\mathbb{R}^n$, it holds that
        $\mathcal{U}_{ad}(x_0;l)\neq \emptyset$ and $V(x_0;l)<+\infty$.
\end{lemma}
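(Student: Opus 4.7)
The plan is to reduce the claim to the assumption \eqref{assum-admmi} by exploiting the two layers of $\hbar$-periodicity of the system (in the impulse instants and in the control matrices), padding with zero controls when necessary.

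First I would handle the ``easy'' case $l = k\hbar$ for some $k \in \mathbb{N}$. The key identities, both coming from \eqref{1.5201967} and the definition of $\vartheta$ in \eqref{index1}, are $t_{k\hbar+j}-t_{k\hbar}=t_j$ and $\vartheta(k\hbar+j)=\vartheta(j)$ for all $j \in \mathbb{N}^+$. Given any $\tilde u \in \mathcal{U}_{ad}(y)$, which exists by \eqref{assum-admmi}, I would define $u \in l^2(\mathbb{R}^m)$ by $u_j := 0$ for $j \le k\hbar$ and $u_j := \tilde u_{j-k\hbar}$ for $j > k\hbar$, and check via the solution formula that
\[
x(t_j; u, y, k\hbar) = x(t_{j-k\hbar}; \tilde u, y) \quad \text{for all } j > k\hbar.
\]
This puts the whole trajectory in $l^2$ and gives $u \in \mathcal{U}_{ad}(y; k\hbar)$, hence $\mathcal{U}_{ad}(y; k\hbar) \neq \emptyset$ for every $y \in \mathbb{R}^n$ and every $k \in \mathbb{N}$.

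For arbitrary $l \in \mathbb{N}^+$, I would let $N$ be the smallest multiple of $\hbar$ with $N \ge l$ (so $N - l \in \{0,1,\dots,\hbar-1\}$), set $y_0 := e^{A(t_N-t_l)} x_0$, and pick $u' \in \mathcal{U}_{ad}(y_0; N)$ by the previous step. Then I would define $u$ by $u_j := 0$ for $j \le N$ and $u_j := u'_j$ for $j > N$. Since no control is applied at $t_{l+1}, \dots, t_N$, the solution formula at level $l$ gives $x(t_N^+; u, x_0, l) = e^{A(t_N-t_l)} x_0 = y_0$, so by the (level-$l$ to level-$N$) analogue of \eqref{yu-11-25-3},
\[
x(t_j; u, x_0, l) = x(t_j; u', y_0, N) \quad \text{for all } j > N,
\]
which puts the tail of the trajectory in $l^2$. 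The at most $\hbar - 1$ ``bridge'' values $x(t_j; u, x_0, l) = e^{A(t_j - t_l)} x_0$ for $l < j \le N$ contribute only boundedly, so the full sequence lies in $l^2$ and $u \in \mathcal{U}_{ad}(x_0; l)$.

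The bound $V(x_0; l) \le J(u; x_0, l) < +\infty$ is then immediate: since $\mathcal{Q}$ and $\mathcal{R}$ are $\hbar$-periodic they take only finitely many distinct values, so $\|Q_j\|$ and $\|R_j\|$ are uniformly bounded, and therefore $J(u; x_0, l)$ is controlled by a constant times $\sum_{j > l}\|x(t_j; u, x_0, l)\|^2 + \sum_{j > l}\|u_j\|^2 < +\infty$. There is no real obstacle here; the only delicate point is to verify that the index shifts $j \leftrightarrow j - k\hbar$ and the splitting at level $N$ interact correctly with $\vartheta$ and with the spacing $\{t_j\}$, which is exactly what \eqref{1.5201967} and the identity $\vartheta(k\hbar+j)=\vartheta(j)$ are designed to guarantee.
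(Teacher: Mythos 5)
Your proposal is correct and follows essentially the same route as the paper: reduce to the assumption $\mathcal{U}_{ad}(x_0;0)\neq\emptyset$ by an index shift of $N\hbar$ (the paper packages this as a surjective shift map $\mathcal{H}$, you construct the shifted control directly), then bridge from $t_l$ to the next multiple-of-$\hbar$ instant with zero controls and absorb the finitely many free-evolution values. No gaps; your explicit remark that the periodic $\mathcal{Q},\mathcal{R}$ are uniformly bounded just spells out why $V(x_0;l)<+\infty$, which the paper leaves implicit.
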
%For the non-periodic system, the claim is not true.
\begin{proof}
Arbitrarily fix $l\in\mathbb{N}^+$ and $x_0\in\mathbb{R}^n$. First of all,
we have  $\mathcal{U}_{ad}(x_0;0)\neq \emptyset\;\forall\; x_0\in\mathbb{R}^n$,
 because of   \eqref{assum-admmi} and
  \eqref{4.26,2.6}.
     We now claim
\begin{equation}\label{yu-3-12-1}
    \mathcal{U}_{ad}(x_0;N\hbar)\neq \emptyset \;\mbox{for any}\;x_0\in\mathbb{R}^n\;\mbox{and}\;N\in\mathbb{N}^+.
\end{equation}
  To this end, we arbitrarily fix  $N\in\mathbb{N}^+$.
  Define a map $\mathcal{H}: l^2(\mathbb{R}^m)\rightarrow l^2(\mathbb{R}^m)$ in the following manner: Given
  $u=(u_j)_{j\in \mathbb{N}^+}\in l^2(\mathbb{R}^m)$, let
  \begin{equation}\label{definitionofH,4.24}
     \mathcal{H}(u):=v=(v_j)_{j\in \mathbb{N}^+}\;\mbox{with}\; v_{j}=u_{j+N\hbar}\;\mbox{for all}\;j\in
     \mathbb{N}^+.
\end{equation}
   By \eqref{definitionofH,4.24}, \eqref{timeinstants} and \eqref{index1}, we can directly check that
  for each $u\in l^2(\mathbb{R}^m)$,
  \begin{eqnarray}\label{4.23-2.8}
  x(t_{j+N\hbar};{u},x_0,N\hbar)=x(t_j; \mathcal{H}(u),x_0,0)\;\;\mbox{for all}\;\;j\in \mathbb{N}^+.
  \end{eqnarray}
 By  \eqref{4.23-2.8} and \eqref{definition of new admis}, we can easily find
  \begin{eqnarray}\label{4.26,equiv.}
  u\in \mathcal{U}_{ad}(x_0;N\hbar)\Leftrightarrow \mathcal{H}(u)\in \mathcal{U}_{ad}(x_0;0).
  \end{eqnarray}
  Since $\mathcal{H}$ is surjective and $\mathcal{U}_{ad}(x_0;0)\neq\emptyset$, we get
  \eqref{yu-3-12-1} from \eqref{4.26,equiv.}.

\par

Next, we let
    $N=\left[{l}/{\hbar}\right]$ (which implies
    $N\hbar< l\leq(N+1)\hbar$).
    By (\ref{yu-3-12-1}), we can take
    \begin{eqnarray}\label{4.23-2.9}
    \hat{v}=\{\hat{v}_j\}_{j\in \mathbb{N}^+}\in\mathcal{U}_{ad}(e^{A(t_{(N+1)\hbar}-t_l)}x_0;(N+1)\hbar).
    \end{eqnarray}
    Define
   $\hat{u}=(\hat{u}_j)_{j\in\mathbb{N}^+}$ in the manner:
   $\hat{u}_j:=0$, when $1\leq j\leq (N+1)\hbar$; $\hat{u}_j:=\hat{v}_j$, when
   $j>(N+1)\hbar$.
   Then by  (\ref{yu-11-25-3}), we see
$$
    x(t_j;\hat{u},x_0,l)=
\begin{cases}
    e^{A(t_j-t_l)}x_0,&\mbox{if}\;l\leq j\leq (N+1)\hbar,\\
    x(t_j;\hat{v},e^{A(t_{(N+1)\hbar}-t_j)}x_0,(N+1)\hbar),&\mbox{if}\;j>(N+1)\hbar.
\end{cases}
$$
    This, along with \eqref{4.23-2.9}, yields $(x(t_j;\hat{u},x_0,l))_{j>l}\in l^2(\mathbb{R}^n)$ which implies
    $\hat{u}\in\mathcal{U}_{ad}(x_0;l)$.
    So $\mathcal{U}_{ad}(x_0;l)\neq \emptyset$, which, along with \eqref{4.24.2.6},
    shows that $V(x_0;l)<+\infty$.
    This ends the proof. \end{proof}

    By Lemma \ref{yu-lemma-3-12-1}, we see that  $V(x_0;l)<+\infty$ for all $l\in \mathbb{N}$
    and $x_0\in \mathbb{R}^n$.

\begin{lemma}\label{yu-lemma-3-13-1}
\begin{enumerate}
  \item[(i)] For each $l\in\mathbb{N}$, there is a symmetric and positive definite matrix
    $P_l\in \mathbb{R}^{n\times n}$ such that $V(x_0;l)=\langle P_lx_0,x_0\rangle$ for any
    $x_0\in\mathbb{R}^n$;
  \item[(ii)]  It holds that  $P_{l+\hbar}=P_l$ for all $l\in \mathbb{N}$.
\end{enumerate}
\end{lemma}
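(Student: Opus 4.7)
The plan is to obtain (i) by repeating, for each fixed $l\in\mathbb{N}$, the argument that produced the matrix $P$ for the case $l=0$ in the lemma that immediately precedes Lemma \ref{yu-lemma-3-12-1}, and then to derive (ii) by exhibiting a cost-preserving identification between the admissible sets $\mathcal{U}_{ad}(x_0;l)$ and $\mathcal{U}_{ad}(x_0;l+\hbar)$ that reflects the intrinsic $\hbar$-periodicity of the data.

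For (i), fix $l\in\mathbb{N}$. Lemma \ref{yu-lemma-3-12-1} gives $\mathcal{U}_{ad}(x_0;l)\neq\emptyset$, hence $V(x_0;l)<+\infty$ for every $x_0$. The facts that $V(\cdot;l)$ is continuous and satisfies the parallelogram law follow verbatim from the argument sketched in the paragraph preceding the first lemma of this section, since linearity of $x(\cdot;u,x_0,l)$ in the pair $(x_0,u)$ and the quadratic structure of $J(\cdot;x_0,l)$ are preserved under the time shift. Then \cite[Theorem 3]{b23} furnishes a symmetric matrix $P_l\in\mathbb{R}^{n\times n}$ with $V(x_0;l)=\langle P_l x_0,x_0\rangle$. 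Positive definiteness of $P_l$ is the extra point: since $x(t_{l+1};u,x_0,l)=e^{A(t_{l+1}-t_l)}x_0$ is independent of $u$ (the impulse at $t_{l+1}$ acts on the right-limit only, whereas $x(t_j;\cdot,\cdot,l)$ denotes the left-continuous value), one has
\[
J(u;x_0,l)\geq\langle Q_{l+1}e^{A(t_{l+1}-t_l)}x_0,e^{A(t_{l+1}-t_l)}x_0\rangle>0\quad\text{for all }x_0\neq 0,
\]
by positive definiteness of $Q_{l+1}$ and invertibility of $e^{A(t_{l+1}-t_l)}$; taking the infimum yields $\langle P_l x_0,x_0\rangle>0$.

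For (ii), it is enough to show $V(x_0;l)=V(x_0;l+\hbar)$ for every $x_0\in\mathbb{R}^n$, since this forces $P_l=P_{l+\hbar}$ by symmetry. The $\hbar$-periodicity of the problem data, namely $t_{l+\hbar+k}-t_{l+\hbar}=t_{l+k}-t_l$ from \eqref{timeinstants}, $B_{\vartheta(j+\hbar)}=B_{\vartheta(j)}$ from \eqref{index1}, and $Q_{j+\hbar}=Q_j$, $R_{j+\hbar}=R_j$ by definition of $\mathfrak{M}^{n}_{\hbar,+}$ and $\mathfrak{M}^{m}_{\hbar,+}$, motivates the right-shift map $\Phi$ modelled on the map $\mathcal{H}$ from \eqref{definitionofH,4.24}, defined by $(\Phi u)_j:=0$ for $j\leq\hbar$ and $(\Phi u)_j:=u_{j-\hbar}$ for $j>\hbar$. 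A direct computation based on the solution formula, together with the identity $\vartheta(j)=\vartheta(j-\hbar)$, yields $x(t_{l+\hbar+k};\Phi u,x_0,l+\hbar)=x(t_{l+k};u,x_0,l)$ for every $k\geq 1$. Re-indexing the cost sum via $j\mapsto j-\hbar$ and invoking the periodicity of $\mathcal{Q}$ and $\mathcal{R}$ then gives both the equivalence $\Phi u\in\mathcal{U}_{ad}(x_0;l+\hbar)\Longleftrightarrow u\in\mathcal{U}_{ad}(x_0;l)$ and the identity $J(\Phi u;x_0,l+\hbar)=J(u;x_0,l)$. The corresponding left-shift (a copy of $\mathcal{H}$ itself) produces the opposite inclusion between infima, so $V(x_0;l)=V(x_0;l+\hbar)$.

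The main obstacle is purely clerical: one must track three simultaneous shifts—the new initial time $t_{l+\hbar}$ against $t_l$, the $\vartheta$-index of the control matrices, and the tail index of the cost functional—so that $\Phi$ truly delivers a bijection between the admissibility classes rather than merely one inclusion. Once this index bookkeeping is handled via the identities $t_{l+\hbar+k}-t_{l+\hbar}=t_{l+k}-t_l$ and $\vartheta(j)=\vartheta(j-\hbar)$, both (i) and (ii) fall out without additional ingredients beyond \cite[Theorem 3]{b23}.
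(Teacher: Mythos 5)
Your proposal is correct and follows essentially the same route as the paper: part (i) via continuity and the parallelogram law plus \cite[Theorem 3]{b23}, and part (ii) via a shift map exploiting the $\hbar$-periodicity of $\Lambda_\hbar$, $\vartheta$, $\mathcal{Q}$ and $\mathcal{R}$ to identify $V(\cdot;l)$ with $V(\cdot;l+\hbar)$. The only cosmetic differences are that you supply an explicit positivity argument for $P_l$ (using $x(t_{l+1};u,x_0,l)=e^{A(t_{l+1}-t_l)}x_0$) where the paper leans on the cited theorem, and you use an explicit zero-padded right-shift where the paper invokes surjectivity of the left-shift $\mathcal{H}_\hbar$.
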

\begin{proof}
   \emph{The proof of the claim $(i)$.} One can use Lemma
   \ref{yu-lemma-3-12-1} to see that  for each $l\in \mathbb{N}$, $V(\cdot;l)$ is continuous and satisfies the
parallelogram law.
          Then the desired result follows from  \cite[Theorem 3]{b23}.

\vskip 5pt
    \emph{The proof of the claim $(ii)$.} By the claim $(i)$ of this lemma, we see that it suffices to  show that, for each $l\in\mathbb{N}$,
\begin{equation}\label{yu-3-13-7}
    V(x_0;l)=V(x_0;l+\hbar)\;\mbox{for any}\;x_0\in\mathbb{R}^n.
\end{equation}
We only show \eqref{yu-3-13-7} for the case that $l=0$, i.e.,
\begin{equation}\label{yu-2-26-2}
    V(x_0;0)=V(x_0;\hbar)\;\mbox{for any}\;x_0\in\mathbb{R}^n.
\end{equation}
   The general cases can be proved by the same way.
   To prove \eqref{yu-2-26-2}, we arbitrarily fix $x_0\in\mathbb{R}^n$.
   Define a map $\mathcal{H}_\hbar: l^2(\mathbb{R}^m)\rightarrow l^2(\mathbb{R}^m)$ by
   \begin{equation}\label{yu-2-27-1}
    \mathcal{H}_\hbar(u):=v\;\;\mbox{with}\;v_{j}=u_{j+\hbar}\;\mbox{for each}\;j\in \mathbb{N}^+.
\end{equation}
   Then $\mathcal{H}_\hbar$ is surjective.
      By \eqref{yu-2-27-1}, \eqref{timeinstants} and \eqref{index1}, we can directly check that for each $u\in
      l^2(\mathbb{R}^m)$,
      \begin{eqnarray}\label{4.24,2.15}
   x(t_{j+\hbar};u,x_0,\hbar)=x(t_j; \mathcal{H}_\hbar(u),x_0,0)\;\;\mbox{for all}\;\;j\in \mathbb{N}^+.
   \end{eqnarray}
  From \eqref{yu-2-27-1}, \eqref{4.24,2.15} and \eqref{definition of new admis}, we find
  \begin{eqnarray}\label{4.24,2.17}
  \mathcal{H}_\hbar(u)\in\mathcal{U}_{ad}(x_0;0) \Leftrightarrow u\in\mathcal{U}_{ad}(x_0;\hbar).
  \end{eqnarray}
  Since $Q_{j+\hbar}=Q_j$ and
    $R_{j+\hbar}=R_j$ for each $j\in\mathbb{N}^+$, we see from
   \eqref{4.24,2.17} and \eqref{4.24,2.15} that
  \begin{eqnarray}\label{4.24,2.18}
  J(u;x_0,\hbar)=J(\mathcal{H}_\hbar(u);x_0,0), \;\mbox{when}\;\; u\in\mathcal{U}_{ad}(x_0;\hbar).
  \end{eqnarray}
  By  \eqref{4.24,2.17} and  \eqref{4.24,2.18}, we find
  \begin{eqnarray*}
  V(x_0;0)\leq J(\mathcal{H}_\hbar(u);x_0,0)=J(u;x_0,\hbar) \;\mbox{for each}\;\;
  u\in\mathcal{U}_{ad}(x_0;\hbar),
  \end{eqnarray*}
  which, together with
  \eqref{4.24.2.6}, leads to
     \begin{equation}\label{yu-2-27-2}
    V(x_0;0)\leq V(x_0;\hbar).
\end{equation}

    We next show the reverse of \eqref{yu-2-27-2}.
    By \eqref{4.24.2.6}, we can find, for
    each $\varepsilon>0$, a control $v_\varepsilon\in \mathcal{U}(x_0;0)$
        so that
     \begin{eqnarray}\label{4.25,2.22}
      V(x_0;0)+\varepsilon\geq J(v_\varepsilon;x_0,0).
    \end{eqnarray}
        Since $\mathcal{H}_\hbar$ is surjective, there is $u_\varepsilon\in l^2(\mathbb{R}^m)$ so that
      $\mathcal{H}_{\hbar}(u_\varepsilon)=v_\varepsilon$.
      This, along with \eqref{4.24,2.17}, leads to
       \begin{eqnarray}\label{4.25,2.21}
     u_\varepsilon\in \mathcal{U}_{ad}(x_0;\hbar).
    \end{eqnarray}
      From \eqref{4.25,2.21} and \eqref{4.24,2.18}, we find that
        $J(u_\varepsilon;x_0,\hbar)=J(v_\varepsilon;x_0, 0)$.
               This, together with \eqref{4.25,2.22} and \eqref{4.24.2.6}, yields
        $$
    V(x_0;0)+\varepsilon\geq J(v_\varepsilon;x_0,0)
    =J(u_\varepsilon;x_0,\hbar)\geq V(x_0;\hbar).
$$
   Sending  $\varepsilon\rightarrow 0$ in the above gives $V(x;0)\geq V(x;\hbar)$, which, together with
   (\ref{yu-2-27-2}), leads to (\ref{yu-2-26-2}).
   This ends the proof.
\end{proof}

The next Lemma \ref{yu-lemma-11-25-3} gives a discrete dynamic programming principle
associated to \textbf{$(\mbox{I-I-LQ})_{l}$}.
Throughout this lemma and its proof, we will use the notations:
 For each $v\in l^2(\mathbb{R}^m)$, $x_0\in \mathbb{R}^n$, $l\in \mathbb{N}$, we let
\begin{eqnarray}\label{notation4.25,2.25}
 q(j;v,x_0,l):=\langle Q_jx(t_j;v,x_0,l),x(t_j;v,x_0,l)\rangle+\langle R_jv_j,v_j\rangle,\; j=l+1, l+2,\dots.
\end{eqnarray}
For each  $w=(w_1,\cdots,w_k)$
(with $w_j\in \mathbb{R}^{m}$, $k\in \mathbb{N}^+$) and each $(v_j)_{j\in \mathbb{N}^+}\in
l^2(\mathbb{R}^m)$, we write
\begin{eqnarray}\label{426,2.27notation1}
w \odot v:=(w_1,\cdots,w_k, v_1,v_2,\cdots).
\end{eqnarray}
For each $v=(v_j)_{j\in \mathbb{N}^+}\in l^2(\mathbb{R}^m)$ and each $k\in \mathbb{N}^+$, we let
\begin{eqnarray}\label{426,2.27notation2}
E_k(v):=(v_j)_{j=1}^k\in \mathbb{R}^{m\times k}\;\;\mbox{and}\;\; G_k(v):=(v_{k+j})_{j\in \mathbb{N}^+}\in
l^2(\mathbb{R}^m).
\end{eqnarray}

\begin{lemma}\label{yu-lemma-11-25-3}
With notations in \eqref{notation4.25,2.25} and \eqref{426,2.27notation1}, it holds that for each  $l\in\mathbb{N}$ and each $k>l$,
\begin{equation*}
    V(x_0;l)=\inf_{w\in \mathbb{R}^{m\times k}}\Big\{\sum_{j=l+1}^kq(j;  w\odot 0, x_0,l)+V(x(t_k^+; w\odot
    0,x_0,l);k)\Big\}\;\;\mbox{for each}\;\;x_0\in \mathbb{R}^n.
\end{equation*}
Here, $0$ is the origin of $l^2(\mathbb{R}^m)$.
    \end{lemma}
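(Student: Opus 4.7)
The plan is a standard dynamic programming argument. The central tool is the \emph{cost decomposition}: for every $u\in\mathcal{U}_{ad}(x_0;l)$,
\begin{equation*}
J(u;x_0,l)=\sum_{j=l+1}^{k}q(j;u,x_0,l)+J(u;x(t_k^+;u,x_0,l),k).
\end{equation*}
This follows by splitting the series defining $J(u;x_0,l)$ at $j=k+1$ and invoking the semigroup identity \eqref{yu-11-25-3}, which gives $x(t_j;u,x_0,l)=x(t_j;u,x(t_k^+;u,x_0,l),k)$ for all $j>k$, so that $q(j;u,x_0,l)=q(j;u,x(t_k^+;u,x_0,l),k)$ for $j>k$ and the tail sum becomes $J(u;x(t_k^+;u,x_0,l),k)$. (No periodicity of $\mathcal{Q},\mathcal{R}$ is needed here; only \eqref{yu-11-25-3} is used.) The same identity, together with the fact that finite prefixes automatically lie in $l^2$, yields the admissibility equivalence: $u\in\mathcal{U}_{ad}(x_0;l)$ iff $u\in\mathcal{U}_{ad}(x(t_k^+;u,x_0,l);k)$.

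For the ``$\geq$'' inequality, I fix $u\in\mathcal{U}_{ad}(x_0;l)$ and set $w:=E_k(u)\in\mathbb{R}^{m\times k}$. Since $u=w\odot G_k(u)$ agrees with $w\odot 0$ on the first $k$ indices, both sequences produce the same state $x(t_j;\cdot,x_0,l)$ for $j\leq k$ and the same running cost on that range, so the cost decomposition reads
\begin{equation*}
J(u;x_0,l)=\sum_{j=l+1}^{k}q(j;w\odot 0,x_0,l)+J(u;x(t_k^+;w\odot 0,x_0,l),k).
\end{equation*}
By the admissibility equivalence the control $u$ also lies in $\mathcal{U}_{ad}(x(t_k^+;w\odot 0,x_0,l);k)$, hence its cost there is at least $V(x(t_k^+;w\odot 0,x_0,l);k)$. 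Infimizing over $u\in\mathcal{U}_{ad}(x_0;l)$ on the left and bounding the right side below by $\inf_{w\in\mathbb{R}^{m\times k}}\{\cdots\}$ yields the ``$\geq$'' direction.

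For the ``$\leq$'' direction I fix $w\in\mathbb{R}^{m\times k}$ and $\varepsilon>0$, and choose $v_\varepsilon\in\mathcal{U}_{ad}(x(t_k^+;w\odot 0,x_0,l);k)$ with $J(v_\varepsilon;x(t_k^+;w\odot 0,x_0,l),k)\leq V(x(t_k^+;w\odot 0,x_0,l);k)+\varepsilon$. Setting $u_\varepsilon:=w\odot G_k(v_\varepsilon)$, one checks that $u_\varepsilon\in\mathcal{U}_{ad}(x_0;l)$ via the admissibility equivalence; that the running cost on $l<j\leq k$ equals $q(j;w\odot 0,x_0,l)$ (since $(u_\varepsilon)_j=w_j$ there and the state on that range is determined by $w$); and that $J(u_\varepsilon;x(t_k^+;w\odot 0,x_0,l),k)=J(v_\varepsilon;x(t_k^+;w\odot 0,x_0,l),k)$ (since $(u_\varepsilon)_j=v_{\varepsilon,j}$ for $j>k$). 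Applying the cost decomposition then gives
\begin{equation*}
V(x_0;l)\leq J(u_\varepsilon;x_0,l)\leq\sum_{j=l+1}^{k}q(j;w\odot 0,x_0,l)+V(x(t_k^+;w\odot 0,x_0,l);k)+\varepsilon,
\end{equation*}
and letting $\varepsilon\to 0$ and then infimizing over $w$ completes the proof. The only genuinely delicate point is the index bookkeeping when translating between ``controls starting at time $l$'' and ``controls starting at time $k$'', which is handled transparently by the combinator $\odot$ from \eqref{426,2.27notation1} and the splitters $E_k,G_k$ from \eqref{426,2.27notation2}, together with the semigroup identity \eqref{yu-11-25-3}.
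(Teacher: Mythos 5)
Your proposal is correct and follows essentially the same route as the paper's proof: both directions rest on the cost decomposition at index $k$, the concatenation $w\odot G_k(v)$ to build admissible controls (which is the paper's claim \eqref{4.28,2.35} and, implicitly, the surjectivity statement $E_k(\mathcal{U}_{ad}(x_0;l))=\mathbb{R}^{m\times k}$ in \eqref{yu-3-14-5}), and $\varepsilon$-optimal controls for one of the two inequalities. The only differences are cosmetic --- you use the $\varepsilon$-argument for ``$\leq$'' and direct infimization for ``$\geq$'', whereas the paper does the reverse --- so no further comparison is needed.
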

    \begin{proof}
    Arbitrarily fix  $x_0\in \mathbb{R}^n$, $l$ and $k$ with $k>l$.
    By Lemma \ref{yu-lemma-3-12-1}, we have $\mathcal{U}_{ad}(x_0;l)\neq \emptyset$.
        We organize the rest of the proof by two steps.

    \noindent {\emph{Step 1.} We prove
    \begin{equation}\label{yu-11-25-14}
    V(x_0;l)\leq \inf_{w\in \mathbb{R}^{m\times k}}\Big\{\sum_{j=l+1}^kq(j;w\odot 0,x_0,l)+V(x(t_k^+;w\odot
    0,x_0,l);k)\Big\}.
\end{equation}     }

To show \eqref{yu-11-25-14}, it suffices to prove
\begin{equation}\label{yu-11-25-13}
    V(x_0;l)\leq \sum_{j=l+1}^kq(j;E_k(v)\odot 0,x_0,l)+V(x(t_k^+;E_k(v)\odot 0,x_0,l);k)
    \;\forall\; v\in E_k(\mathcal{U}_{ad}(x_0;l))
\end{equation}
and
\begin{equation}\label{yu-3-14-5}
    E_k(\mathcal{U}_{ad}(x_0;l))=\mathbb{R}^{m\times k}.
\end{equation}

We first show \eqref{yu-11-25-13}. Arbitrarily fix
    $v\in \mathcal{U}_{ad}(x;l)$.
    We can directly check the following facts:
\begin{equation}\label{yu-11-25-8}
    V(x_0;l)\leq \sum_{j=l+1}^kq(j;v,x_0,l)+\sum_{j=k+1}^{+\infty}q(j;v,x_0,l);
\end{equation}
\begin{equation}\label{yu-11-25-10}
    \sum_{j=k+1}^{+\infty}q(j;v,x,l)=J(v;x(t^+_k;v,x,l),k)\;\;\mbox{for each}\;\; j\geq k+1;
\end{equation}
   \begin{equation}\label{yu-11-25-11}
    J(E_k(v)\odot G_k(v);x,k)=J(\hat{v}^k\odot G_k(v);x,k)\;\;\mbox{for any}\;\; \hat{v}^k\in
    \mathbb{R}^{m\times k}.
\end{equation}
We now claim
   \begin{eqnarray}\label{4.28,2.35}
   E_k(v)\odot G_k(u)\in \mathcal{U}_{ad}(x;l),\;\;\mbox{when }\;\;u\in\mathcal{U}_{ad}(x(t_k^+;v,x,l);k).
   \end{eqnarray}
  To this end, we arbitrarily fix $u\in\mathcal{U}_{ad}(x(t_k^+;v,x,l);k)$. Then from  \eqref{426,2.27notation2}, we have
      \begin{eqnarray*}
    x(t_j;E_k(v)\odot G_k(u),x_0,l)=
    \begin{cases}
    x(t_j;E_k(v)\odot 0,x_0,l), &\mbox{if}\; l+1\leq j\leq k,\\
    e^{A(t_{j}-t_{j-1})}x(t_{j-1}^+;u,x(t_k^+;v,x_0,l),t_k), &\mbox{if}\; j> k.
    \end{cases}
   \end{eqnarray*}
   Meanwhile, we  can directly check
      \begin{eqnarray*}
   (x(t_{j-1}^+;u,x(t_k^+;v,x_0,l),t_k))_{j> k}
   =(e^{-A(t_{j}-t_{j-1})}x(t_{j}; u,x(t_k^+;v,x_0,l),t_k))_{j> k}.
   \end{eqnarray*}
  From these, \eqref{4.28,2.35} follows.

   By \eqref{4.28,2.35},
    (\ref{yu-11-25-10}), (\ref{yu-11-25-11}) and by (\ref{yu-11-25-8})
    (where  $v$ is replaced by $E_k(v)\odot G_k(u)$),
   we find
$$
    V(x_0;l)\leq \sum_{j=l+1}^kq(j;E_k(v)\odot 0,x_0,l)+J(u;x(t_k^+;E_k(v)\odot 0,x_0,l),k)
    \;\forall\; u\in\mathcal{U}_{ad}(x(t_k^+;v,x_0,l);k),
$$
    which leads to \eqref{yu-11-25-13}.  Here, we have used the facts:
$$
    x(t_k^+;E_k(v)\odot G_k(u),x_0,l)=x(t_k^+;E_k(v)\odot 0,x_0,l)
$$
    and
$$
    \sum_{j=l+1}^kq(j;E_k(v)\odot G_k(u),x_0,l)=\sum_{j=l+1}^kq(j;E_k(v)\odot 0,x_0,l).
$$

    We next show \eqref{yu-3-14-5}. In fact, it follows  by \eqref{426,2.27notation2} that
    $E_k(\mathcal{U}_{ad}(x_0;l))\subset\mathbb{R}^{m\times k}$.
    Conversely, for each $w=(w_1,\cdots,w_{k})\in\mathbb{R}^{m\times k}$,
    we take  $\hat u\in\mathcal{U}_{ad}(x(t_{k}^+;w\odot 0,x_0,l);k)$.
     Let  $\hat w:=w\odot \hat u$. Then by \eqref{4.28,2.35} and \eqref{426,2.27notation2}, we find
     $\hat w\in \mathcal{U}_{ad}(x_0;l)$ and  $E_k(\hat w)=w$. Hence, $\mathbb{R}^{m\times k}\subset
     E_k(\mathcal{U}_{ad}(x_0;l))$. So \eqref{yu-3-14-5} is true.

\par
\noindent {\emph{Step 2.} We prove
\begin{equation}\label{4.29.2.36}
    V(x_0;l)\geq \inf_{w\in \mathbb{R}^{m\times k}}\Big\{\sum_{j=l+1}^kq(j;w\odot 0,x_0,l)+V(x(t_k^+;w\odot
    0,x_0,l);k)\Big\}.
\end{equation} }

It follows by \eqref{4.24.2.6}, \eqref{yu-11-25-1} and \eqref{notation4.25,2.25}   that  for each
$\varepsilon>0$, there is  $v^\varepsilon\in \mathcal{U}_{ad}(x_0;l)$ such that
\begin{eqnarray}\label{yu-11-25-15}
    V(x_0;l)+\varepsilon&\geq& \sum_{j=l+1}^kq(j;v^\varepsilon,x_0,l)+\sum_{j=k+1}^{+\infty}
    q(j;v^\varepsilon,x_0,l)\nonumber\\
        &\geq&\sum_{j=l+1}^kq(j;E_k(v^\varepsilon)\odot 0,x_0,l)+V(x(t_k^+;E_k(v^\varepsilon)\odot
        0,x_0,l);k).
\end{eqnarray}
    Here, we have used facts:
    \begin{equation*}
    x(t_k^+;v^\varepsilon,x_0,l)=x(t_k^+;E_k(v^\varepsilon)\odot 0,x_0,l);\;
    \;\sum_{j=l+1}^kq(j;v^\varepsilon,x_0,l)=\sum_{j=l+1}^kq(j;E_k(v^\varepsilon)\odot 0,x_0,l);
\end{equation*}
and
\begin{equation*}
v^\varepsilon\in \mathcal{U}_{ad}(x(t_k^+, E_k(v^\varepsilon)\odot 0, x_0,l); k).
\end{equation*}
(The last fact above holds, since  $v^\varepsilon\in \mathcal{U}_{ad}(x_0;l)$.)
    From   \eqref{yu-11-25-15} and  (\ref{yu-3-14-5}), we see
\begin{equation*}\label{yu-11-25-16}
    V(x_0;l)+\varepsilon\geq \inf_{w\in \mathbb{R}^{m\times k}}\Big\{\sum_{j=l+1}^kq(j;w\odot
    0,x_0,l)+V(x(t_k^+;w\odot 0,x_0,l);k)\Big\}.
\end{equation*}
    Letting $\varepsilon\to 0^+$ in the above leads to \eqref{4.29.2.36}.

    Thus, by (\ref{yu-11-25-14}) and (\ref{4.29.2.36}), we end the proof.
    \end{proof}

\subsection{Proof of Theorem \ref{yu-theorem-11-27-1}}

Arbitrarily fix $(A,\{B_k\}_{k=1}^\hbar,\Lambda_{\hbar})$. First of all, we give the following two statements:
\begin{enumerate}
\item[$(iii)'$] For any $\mathcal{Q}\in \mathfrak{M}^{n}_{\hbar,+}$
and $\mathcal{R}\in \mathfrak{M}^{m}_{\hbar,+}$, the equation (\ref{yu-11-26-1}) has a solution
$\{P_{k}\}_{k=0}^{\hbar}$.
  \item[$(iv)'$] There is $\mathcal{Q}\in \mathfrak{M}^{n}_{\hbar,+}$
and $\mathcal{R}\in \mathfrak{M}^{m}_{\hbar,+}$ so that the equation (\ref{yu-11-26-1}) has a solution
$\{P_{k}\}_{k=0}^{\hbar}$.
\end{enumerate}
We will prove $(i)\Rightarrow (ii)\Rightarrow (iii)'\Rightarrow (iv)'\Rightarrow (i)$
and $(iii)'\Rightarrow (iii)$. When these are done, we finish the proof of Theorem \ref{yu-theorem-11-27-1}, since it is clear that $(iii)\Rightarrow (iv)$ and $(iv)\Rightarrow (iv)'$.

We organize the proof by several steps.

    \vskip 5pt

    \noindent {\it Step 1. We prove $(i)\Rightarrow(ii)$.}

     Suppose that $(i)$ is true. Then there is   $\mathcal{F}:=\{F_k\}_{k=1}^{\hbar}\subset\mathbb{R}^{m\times n}$
     so that \eqref{stbleexpression} is true. Arbitrarily fix $x_0\in \mathbb{R}^n$ and $\mathcal{F}$ so that \eqref{stbleexpression} holds.
     Write $x_{\mathcal{F}}(\cdot; x_0)$ for the solution to the equation \eqref{yu-10-24-4} with the initial
     condition:
     $x(0)=x_0$. Take control  $u:=(u_{j})_{j\in\mathbb{N}^+}$ with $u_j=F_{\vartheta(j)}x_{\mathcal{F}}(t_j; x_0)$, $j\in
     \mathbb{N}^+$. Then we have
     $x(t; u,x_0)=x_{\mathcal{F}}(t; x_0)$ for $t\geq 0$. This, along with \eqref{stbleexpression}, indicates
     that $(x(t_j;u,x_0))_{j\in\mathbb{N}^+}\in l^2(\mathbb{R}^n)$
     and $u\in l^2(\mathbb{R}^m)$. Thus, $\mathcal{U}_{ad}(x_0)\neq \emptyset$. So $(ii)$ holds.

\vskip 5pt

\noindent {\emph{Step 2. We prove $(ii)\Rightarrow(iii)'$.}}

Suppose that $(ii)$ is true. Arbitrarily fix $\mathcal{Q}\in\mathfrak{M}_{\hbar,+}^{n}$ and
$\mathcal{R}\in\mathfrak{M}_{\hbar,+}^{m}$. Let $\mathcal{P}=(P_{l})_{l\in\mathbb{N}}$
be given by  Lemma \ref{yu-lemma-3-13-1}. (Notice that Lemma \ref{yu-lemma-3-13-1} needs the assumption
\eqref{assum-admmi} which is exactly $(ii)$.)
We will show that  $\{P_k\}_{k=0}^\hbar$ is a solution to  the  equation  (\ref{yu-11-26-1}).

 First, we  show that
   $\{P_k\}_{k=0}^\hbar$ satisfies the first  equation in  (\ref{yu-11-26-1}), i.e.,  for each $0\leq l<\hbar$,
\begin{equation}\label{yu-3-14-8}
    e^{-A^\top(t_{l+1}-t_l)}P_le^{-A(t_{l+1}-t_l)}-P_{l+1}
    =Q_{l+1}-P_{l+1}B_{l+1}(R_{l+1}+B_{l+1}^\top P_{l+1}B_{l+1})^{-1}B_{l+1}^\top P_{l+1}.
\end{equation}
To this end, we arbitrarily fix $x_0\in\mathbb{R}^n$ and $0\leq l<\hbar$.
It follows by  Lemma \ref{yu-lemma-3-13-1} and Lemma \ref{yu-lemma-11-25-3} that for any $v=(v_1,\cdots,v_{l+1})$ (with $v_j\in\mathbb{R}^{m}$ for all $j$),
\begin{eqnarray}\label{yu-11-29-5-b}
    &\;&\langle P_lx_0,x_0\rangle-\langle P_{l+1} x(t_{l+1}^+;v\odot 0,x_0,l),x(t_{l+1}^+;v\odot
    0,x_0,l)\rangle\nonumber\\
    &\leq&\langle Q_{l+1}x(t_{l+1};v\odot 0,x_0,l),x(t_{l+1};v\odot 0,x_0,l)\rangle
+\langle R_{l+1}v_{l+1},v_{l+1}\rangle.
\end{eqnarray}
	(Here $0$ is the origin of $l^2(\mathbb{R}^m)$.)
 Meanwhile, one can directly check that for any $v=(v_1,\cdots,v_{l+1})$ (with $v_j\in\mathbb{R}^{m}$ for all $j$),
\begin{eqnarray}\label{3.392019616}
	&\;&\langle P_{l+1} x(t_{l+1}^+;v\odot 0,x_0,l),x(t_{l+1}^+;v\odot 0,x_0,l)\rangle\nonumber\\
	&=&\langle e^{A^\top(t_{l+1}-t_l)}P_{l+1} e^{A(t_{l+1}-t_l)}x_0,x_0\rangle
	+2\langle B_{l+1}^\top P_{l+1} e^{A(t_{l+1}-t_l)}x_0,v_{l+1}\rangle\nonumber\\
	&\;&+\langle B_{l+1}^\top P_{l+1} B_{l+1}v_{l+1},v_{l+1}\rangle
\end{eqnarray}
and
   \begin{equation}\label{3.402019616}
	\langle Q_{l+1}x(t_{l+1};v\odot 0,x_0,l),x(t_{l+1};v\odot 0,x_0,l)\rangle=\langle
e^{A^\top(t_{l+1}-t_l)}Q_{l+1}e^{A(t_{l+1}-t_l)}x_0,x_0\rangle.
\end{equation}
	These, together with (\ref{yu-11-29-5-b}), imply that for any $v=(v_1,\cdots,v_{l+1})$
(with $v_j\in\mathbb{R}^{m}$ for all $j$),
\begin{eqnarray*}
	&\;&\left\langle\left[ P_l-e^{A^\top(t_{l+1}-t_l)}\left(P_{l+1}+Q_{l+1}\right)
    e^{A(t_{l+1}-t_l)}\right]x_0,x_0\right\rangle\nonumber\\
	&\leq&\left\langle\left(R_{l+1}+B_{l+1}^\top P_{l+1}B_{l+1}\right)v_{l+1},v_{l+1}\right\rangle
	+2\langle B_{l+1}^\top P_{l+1} e^{A(t_{l+1}-t_l)}x_0,v_{l+1}
    \rangle\nonumber\\
    &=&\left\|\left(R_{l+1}+B_{l+1}^\top P_{l+1}B_{l+1}\right)^{\frac{1}{2}}\left[v_{l+1}
    +\left(R_{l+1}+B_{l+1}^\top P_{l+1}B_{l+1}\right)^{-1}
    B_{l+1}^\top P_{l+1}e^{A(t_{l+1}-t_l)}x_0\right]\right\|^2_{\mathbb{R}^n}\nonumber\\
	&\;&-\left\langle e^{A^\top(t_{l+1}-t_l)}P_{l+1} B_{l+1}\left(R_{l+1}+B_{l+1}^\top P_{l+1}
B_{l+1}\right)^{-1}B_{l+1}^\top P_{l+1} e^{A(t_{l+1}-t_l)}x_0,x_0\right\rangle.
\end{eqnarray*}
	Letting $v_{l+1}:=-\left(R_{l+1}+B_{l+1}^\top P_{l+1}B_{l+1}\right)^{-1}B_{l+1}^\top
P_{l+1}e^{A(t_{l+1}-t_l)}x_0$ in the above leads to
\begin{eqnarray}\label{yu-11-30-4-b}
	&\;&\left\langle\left[ P_l-e^{A^\top(t_{l+1}-t_l)}\left(P_{l+1}
    +Q_{l+1}\right)e^{A(t_{l+1}-t_l)}\right]x_0,x_0\right\rangle\nonumber\\
	&\leq&-\left\langle e^{A^\top(t_{l+1}-t_l)}P_{l+1} B_{l+1}\left(
     R_{l+1}+B_{l+1}^\top P_{l+1} B_{l+1}\right)^{-1}B_{l+1}^\top P_{l+1}
     e^{A(t_{l+1}-t_l)}x_0,x_0\right\rangle.
\end{eqnarray}
	
On the other hand, by Lemmas \ref{yu-lemma-3-13-1}, \ref{yu-lemma-11-25-3},
  for each $\varepsilon>0$, there is   $v^\varepsilon\in \mathbb{R}^{m\times(l+1)}$ so that
\begin{eqnarray*}\label{yu-h-11-30-5}
	&\;&\langle P_lx_0,x_0\rangle-\langle P_{l+1} x(t_{l+1}^+;v^\varepsilon\odot
0,x,l),x(t_{l+1}^+;v^\varepsilon\odot 0,x_0,l)\rangle
    +\varepsilon\nonumber\\
        &\geq& \langle Q_{l+1} x(t_{l+1};v^\varepsilon\odot 0,x_0,l),x(t_{l+1};v^\varepsilon\odot
        0,x_0,l)\rangle+\langle R_{l+1}v^\varepsilon_{l+1},v^\varepsilon_{l+1}\rangle.
\end{eqnarray*}
	This, along with \eqref{3.392019616} and \eqref{3.402019616} (where $v=v^\varepsilon$), yields
  \begin{eqnarray*}\label{yu-h-11-30-7}	
&\;&\left\langle\left[P_l-e^{A^\top(t_{l+1}-t_l)}\left(P_{l+1}+Q_{l+1}\right)
    e^{A(t_{l+1}-t_l)}\right]x_0,x_0\right\rangle+\varepsilon\nonumber\\
	&\geq&-\left\langle e^{A^\top(t_{l+1}-t_l)}P_{l+1} B_{l+1}\left(R_{l+1}+B_{l+1}^\top P_{l+1}
B_{l+1}\right)^{-1}B_{l+1}^\top P_{l+1} e^{A(t_{l+1}-t_l)}x_0,x_0\right\rangle.
\end{eqnarray*}
Sending 	$\varepsilon\rightarrow 0$ in the above, then combining (\ref{yu-11-30-4-b}), we obtain
(\ref{yu-3-14-8}) by the arbitrariness of $x_0$.

Besides, by the conclusion $(ii)$ in Lemma \ref{yu-lemma-3-13-1}, we see that $P_0=P_\hbar$, i.e.,
$\{P_k\}_{k=0}^\hbar$ satisfies the second  equation in  (\ref{yu-11-26-1}). So
$\{P_k\}_{k=0}^\hbar$ is a solution to  the  equation  (\ref{yu-11-26-1}).

\vskip 5pt

\noindent {\it Step 3. It is trivial that  $(iii)'\Rightarrow (iv)'$.}

\vskip 5pt

\noindent {\it Step 4. We prove that $(iv)'\Rightarrow (i)$.}

Suppose that  $\mathcal{Q}$, $\mathcal{R}$ and
$\{P_{k}\}_{k=0}^{\hbar}$ are given by $(iv)'$.
 Then we can find positive constants $C_{\min}$, $C_{\max}$ and $\widehat{C}$ so that
\begin{equation}\label{yu-5-11-1}
    C_{\min}\mathbb{I}_n\leq P_k\leq C_{\max}\mathbb{I}_n\;\forall\; k\in\{0,1,\ldots,\hbar\}
    \;\;\mbox{and}\;\;Q_j\geq \widehat{C}\mathbb{I}_n\;\forall\; j\in\mathbb{N}^+.
\end{equation}
(\eqref{yu-5-11-1} will be used later.)
      Let $\mathcal{F}:=\{F_k\}_{k=1}^\hbar$ be the corresponding feedback law given
    by (\ref{yu-12-4-1-b}).

    We claim that the corresponding closed-loop system (\ref{yu-10-24-4}) is stable.
    For this purpose, we arbitrarily fix a solution $x_\mathcal{F}(\cdot)$   to (\ref{yu-10-24-4}). Our aim is to show that it satisfies \eqref{stbleexpression}. The proof is divided by two parts.
    \vskip 5pt

   \noindent  {\it Part 4.1. We prove that for some $\mu>0$,
    \begin{equation}\label{yu-5-9-10}
    \langle P_{\hbar}x_{\mathcal{F}}(t^+_{\hbar}),x_{\mathcal{F}}(t^+_{\hbar})\rangle \leq e^{-\mu
    t_{\hbar}}\langle P_{\hbar}x_{\mathcal{F}}(0),x_{\mathcal{F}}(0)\rangle.
\end{equation}}

    Since
    $$
    x_{\mathcal{F}}(t_k^+)=e^{A(t_k-t_{k-1})}x_{\mathcal{F}}(t^+_{k-1})+B_kF_k
    e^{A(t_k-t_{k-1})}x_{\mathcal{F}}(t_{k-1}^+)\;\forall\; k\in \{1,2,\ldots,\hbar\},
    $$
    one can directly see from (\ref{yu-11-26-1}) and (\ref{yu-12-4-1-b}) that
    for each $k\in \{1,2,\ldots,\hbar\}$,
    \begin{eqnarray}\label{2.45201967}
    &\;&\langle P_kx_{\mathcal{F}}(t_k^+),x_{\mathcal{F}}(t_k^+)\rangle\nonumber\\
        &=&\langle e^{A^\top (t_k-t_{k-1})}P_ke^{A(t_k-t_{k-1})}x_{\mathcal{F}}(t_{k-1}^+),
    x_{\mathcal{F}}(t_{k-1}^+)\rangle\nonumber\\
    &\;&-\langle e^{A^\top (t_k-t_{k-1})}P_kB_k(R_k+B_k^\top P_kB_k)^{-1}B_k^\top P_ke^{A(t_k-t_{k-1})}
    x_{\mathcal{F}}(t_{k-1}^+), x_{\mathcal{F}}(t_{k-1}^+)\rangle\nonumber\\
    &\;&-\langle R_kF_ke^{A(t_k-t_{k-1})}x_{\mathcal{F}}(t_{k-1}^+),
    F_ke^{A(t_k-t_{k-1})}x_{\mathcal{F}}(t_{k-1}^+)\rangle\nonumber\\
    &\leq&\langle P_{k-1}x_{\mathcal{F}}(t_{k-1}^+),x_{\mathcal{F}}(t_{k-1}^+)\rangle-\langle
    Q_ke^{A(t_k-t_{k-1})}x_{\mathcal{F}}(t^+_{k-1}),
    e^{A(t_k-t_{k-1})}x_{\mathcal{F}}(t^+_{k-1})\rangle.
\end{eqnarray}
Meanwhile, by  (\ref{yu-5-11-1}), we find  that
    for each $k\in \{1,2,\ldots,\hbar\}$,
\begin{eqnarray}\label{2.46201967}
    &\;&\langle Q_ke^{A(t_k-t_{k-1})}x_{\mathcal{F}}(t_{k-1}^+),e^{A(t_k-t_{k-1})}x_{\mathcal{F}}(t_{k-1}^+)\rangle\nonumber\\
    &\geq& \widehat{C}C_{\max}^{-1}\Big[\sup_{s\in[0,t_{\hbar}]}\|e^{-As}\|_{\mathcal{L}(\mathbb{R}^n)}\Big]^{-2}
    \langle P_{k-1}x_{\mathcal{F}}(t_{k-1}^+),x_{\mathcal{F}}(t_{k-1}^+)\rangle.
\end{eqnarray}
Now by letting $\rho:=1-\widehat{C}C_{\max}^{-1}\Big[\sup_{s\in[0,t_{\hbar}]}\|e^{-As}\|_{\mathcal{L}(\mathbb{R}^n)}\Big]^{-2}$, we obtain from  \eqref{2.45201967} and \eqref{2.46201967} that
    \begin{eqnarray}\label{yu-5-10-1}
    \langle P_kx_{\mathcal{F}}(t_k^+),x_{\mathcal{F}}(t_k^+)\rangle\leq \rho
    \langle P_{k-1}x_{\mathcal{F}}(t_{k-1}^+),x_{\mathcal{F}}(t_{k-1}^+)\rangle \;\forall\; k\in \{1,2,\ldots,\hbar\},
\end{eqnarray}
    which leads to
\begin{eqnarray}\label{2.48201967}
    \langle P_{\hbar}x_{\mathcal{F}}(t_{\hbar}^+),x_{\mathcal{F}}(t_{\hbar}^+)\rangle
    \leq \rho^{\hbar}\langle P_0x_{\mathcal{F}}(0),x_{\mathcal{F}}(0)\rangle=\rho^{\hbar}\langle P_\hbar
    x_{\mathcal{F}}(0),x_{\mathcal{F}}(0)\rangle.
\end{eqnarray}

  Notice that $0\leq\rho<1$. (This follows from  \eqref{yu-5-10-1}.)
  In the case that  $\rho=0$, we see from \eqref{2.48201967} that
   (\ref{yu-5-9-10}) holds for any $\mu>0$. In the case when $\rho\in(0,1)$,
   we see from \eqref{2.48201967} that
   (\ref{yu-5-9-10}) holds for $\mu=-\frac{\hbar}{t_{\hbar}}\ln\rho$.
   Hence, \eqref{yu-5-9-10} has been proved.

   \vskip 5pt
   \noindent {\it Part 4.2. We prove that $x_\mathcal{F}(\cdot)$  satisfies  \eqref{stbleexpression}.    }

Since $jt_{\hbar}=t_{j\hbar}$ for all $j\in \mathbb{N}^+$ (see \eqref{1.5201967}), it follows from
 (\ref{yu-5-9-10}) that
$$
    \langle P_{\hbar}x_{\mathcal{F}}(t^+_{j\hbar}),x_{\mathcal{F}}(t^+_{j\hbar})\rangle \leq e^{-\mu
    t_{j\hbar}}\langle P_{\hbar}x_{\mathcal{F}}(0),x_{\mathcal{F}}(0)\rangle
    \;\mbox{for each}\;j\in\mathbb{N}^+.
$$
    This, along with \eqref{yu-5-11-1}, indicates
\begin{equation}\label{yu-5-9-11}
    \|x_{\mathcal{F}}(t_{j\hbar}^+)\|_{\mathbb{R}^n}\leq
    C_{\min}^{-\frac{1}{2}}C_{\max}^{\frac{1}{2}}e^{-\frac{\mu}{2}t_{j\hbar}}
    \|x_{\mathcal{F}}(0)\|_{\mathbb{R}^n}\;\mbox{for
    each}\;j\in\mathbb{N}^+.
\end{equation}
    Arbitrarily fix  $t>t_\hbar$. There is  $j^*\in\mathbb{N}^+$ such that $t_{j^*\hbar}<t\leq
    t_{(j^*+1)\hbar}$. From \eqref{1.5201967}, we have $t_{j^*\hbar}\geq t-t_{\hbar}$. These, together with  (\ref{yu-5-9-11}), yield
\begin{eqnarray*}
    \|x_{\mathcal{F}}(t)\|_{\mathbb{R}^n}&\leq& C\|x_{\mathcal{F}}(t_{j^*\hbar}^+)\|_{\mathbb{R}^n}\leq
    CC_{\min}^{-\frac{1}{2}}C_{\max}^{\frac{1}{2}}e^{-\frac{\mu}{2}t_{j^*\hbar}}
    \|x_{\mathcal{F}}(0)\|_{\mathbb{R}^n}\nonumber\\
    &\leq& CC_{\min}^{-\frac{1}{2}}C_{\max}^{\frac{1}{2}}e^{-\frac{\mu}{2}t_{\hbar}}e^{-\frac{\mu}{2}t}
    \|x_{\mathcal{F}}(0)\|_{\mathbb{R}^n}.
\end{eqnarray*}
   Here $C:=\sup_{s\in[0,t_{\hbar}]}\|S_{\mathcal{F}}(s,0)\|_{\mathcal{L}(\mathbb{R}^n)}$, where $S_{\mathcal{F}}(\cdot,\cdot)$ is the
      transition matrix of the closed-loop system $(A,\{B_kF_k\}_{k=1}^{\hbar},\Lambda_{\hbar})$
      (i.e., (\ref{yu-10-24-4})). So $x_\mathcal{F}(\cdot)$  satisfies  \eqref{stbleexpression}.

     \vskip 5pt

     {\it Step 5. We prove that $(iii)'\Rightarrow (iii)$.}

     Suppose that $(iii)'$ is true. Then by Steps 1-4, we have $(i)$ and $(ii)$.
To show $(iii)$, we arbitrarily fix
$\mathcal{Q}=( Q_j)_{j\in \mathbb{N}^+}\in \mathfrak{M}^{n}_{\hbar,+}$
and $\mathcal{R}=(R_j)_{j\in \mathbb{N}^+}\in \mathfrak{M}^{m}_{\hbar,+}$, and then let $\{\hat P_l\}_{l=0}^\hbar$ be a solution to  (\ref{yu-11-26-1}). It suffices to show
\begin{equation}\label{yu-u-5-10-1}
    V(x_0;l)=\langle \hat P_lx_0,x_0\rangle\;\mbox{for all}\;l\in\{0,1,\ldots,\hbar\}\; \mbox{and}\;x_0\in\mathbb{R}^n,
\end{equation}
 where $V(\cdot;l)$ is given by  (\ref{4.24.2.6}).

   To show
   (\ref{yu-u-5-10-1}), we arbitrarily fix  $l\in \{0,1,\ldots,\hbar\}$ and
   $x_0\in\mathbb{R}^n$, and then arbitrarily fix
       $v=(v_j)_{j\in \mathbb{N}^+}\in\mathcal{U}_{ad}(x_0;l)$. (Notice that $\mathcal{U}_{ad}(x_0;l)\neq \emptyset$, which follows from
    Lemma \ref{yu-lemma-3-12-1} and  $(ii)$ of Theorem \ref{yu-theorem-11-27-1}.)
     Since
     $$
     x(t^+_{j+1};v,x_0,l)=e^{A(t_{j+1}-t_j)}x(t^+_j;v,x_0,l)+B_{j+1}v_{j+1}\;\;\mbox{for all}\;\;
     j\geq l
     $$
     and
$$
t_{j+1}-t_j=t_{\vartheta(j+1)}-t_{\vartheta(j)}\;\;\mbox{for all}\;\;j\geq l,
$$
   we can directly verify from  (\ref{yu-11-26-1})  that when $j\geq l$,
\begin{eqnarray*}
    &\;&\langle \hat P_{\vartheta(j)}x(t^+_{j};v,x_0,l),x(t^+_{j};v,x_0,l)\rangle-\langle
    \hat P_{\vartheta(j+1)}x(t_{j+1}^+;v,x_0,l),x(t_{j+1}^+;v,x_0,l)\rangle\nonumber\\
    &=&[\langle Q_{j+1}x(t_{j+1};v,x_0,j),x(t_{j+1};v,x_0,j)\rangle
    +\langle R_{j+1}v_{j+1},v_{j+1}\rangle]\nonumber\\
        &\;&-\big\|(B_{\vartheta(j+1)}^\top
   \hat P_{\vartheta(j+1)}B_{\vartheta(j+1)}+R_{j+1})^{\frac{1}{2}}v_{j+1}\nonumber\\
        &\;&
    +(B_{\vartheta(j+1)}^\top
    \hat P_{\vartheta(j+1)}B_{\vartheta(j+1)}+ R_{j+1})^{-\frac{1}{2}}B_{\vartheta(j+1)}^\top
    \hat P_{\vartheta(j+1)}e^{A(t_{j+1}-t_j)}x(t^+_{j};v,x_0,l)\big\|^2_{\mathbb{R}^n}.
\end{eqnarray*}
    This, along with the definition of $J(\cdot;x_0,l)$ (see \eqref{yu-11-25-1}), leads to
\begin{eqnarray}\label{2.42201966}
    &\;&J(v;x_0,l)=\langle \hat P_lx_0,x_0\rangle\\
    &\;&+\sum_{j=l+1}^\infty \big\|(B_{\vartheta(j)}^\top
   \hat P_{\vartheta(j)}B_{\vartheta(j)}+R_{j})^{\frac{1}{2}}v_{j}
     +(B_{\vartheta(j)}^\top \hat P_{\vartheta(j)}B_{\vartheta(j)}+
     R_{j})^{-\frac{1}{2}} B_{\vartheta(j)}^\top
     \hat P_{\vartheta(j)}x(t_{j};v,x_0,l)\big\|^2_{\mathbb{R}^n}.\nonumber
\end{eqnarray}
    (The series in the above converges due to $v\in\mathcal{U}_{ad}(x_0;l)$.)
        This, together with
    \eqref{4.24.2.6}, gives that
\begin{equation}\label{yu-6-24-1}
    V(x_0;l)\geq \langle \hat{P}_lx_0,x_0\rangle.
\end{equation}
   Next, we let
    $\mathcal{F}$ be given by \eqref{yu-12-4-1-b} (with $P_k=\hat P_k$). Let $x_\mathcal{F}(\cdot;l)$ be the solution
    to the equation:
\begin{equation*}
\begin{cases}
    x'(t)=Ax(t),&t\in(t_l,+\infty)\setminus\Lambda_{\hbar},\\
    \triangle x(t_j)=B_{\vartheta(j)}F_{\vartheta(j)}x(t_j),&j>l,\\
    x(t_l^+)=x_0.
\end{cases}
\end{equation*}
   Then by taking $\hat{v}=(\hat{v}_j)_{j>l}=(F_{\vartheta(j)}x_{\mathcal{F}}(t_j;l))_{j>l}$,
   we can easily verify  that
$$
    x(t_j;\hat{v},x_0,l)=x(t_j;(F_{\vartheta(k)}x_{\mathcal{F}}(t_k;l))_{k>l},x_0,l)=x_{\mathcal{F}}(t_j;l).
$$
    This, together with (\ref{2.42201966}) and \eqref{4.24.2.6}, yields
$$
   V(x_0;l)\leq  J(\hat{v};x_0,l)=\langle \hat{P}_lx_0,x_0\rangle,
$$
    which, along with \eqref{yu-6-24-1}, leads to
\eqref{yu-u-5-10-1}.

Thus we end the proof of Theorem \ref{yu-theorem-11-27-1}.

\section{Proof of main results (Part II)}

The purpose of this section is to prove Theorem \ref{yu-theorem-3-14-1} and Theorem \ref{yu-theorem-5-13-1}.
\subsection{Preliminary lemmas}
We start with  the controllability of the system $(A,\{B\},\Lambda_{\hbar})$ which is
the system (\ref{yu-10-24-1}) where $B_k=B$ for all $k$.
Given $T>t_1$, we write
\begin{equation*}
     m_T:=\mbox{Card}(\Lambda_{\hbar}\cap (0,T)).
\end{equation*}
   \begin{itemize}
  \item
  The system  $(A,\{B\},\Lambda_{\hbar})$ is said to be controllable at
  time $T>t_1$, if for any $x_0\in\mathbb{R}^n$, there is
  $u:=(u_1,u_2,\cdots,u_{m_T})\odot0\in l^\infty(\mathbb{R}^m)$ so that $x(T;u,x_0)=0$.
\end{itemize}
{\it Recall     (\ref{yu-3-17-4}) and (\ref{yu-3-17-3}) for the definitions of   $q^{n,m}(A,B)$ and $d_A$.}

\begin{lemma}\label{yu-lemma-5-11-2}(\cite[Theorem 2.3.1]{b1}) The system
     $(A,\{B\},\Lambda_{\hbar})$
    is controllable at  $T$ if and only if
$$
    \mbox{Rank}\;(e^{A(T-t_1)}B,e^{A(T-t_2)}B,\cdots, e^{A(T-t_{m_T})}B)=n.
$$
\end{lemma}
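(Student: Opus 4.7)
The plan is to reduce the controllability question at time $T$ to a linear algebra problem via the explicit solution formula already recorded in the introduction, and then to recognize the rank condition as precisely the surjectivity criterion for the resulting linear map.

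First, I would specialize the solution formula $x(t;u,x_0)=e^{At}x_0+\sum_{0<t_j<t}e^{A(t-t_j)}B_{\vartheta(j)}u_j$ to the present setting where $B_k=B$ for all $k$, and evaluate at $t=T$. Since $m_T=\mathrm{Card}(\Lambda_\hbar\cap(0,T))$ counts exactly the impulse instants that contribute, this gives
\begin{equation*}
x(T;u,x_0)=e^{AT}x_0+\sum_{j=1}^{m_T}e^{A(T-t_j)}Bu_j
=e^{AT}x_0+\mathcal{M}_T\,\mathbf{u},
\end{equation*}
where $\mathbf{u}:=(u_1^\top,\ldots,u_{m_T}^\top)^\top\in\mathbb{R}^{m\cdot m_T}$ and $\mathcal{M}_T:=(e^{A(T-t_1)}B,\,e^{A(T-t_2)}B,\,\cdots,\,e^{A(T-t_{m_T})}B)\in\mathbb{R}^{n\times m\cdot m_T}$. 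Note that only the first $m_T$ components of the control enter, so restricting to controls of the form $(u_1,\ldots,u_{m_T})\odot 0$ (as in the definition of controllability at time $T$) does not lose any generality.

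Next I would observe that the controllability of $(A,\{B\},\Lambda_\hbar)$ at time $T$ is, by the above identity, equivalent to the statement that for every $x_0\in\mathbb{R}^n$ the equation
\begin{equation*}
\mathcal{M}_T\,\mathbf{u}=-e^{AT}x_0
\end{equation*}
admits a solution $\mathbf{u}\in\mathbb{R}^{m\cdot m_T}$. Since $e^{AT}$ is a bijection of $\mathbb{R}^n$ onto itself, the right-hand side $-e^{AT}x_0$ ranges over all of $\mathbb{R}^n$ as $x_0$ varies. Hence controllability at time $T$ is equivalent to the surjectivity of the linear map $\mathbf{u}\mapsto\mathcal{M}_T\mathbf{u}$, which in turn is equivalent to $\mathrm{Rank}\,\mathcal{M}_T=n$. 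This is exactly the asserted condition.

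No real obstacle arises: the argument is purely linear-algebraic, hinging only on the invertibility of $e^{AT}$ and on the explicit solution formula. The only small care needed is the book-keeping of indices, namely verifying that the range $0<t_j<t$ in the solution formula truncates, at $t=T$, precisely to $1\le j\le m_T$, and confirming that admissible controls for the controllability definition (those of the form $(u_1,\ldots,u_{m_T})\odot 0$) are exactly the ones relevant to solving the linear system above.
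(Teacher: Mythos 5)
The paper offers no proof of this lemma at all: it is quoted verbatim from \cite[Theorem 2.3.1]{b1}, so there is no in-paper argument to compare yours against. Your proposal is a correct and self-contained derivation. Writing $x(T;u,x_0)=e^{AT}x_0+\mathcal{M}_T\mathbf{u}$ with $\mathcal{M}_T=(e^{A(T-t_1)}B,\ldots,e^{A(T-t_{m_T})}B)$, noting that the sum over $0<t_j<T$ truncates exactly to $j=1,\ldots,m_T$ by the definition of $m_T$ (and that controls of the form $(u_1,\ldots,u_{m_T})\odot 0$ are precisely the ones that enter), and then using the invertibility of $e^{AT}$ to identify null-controllability from every $x_0$ with surjectivity of $\mathcal{M}_T$, is exactly the standard route one would take; the equivalence of surjectivity with $\mathrm{Rank}\,\mathcal{M}_T=n$ closes the argument. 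The only point worth making explicit, which you implicitly handle via the left-continuity convention for solutions, is that if $T$ happens to coincide with an impulse instant then that instant is excluded both from $m_T$ (open interval) and from the sum ($t_j<T$), so the book-keeping is consistent. I see no gap.
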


\begin{lemma}\label{wang-qin} (\cite[Theorem 2.2]{b2})
    Let  $\{t_j\}_{j=1}^{q^{n,m}(A,B)}\subset\mathbb{R}^+$
    be an increasing strictly sequence satisfying
    $t_{q^{n,m}(A,B)}-t_1<d_A$. Then
\begin{equation*}
    \mbox{Rank}\;(e^{At_1}B,\cdots,e^{At_{q^{n,m}(A,B)}}B)=\mbox{Rank}\;(B,AB,\cdots,A^{n-1}B).
\end{equation*}
    \end{lemma}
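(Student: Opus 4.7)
Set $q := q^{n,m}(A,B)$. My plan is to establish the two rank inequalities separately. The inequality $\leq$ is immediate from Cayley-Hamilton: every $e^{At_j}$ is a polynomial in $A$ of degree at most $n-1$, so each column of $e^{At_j}B$ lies in the column space of $(B,AB,\ldots,A^{n-1}B)$.

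For the reverse inequality $\geq$, passing to left kernels, it suffices to show that if $v\in\mathbb{R}^n$ satisfies $v^\top e^{At_j}B = 0$ for all $j=1,\ldots,q$, then $v^\top A^k B = 0$ for all $k = 0,\ldots,n-1$. I would fix an arbitrary column $b$ of $B$ and set $d := \dim \mathcal{V}^n_A(b)\leq q$. The cyclic subspace $\mathcal{V}^n_A(b)$ is $A$-invariant, and the minimal polynomial of $A|_{\mathcal{V}^n_A(b)}$ has degree exactly $d$ with roots contained in $\sigma(A)$. Consequently the scalar-valued real-analytic function $f(t) := v^\top e^{At}b$ is an exponential polynomial $f(t) = \sum_k p_k(t) e^{\lambda_k t}$ with $\lambda_k \in \sigma(A)$ and total order $\sum_k (\deg p_k + 1) = d$. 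By hypothesis $f$ has at least $q \geq d$ zeros in the interval $[t_1, t_q]$ of length $L := t_q - t_1 < d_A$.

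The main obstacle is now to conclude $f \equiv 0$ from these zeros. The appropriate tool is the classical zero-counting estimate for exponential polynomials (typically proved by iterated Rolle's theorem combined with a reduction that strips off one exponential at a time): a nonzero $f$ of the above form has at most $d - 1 + (L/\pi)\max_k|\mathrm{Im}\,\lambda_k|$ real zeros in any interval of length $L$. Since the definition of $d_A$ gives $\max_\lambda|\mathrm{Im}\,\lambda| \leq \pi/d_A$ for $\lambda \in \sigma(A)$, the hypothesis $L < d_A$ reduces this bound to strictly less than $d$, contradicting the existence of $q \geq d$ zeros in $[t_1,t_q]$ unless $f \equiv 0$. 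Expanding $f(t) = \sum_{k\geq 0} (t^k/k!)\, v^\top A^k b$ then forces $v^\top A^k b = 0$ for every $k \geq 0$, and letting $b$ range over the columns of $B$ yields $v^\top A^k B = 0$ for all $k$, which completes the proof. The only delicate point in the zero-counting estimate is handling eigenvalues on the imaginary axis, which I would treat by writing $f = e^{\alpha t}\tilde f$ with $\alpha$ the mean real part and then reducing oscillatory factors in conjugate pairs via a trigonometric substitution.
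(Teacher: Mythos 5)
The paper offers no proof of this lemma at all: it is imported verbatim as \cite[Theorem 2.2]{b2}, so there is no internal argument to compare against. Your reconstruction is sound and in fact follows the same strategy as the cited source: the inequality $\leq$ by Cayley--Hamilton, the reduction to left kernels, the passage to a single column $b$ and its cyclic subspace so that $f(t)=v^\top e^{At}b$ solves a constant-coefficient ODE of order $d=\dim\mathcal{V}^n_A(b)\leq q^{n,m}(A,B)$ with characteristic roots in $\sigma(A)$, and then a zero-counting argument on the window $[t_1,t_{q}]$ of length less than $d_A$. All of the linear-algebra steps check out (note only that the total order of the exponential polynomial is $\leq d$ rather than $=d$, which is harmless since the zero bound is monotone in the order). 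The one place where the entire analytic content is deferred is the zero-counting estimate; this is legitimate, since the statement you need --- a nontrivial real solution of $p(D)y=0$ has at most $\deg p-1$ zeros in any interval $I$ with $|I|\cdot\max_k|\mathrm{Im}\,\lambda_k|<\pi$ --- is the classical disconjugacy theorem for constant-coefficient operators (equivalently, the extended-Chebyshev-system property of $\{t^le^{\lambda_kt}\}$ on short intervals), and with $L<d_A$ it yields exactly the contradiction you want. I would, however, be careful with the sharper affine bound $d-1+(L/\pi)\max_k|\mathrm{Im}\,\lambda_k|$ you quote for arbitrary $L$: it is plausible but you do not need it and should either cite it precisely or replace it by the disconjugacy form. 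Likewise, your closing remark that imaginary-axis eigenvalues are the delicate case is a red herring --- real parts can always be stripped off by multiplying by $e^{-\alpha t}$ without moving zeros; the genuine work in the Rolle/Polya-factorization induction lies in handling every nonreal conjugate pair, which is exactly where the hypothesis $|I|<\pi/|\mathrm{Im}\,\lambda|$ is consumed.
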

    Based on Lemma \ref{yu-lemma-5-11-2} and Lemma \ref{wang-qin}, we can easily obtain the next Lemma \ref{yu-proposition-3-21-1}.
    \begin{lemma}\label{yu-proposition-3-21-1}
    Let $\Lambda_{\hbar}$  verify that $t_{q^{n,m}(A,B)}-t_1<d_A$. Suppose that
\begin{equation*}\label{yu-3-21-b-1-1-b}
  \mbox{Rank}\;(B,AB,\cdots,A^{n-1}B)=n.
\end{equation*}
  Then, for any $T> t_{q^{n,m}(A,B)}$, $(A,\{B\},\Lambda_{\hbar})$ is controllable at $T$.
  \end{lemma}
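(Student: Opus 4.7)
The plan is to read off controllability from Lemma \ref{yu-lemma-5-11-2}, which reduces the problem to verifying the rank condition
\[
    \mbox{Rank}\;\bigl(e^{A(T-t_1)}B,\,e^{A(T-t_2)}B,\,\ldots,\,e^{A(T-t_{m_T})}B\bigr)=n,
\]
and then to produce this rank via Lemma \ref{wang-qin} applied to an appropriately chosen sub-sequence.

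First I would set $q:=q^{n,m}(A,B)$. Since $T>t_q$, all of the instants $t_1<t_2<\cdots<t_q$ lie in $(0,T)\cap\Lambda_\hbar$, so $m_T\geq q$. Consequently the matrix in the display above contains, among its columns, the columns of
\[
    \bigl(e^{A(T-t_q)}B,\,e^{A(T-t_{q-1})}B,\,\ldots,\,e^{A(T-t_1)}B\bigr).
\]
Define $s_j:=T-t_{q-j+1}$ for $j=1,\ldots,q$. Since $t_1<\cdots<t_q<T$, the sequence $s_1<s_2<\cdots<s_q$ is strictly increasing and consists of positive reals. Moreover, its span is
\[
    s_q-s_1=(T-t_1)-(T-t_q)=t_q-t_1<d_A,
\]
where the strict inequality is precisely the hypothesis on $\Lambda_\hbar$.

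So Lemma \ref{wang-qin} applies to $\{s_j\}_{j=1}^{q}$ and yields
\[
    \mbox{Rank}\;\bigl(e^{As_1}B,\ldots,e^{As_q}B\bigr)=\mbox{Rank}\;(B,AB,\ldots,A^{n-1}B)=n,
\]
using the Kalman rank assumption for the second equality. Because these $q$ columns sit inside the $m_T$ columns of the matrix from Lemma \ref{yu-lemma-5-11-2} (possibly after reordering, which does not change rank), that larger matrix also has rank $n$. Invoking Lemma \ref{yu-lemma-5-11-2} finishes the argument.

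I do not expect any substantial obstacle: the statement is essentially the concatenation of the controllability criterion from \cite{b1} with the rank-preservation result from \cite{b2}. The only small care needed is to (i) pass to the ``time-reversed'' sequence $s_j=T-t_{q-j+1}$ so that we get an \emph{increasing} sequence with span $t_q-t_1<d_A$, and (ii) observe that $T>t_q$ guarantees $m_T\geq q$ so that we really are looking at a sub-collection of columns of the matrix appearing in Lemma \ref{yu-lemma-5-11-2}.
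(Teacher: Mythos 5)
Your proposal is correct and is exactly the argument the paper intends: the paper states Lemma \ref{yu-proposition-3-21-1} follows ``easily'' from Lemma \ref{yu-lemma-5-11-2} and Lemma \ref{wang-qin} without writing the details, and your write-up supplies precisely those details (the time-reversed increasing sequence $s_j=T-t_{q-j+1}$ with span $t_q-t_1<d_A$, plus the observation that $m_T\geq q$).
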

  \begin{lemma}\label{lemma3.42019610}
  If $(A,\{B\},\Lambda_{\hbar})$ is controllable at some time $T>0$, then it is $\hbar$-stabilizable.
  \end{lemma}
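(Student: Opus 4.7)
The plan is to invoke the equivalence $(ii)\Leftrightarrow(i)$ in Theorem \ref{yu-theorem-11-27-1}, specialized to the uniform case $B_k=B$. It then suffices to show that controllability at some $T>0$ implies $\mathcal{U}_{ad}(x_0)\neq\emptyset$ for every $x_0\in\mathbb{R}^n$.

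So I would start by fixing an arbitrary $x_0\in\mathbb{R}^n$. By the controllability hypothesis, there is a control of the form $u=(u_1,u_2,\ldots,u_{m_T})\odot 0\in l^\infty(\mathbb{R}^m)$ so that $x(T;u,x_0)=0$. Next I would verify that this $u$ in fact belongs to $\mathcal{U}_{ad}(x_0)$. Observing that $u_j=0$ for all $j>m_T$, the dynamics $x'=Ax$ with zero jumps forces $x(t;u,x_0)=e^{A(t-T)}\,x(T^+;u,x_0)$ for all $t\geq T$. Since $x(T;u,x_0)=0$ and any potential impulse at $t=T$ (if $T=t_{m_T+1}$) carries zero control, we get $x(T^+;u,x_0)=0$, and hence by an induction over the remaining impulse instants $x(t_j;u,x_0)=0$ for every $t_j\geq T$. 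Therefore only finitely many terms in both $(u_j)_{j\in\mathbb{N}^+}$ and $(x(t_j;u,x_0))_{j\in\mathbb{N}^+}$ are nonzero, which places both sequences in $l^2$; by the definition \eqref{yu-11-19-2}, $u\in\mathcal{U}_{ad}(x_0)$.

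Having established $\mathcal{U}_{ad}(x_0)\neq\emptyset$ for every $x_0$, I would close by quoting Theorem \ref{yu-theorem-11-27-1}, specifically the implication $(ii)\Rightarrow(i)$, to conclude that $(A,\{B\},\Lambda_{\hbar})$ is $\hbar$-stabilizable.

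There is no real obstacle here; the only care point is confirming that the extension-by-zeros really keeps the state at rest after time $T$, which requires noting that the only way an impulse could disturb the zero state after $T$ would be via a nonzero control, and our chosen control has been set to zero on that tail. The substance of the lemma is packaged entirely in the equivalence proved in Theorem \ref{yu-theorem-11-27-1}, so this step is a short bridge between the controllability notion and the nonemptiness of the admissible set.
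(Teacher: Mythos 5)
Your proposal is correct and takes essentially the same route as the paper: the paper's proof also reduces the lemma to the nonemptiness of $\mathcal{U}_{ad}(x_0)$ for every $x_0$ and then invokes the implication $(ii)\Rightarrow(i)$ of Theorem \ref{yu-theorem-11-27-1}. The only difference is that the paper leaves the extension-by-zero verification implicit, whereas you spell it out (correctly).
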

  \begin{proof}
  By the controllability of $(A,\{B\},\Lambda_{\hbar})$ and by \eqref{yu-11-19-2}, we find
  that   $\mathcal{U}_{ad}(x_0)\neq \emptyset$ for each $x_0\in\mathbb{R}^n$. Then
  the $\hbar$-stabilizability of $(A,\{B\},\Lambda_{\hbar})$ follows from
 Theorem \ref{yu-theorem-11-27-1}.
  \end{proof}

    Given $C\in \mathbb{R}^{p\times p}$ and $D\in \mathbb{R}^{p\times q}$ (with $p,q\in \mathbb{N}^+$), we write
     \begin{eqnarray}\label{3.22019617}
     \mathcal{R}[C,D]:=\mbox{Rank}\;(D,CD,\cdots,C^{p-1}D).
      \end{eqnarray}
      \begin{lemma}\label{yu-lemma-5-10-1}
   Suppose that  $\mathcal{R}[A,B]:=r<n$. Then there is an invertible $L\in\mathbb{R}^{n\times
    n}$ so that
\begin{equation}\label{yu-3-17-1}
    LAL^{-1}=\left(
               \begin{array}{cc}
                 A_1 & A_2 \\
                  0& A_3 \\
               \end{array}
             \right),
    \;\;\;\;LB=\left(
                 \begin{array}{c}
                   \widetilde{B} \\
                   0 \\
                 \end{array}
               \right),\;\;\; \mathcal{R}[A_1,\widetilde{B}]=r,
\end{equation}
         where $A_1\in\mathbb{R}^{r\times r}$, $A_2\in\mathbb{R}^{r\times (n-r)}$,
    $A_3\in\mathbb{R}^{(n-r)\times (n-r)}$ and $\widetilde{B}\in\mathbb{R}^{r\times m}$.
    Furthermore, it holds that
    \begin{equation}\label{yu-3-17-5}
\begin{cases}
    q^{n,m}(A,B)=q^{r,m}(A_1,\widetilde{B}),\\
    \sigma(A)=\sigma(A_1)\cup\sigma(A_3).
\end{cases}
\end{equation}
\end{lemma}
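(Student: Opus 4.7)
The plan is to construct $L$ via the standard Kalman controllability decomposition, then verify the two ``furthermore'' identities by exploiting the block upper triangular structure.

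First, I would introduce the controllable subspace $\mathcal{V}:=\mathrm{span}\{b,Ab,\ldots,A^{n-1}b:b\text{ is a column of }B\}\subset\mathbb{R}^n$, which has dimension exactly $r$ by the hypothesis $\mathcal{R}[A,B]=r$. The key observation is that $\mathcal{V}$ is $A$-invariant: by the Cayley--Hamilton theorem applied to $A$, the vectors $A^n b$ lie in the span of $\{b,Ab,\ldots,A^{n-1}b\}\subset\mathcal{V}$, so $A\mathcal{V}\subset\mathcal{V}$. Also, every column of $B$ lies in $\mathcal{V}$ by definition. I would then pick a basis $\{e_1,\ldots,e_r\}$ of $\mathcal{V}$, extend it to a basis $\{e_1,\ldots,e_n\}$ of $\mathbb{R}^n$, and define $L$ as the inverse of the matrix whose columns are $e_1,\ldots,e_n$ (so that $L e_i$ is the $i$-th standard basis vector).

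In this new basis, the $A$-invariance of $\mathcal{V}$ forces $LAL^{-1}$ to have the block upper triangular shape in \eqref{yu-3-17-1} with blocks $A_1\in\mathbb{R}^{r\times r}$, $A_2\in\mathbb{R}^{r\times(n-r)}$, $A_3\in\mathbb{R}^{(n-r)\times(n-r)}$; since each column of $B$ lies in $\mathcal{V}$, its image under $L$ has zero last $n-r$ components, giving $LB=\binom{\widetilde{B}}{0}$. To verify $\mathcal{R}[A_1,\widetilde{B}]=r$, I would use the computation
\begin{equation*}
L A^k b = (LAL^{-1})^k L b = \begin{pmatrix} A_1^k \widetilde{b} \\ 0 \end{pmatrix} \quad\text{for each column } b \text{ of } B, \text{ with } \widetilde{b} \text{ the corresponding column of } \widetilde{B},
\end{equation*}
which shows that the columns of $(\widetilde{B},A_1\widetilde{B},\ldots,A_1^{n-1}\widetilde{B})$ are the first-$r$-coordinate projections of $L$ times the columns of $(B,AB,\ldots,A^{n-1}B)$. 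Since $L$ is invertible, the first family spans $\mathbb{R}^r$, so $\mathcal{R}[A_1,\widetilde{B}]=r$.

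For the furthermore statements: the spectrum identity $\sigma(A)=\sigma(A_1)\cup\sigma(A_3)$ is immediate from the similarity $A\sim LAL^{-1}$ and the block triangular factorization $\det(\lambda\mathbb{I}_n-LAL^{-1})=\det(\lambda\mathbb{I}_r-A_1)\det(\lambda\mathbb{I}_{n-r}-A_3)$. For $q^{n,m}(A,B)=q^{r,m}(A_1,\widetilde{B})$, the computation displayed above gives, for each column $b$ of $B$ with corresponding column $\widetilde{b}$ of $\widetilde{B}$,
\begin{equation*}
\dim\mathcal{V}^{n}_A(b)=\dim\mathrm{span}\{b,Ab,\ldots,A^{n-1}b\}=\dim\mathrm{span}\{\widetilde{b},A_1\widetilde{b},\ldots,A_1^{n-1}\widetilde{b}\},
\end{equation*}
and by Cayley--Hamilton applied to $A_1\in\mathbb{R}^{r\times r}$, the right-hand side equals $\dim\mathcal{V}^{r}_{A_1}(\widetilde{b})$. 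Taking the maximum over columns yields $q^{n,m}(A,B)=q^{r,m}(A_1,\widetilde{B})$.

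I do not anticipate a serious obstacle here; the entire argument is a bookkeeping exercise once $\mathcal{V}$ is identified as $A$-invariant. The only point requiring mild care is ensuring that the exponents in the definitions match: for $q^{n,m}(A,B)$ we use powers up to $A^{n-1}$, whereas for $q^{r,m}(A_1,\widetilde{B})$ we use powers up to $A_1^{r-1}$; the Cayley--Hamilton reduction in the last display bridges this gap cleanly.
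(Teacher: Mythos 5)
Your argument is correct and is exactly the standard Kalman controllability decomposition that the paper invokes by citation (to \cite{b22}) together with the ``directly derived'' verification of \eqref{yu-3-17-5} that the paper omits; the Cayley--Hamilton bookkeeping you flag at the end is precisely the point needing care, and you handle it correctly. Nothing further is required.
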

   The above (\ref{yu-3-17-1}) is the well-known Kalman controllability decomposition
   (see \cite[Lemma 3.3.3 and Lemma 3.3.4]{b22}), while \eqref{yu-3-17-5}
   can  be directly derived from  (\ref{yu-3-17-1}). (We omit the detailed proof.)
   {\it We call $A_3$ as the uncontrollable part of $(A,B)$.}

   Recall \eqref{yu-3-17-3} and \eqref{yu-3-17-4} for the definitions of $d_A$ and $q^{n,m}(A,B)$.
   Let
   \begin{equation}\label{1.18201969}
    \mathfrak{L}_{A,B}:=\{\{t_j\}_{j\in\mathbb{N}}\subset\mathbb{R}^+:
    \mbox{Card}((s,s+d_A)\cap \{t_j\}_{j\in\mathbb{N}})\geq q^{n,m}(A,B) \;\forall s\in\mathbb{R}^+\}.
\end{equation}

   \begin{lemma}\label{yu-proposition-5-13-1}
    Suppose that  $(iii)$ of  Theorem \ref{yu-theorem-3-14-1},
    where  $\{B_k\}_{k=1}^\hbar=\{B\}$,
    is true. Then for each  $\Lambda_{\hbar}\in\mathfrak{I}_{\hbar}\cap\mathfrak{L}_{A,B}$,
    $(A,\{B\},\Lambda_{\hbar})$ is $\hbar$-stabilizble.
\end{lemma}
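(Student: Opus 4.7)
The plan is to apply Kalman's controllability decomposition (Lemma \ref{yu-lemma-5-10-1}) to $(A,B)$, use the Hautus-type condition $(iii)$ to force the uncontrollable block to be Hurwitz, invoke Lemmas \ref{yu-proposition-3-21-1} and \ref{lemma3.42019610} to $\hbar$-stabilize the controllable block, and assemble the two ingredients via a block-triangular perturbation argument.

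Set $r := \mathcal{R}[A,B]$. If $r = n$, then $\Lambda_\hbar \in \mathfrak{L}_{A,B}$ forces $t_{q^{n,m}(A,B)} < d_A$, so Lemma \ref{yu-proposition-3-21-1} gives controllability of $(A,\{B\},\Lambda_\hbar)$ at some time $T$, and Lemma \ref{lemma3.42019610} delivers $\hbar$-stabilizability. So I focus on $r<n$ and apply Lemma \ref{yu-lemma-5-10-1} to obtain $L$, $A_1$, $A_2$, $A_3$, $\tilde B$ as in (\ref{yu-3-17-1}). Since
\[
\mbox{Rank}(\lambda I - A, B) \;=\; \mbox{Rank}\begin{pmatrix} \lambda I_r - A_1 & -A_2 & \tilde B \\ 0 & \lambda I_{n-r} - A_3 & 0 \end{pmatrix},
\]
any $\lambda \in \sigma(A_3) \cap \mathbb{C}^+$ would produce a nontrivial left null vector of the form $(0,v)^\top$ with $v^\top(\lambda I_{n-r}-A_3)=0$, dropping the rank below $n$ and violating $(iii)$. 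Consequently $\sigma(A_3) \subset \{z \in \mathbb{C} : \mbox{Re}\,z < 0\}$ and $\|e^{A_3 t}\|$ decays exponentially.

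Next, (\ref{yu-3-17-5}) gives $q^{n,m}(A,B) = q^{r,m}(A_1,\tilde B)$, and $\sigma(A_1) \subset \sigma(A)$ gives $d_A \leq d_{A_1}$; together they imply $\Lambda_\hbar \in \mathfrak{L}_{A_1,\tilde B}$. Lemmas \ref{yu-proposition-3-21-1} and \ref{lemma3.42019610} then yield $\hbar$-stabilizability of $(A_1,\{\tilde B\},\Lambda_\hbar)$; fix a stabilizing feedback $\{\tilde F_k\}_{k=1}^\hbar$ and let $\Phi_1(t,s)$ denote the transition matrix of the closed-loop $z^{(1)}$-subsystem, which by $\hbar$-periodicity together with Theorem \ref{yu-theorem-11-27-1} satisfies $\|\Phi_1(t,s)\|_{\mathcal{L}(\mathbb{R}^r)} \leq M_1 e^{-\mu_1(t-s)}$ for all $0 \leq s \leq t$. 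I would then set $F_k := (\tilde F_k \;\; 0)\, L \in \mathbb{R}^{m\times n}$ as feedback for the original system. Under the change of variables $z = Lx = (z^{(1)},z^{(2)})$, the closed-loop system (\ref{yu-10-24-4}) decouples into $z^{(2)\prime}(t) = A_3 z^{(2)}(t)$ (with no impulse) and $z^{(1)\prime}(t) = A_1 z^{(1)}(t) + A_2 z^{(2)}(t)$ with impulses $\triangle z^{(1)}(t_j) = \tilde B \tilde F_{\vartheta(j)} z^{(1)}(t_j)$. Variation of constants gives
\[
z^{(1)}(t) = \Phi_1(t,0) z^{(1)}(0) + \int_0^t \Phi_1(t,s)\, A_2\, e^{A_3 s} z^{(2)}(0) \, ds,
\]
and combining the two exponential decays produces $\|z(t)\|_{\mathbb{R}^n} \leq C e^{-\mu t}\|z(0)\|_{\mathbb{R}^n}$ for some $\mu > 0$. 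Reverting to $x = L^{-1}z$ yields (\ref{stbleexpression}) and hence $\hbar$-stabilizability of the original system.

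The step I expect to demand the most care is establishing the uniform bound $\|\Phi_1(t,s)\|_{\mathcal{L}(\mathbb{R}^r)} \leq M_1 e^{-\mu_1(t-s)}$ for every $s \geq 0$ rather than just $s = 0$: this is a Floquet-type argument that exploits the $\hbar$-periodic structure of the closed-loop impulse scheme to show that the one-period monodromy $\Phi_1((j+1)t_\hbar,\, j t_\hbar)$ has spectral radius strictly less than one uniformly in $j \in \mathbb{N}$. Once this uniform estimate is secured, absorbing the exponentially decaying forcing $A_2 e^{A_3 s} z^{(2)}(0)$ is a routine Gronwall-type calculation, and the block-triangular structure then delivers exponential stability of the entire closed-loop system.
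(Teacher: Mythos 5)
Your proposal is correct and follows essentially the same route as the paper: the full-rank case via Lemmas \ref{yu-proposition-3-21-1} and \ref{lemma3.42019610}, and otherwise the Kalman decomposition of Lemma \ref{yu-lemma-5-10-1}, the Hautus condition forcing $\sigma(A_3)\cap\mathbb{C}^+=\emptyset$, the feedback $F_k=(\tilde F_k\;\;0)L$, and variation of constants on the block-triangular closed loop. Your extra care about upgrading the decay of the transition matrix from $s=0$ to all $s\geq 0$ via periodicity is a point the paper passes over silently in stating its estimate (\ref{yu-5-9-2}), but it is handled exactly as you describe.
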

               \begin{proof}
               By \eqref{1.18201969} and \eqref{yu-3-18-1}, we see that $\mathfrak{L}_{A,\mathcal{B},\hbar}\subset \mathfrak{L}_{A,B}$, where $\mathcal{B}$ is given by
               \eqref {1.162019611} (with $B_k=B\;\forall\;k$).
               This, along with
               the first note after Theorem \ref{yu-theorem-5-13-1}, we  see that
               $\mathfrak{I}_{\hbar}\cap\mathfrak{L}_{A,B}\neq\emptyset$.
                                   Thus,  we can arbitrarily fix $\Lambda_{\hbar}:=\{t_j\}_{j\in\mathbb{N}}\in\mathfrak{I}_{\hbar}\cap \mathfrak{L}_{A,B}$. This, along with \eqref{1.18201969}, yields
       \begin{eqnarray}\label{3.62019610}
       t_{q^{n,m}(A,B)}-t_1<d_A.
       \end{eqnarray}
            The rest of the proof is organized by two steps.

     \vskip 5pt

    \noindent {\it Step 1. We prove that $(A,\{B\},\Lambda_{\hbar})$ is $\hbar$-stabilizable in the case
     that $\mathcal{R}[A,B]=n$.}

By \eqref{3.62019610} and by the fact that $\mathcal{R}[A,B]=n$, we can apply
Lemma \ref{yu-proposition-3-21-1} to see that $(A,\{B\},\Lambda_{\hbar})$ is controllable at time $t_{q^{n,m}(A,B)+1}$. Then according to Lemma \ref{lemma3.42019610},  $(A,\{B\},\Lambda_{\hbar})$ is $\hbar$-stabilizble.

\vskip 5pt

 \noindent {\it Step 2. We prove that $(A,\{B\},\Lambda_{\hbar})$ is $\hbar$-stabilizable in the case
     that $\mathcal{R}[A,B]=r<n$.}

    First of all, according to Lemma
    \ref{yu-lemma-5-10-1}, there is an invertible matrix
    $L\in\mathbb{R}^{n\times n}$ so that (\ref{yu-3-17-1})-\eqref{yu-3-17-5} hold.
        We now claim
\begin{equation}\label{yu-5-11-3}
   \sigma(A_3)\cap\mathbb{C}^+=\emptyset.
\end{equation}
    If \eqref{yu-5-11-3} was not true, then there would be $\lambda_0\in \sigma(A_3)\cap \mathbb{C}^+$.
       So we have
    $\mbox{Rank}\;(\lambda_0 \mathbb{I}_{n-r}-A_3)<n-r$. This, along with the first two equalities in (\ref{yu-3-17-1}), yields
    \begin{eqnarray}\label{3.8617wan}
    \mbox{Rank}\;(\lambda_0 \mathbb{I}_{n}-A, B)\leq \mbox{Rank}\;(\lambda_0 \mathbb{I}_{r}-A_1, -A_2,  \widetilde{B})+\mbox{Rank}(\lambda_0\mathbb{I}_{n-r}-A_3)<n.
    \end{eqnarray}
    Meanwhile,  by $(iii)$ of  Theorem \ref{yu-theorem-3-14-1}, it follows that $n=\mbox{Rank}\;(\lambda_0 \mathbb{I}_{n}-A, B)$. This contradicts \eqref{3.8617wan}. So \eqref{yu-5-11-3} is true.

   Next, according to \eqref{yu-5-11-3}, there are positive constants $M_1$ and $\mu_1$ so that
    \begin{equation}\label{yu-3-18-4}
    \|e^{A_3t}\|_{\mathcal{L}(\mathbb{R}^{n-r})}\leq M_1e^{-\mu_1 t}\;\mbox{for any}\;t\in\mathbb{R}^+.
\end{equation}
   Meanwhile, from the second equality in (\ref{yu-3-17-5}), (\ref{yu-5-11-3}) and \eqref{yu-3-17-3}, we find that $d_A=d_{A_1}$,
   which, by the first equality in (\ref{yu-3-17-5}), yields  $\mathfrak{L}_{A,B}=\mathfrak{L}_{A_1,\widetilde{B}}$.
    Thus we have $\Lambda_{\hbar}\in \mathfrak{L}_{A_1,\widetilde{B}}$. Because of  this and the last equality in         \eqref{yu-3-17-1}, we can apply
        Lemma \ref{yu-proposition-3-21-1} to get
        the controllability of
    $(A_1,\{\widetilde{B}\},\Lambda_{\hbar})$  at
    $t_{q^{r,m}(A_1,\widetilde{B})+1}=t_{q^{n,m}(A,B)+1}$. From this and Lemma \ref{lemma3.42019610}, we can find
    a feedback law $\widetilde{\mathcal{F}}=\{\widetilde{F}_k\}_{k=1}^\hbar\subset\mathbb{R}^{m\times
    r}$ so that for some positive constants $\mu_2$ and $M_2$,
    \begin{equation}\label{yu-5-9-2}
    \|\widetilde{S}_{\widetilde{\mathcal{F}}}(t,s)\|_{\mathcal{L}(\mathbb{R}^r)}\leq M_2e^{-\mu_2(t-s)}\;\mbox{for any}\;t\geq s\geq 0.
\end{equation}
    Here, $\widetilde{S}_{\widetilde{\mathcal{F}}}(\cdot,\cdot)$  is  the transition matrix of
     the closed-loop system $(A_1, \{\widetilde{B}\widetilde{F}_k\}_{k=1}^\hbar, \Lambda_{\hbar})$
     (i.e.,  \eqref{yu-10-24-4} where $A=A_1$, $B_{k}=\widetilde{B}\;\forall\; k$, $F_k=\widetilde{F}_k\;\forall\; k$).

\par
    We now consider the closed-loop system:
\begin{equation}\label{yu-3-18-6}
\begin{cases}
    y'(t)=A_1y(t)+A_2z(t),&t\in\mathbb{R}^+\setminus \Lambda_{\hbar},\\
    z'(t)=A_3z(t),&t\in\mathbb{R}^+,\\
    \triangle y(t_j)=\widetilde{B}\widetilde{F}_{\vartheta(j)}y(t_j),&j\in\mathbb{N}^+.
\end{cases}
\end{equation}
   Let
   \begin{eqnarray}\label{3.112019610}
   \mathcal{F}:=\{F_k\}_{k=1}^\hbar,\;\;\mbox{with}\;\; F_k:=\left(
           \begin{array}{cc}
             \widetilde{F}_k & 0_{m\times(n-r)} \\
           \end{array}
         \right)L,\;k=1,2,\ldots,\hbar.
\end{eqnarray}
   Two facts are given in order: First, it follows from (\ref{yu-3-18-4}) and (\ref{yu-5-9-2})
   that there are two positive constants $\mu_3$ and $M_3$ so that for each solution $(y(\cdot),z(\cdot))^\top$ to \eqref{yu-3-18-6},
   \begin{equation*}
    \|(y(t),z(t))^\top\|_{\mathbb{R}^n}\leq M_3e^{-\mu_3t} \|(y(0),z(0))^\top\|_{\mathbb{R}^n}\;\;\mbox{for each}\;\;t\geq 0.
\end{equation*}
   (Here, we used that $ y(t)=\widetilde{S}_{\widetilde{\mathcal{F}}}(t,0)y(0)+\int_0^t
    \widetilde{S}_{\widetilde{\mathcal{F}}}(t,s)A_2z(s)ds$ and $z(t)=e^{A_3t}z(0)$.)
   Second, $(y(\cdot),z(\cdot))^\top$ solves \eqref{yu-3-18-6} if and only if
   $x(\cdot):=L^{-1}(y(\cdot),z(\cdot))^\top$ solves \eqref{yu-10-24-4}
   where $B_k=B\;\forall\;k$ and  $\mathcal{F}$ is given by \eqref{3.112019610}.

   Finally, from the above two facts, we see that $(A,\{B\},\Lambda_{\hbar})$ is $\hbar$-stabilizable. This ends the proof of  Lemma \ref{yu-proposition-5-13-1}.
   \end{proof}

   \subsection{Key  proposition }

 Recall \eqref{1.162019611} and \eqref{yu-3-18-1} for the definitions of $\mathcal{B}$
   and $\mathfrak{L}_{A,\mathcal{B},\hbar}$.
\begin{proposition}\label{yu-proposition-5-13-2}
    Suppose that $(iii)$ of Theorem \ref{yu-theorem-3-14-1}
    is true. Then for any $\Lambda_{\hbar}\in\mathfrak{I}_{\hbar}\cap\mathfrak{L}_{A,\mathcal{B},\hbar}$, $(A,\{B_k\}_{k=1}^{\hbar},\Lambda_{\hbar})$ is $\hbar$-stabilizable.
\end{proposition}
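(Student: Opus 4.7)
The plan is to mimic the proof of Lemma \ref{yu-proposition-5-13-1}, with the concatenated matrix $\mathcal{B}=(B_1,\ldots,B_\hbar)$ playing the role of the single matrix $B$. Condition $(iii)$ of Theorem \ref{yu-theorem-3-14-1} reads $\mbox{Rank}(\lambda\mathbb{I}_n-A,\mathcal{B})=n$ for every $\lambda\in\sigma(A)\cap\mathbb{C}^+$, so Lemma \ref{yu-lemma-5-10-1} applied to $(A,\mathcal{B})$ yields an invertible $L\in\mathbb{R}^{n\times n}$ with controllable block $A_1\in\mathbb{R}^{r\times r}$, uncontrollable block $A_3\in\mathbb{R}^{(n-r)\times(n-r)}$, and $LB_k=(\widetilde{B}_k^\top,0)^\top$ for each $k$. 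Exactly as in the proof of Lemma \ref{yu-proposition-5-13-1}, the block-triangular form together with $(iii)$ forces $\sigma(A_3)\cap\mathbb{C}^+=\emptyset$ via the PBH test.

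Writing $Lx=(y,z)^\top$, the component $z(t)=e^{A_3 t}z(0)$ decays exponentially and acts as a vanishing forcing in the $y$-equation. As in the cascade argument at the end of the proof of Lemma \ref{yu-proposition-5-13-1}, it suffices to construct a feedback $\widetilde{\mathcal{F}}=\{\widetilde{F}_k\}_{k=1}^\hbar$ under which the homogeneous subsystem $(A_1,\{\widetilde{B}_k\}_{k=1}^\hbar,\Lambda_\hbar)$ is $\hbar$-stabilizable; then $F_k:=(\widetilde{F}_k,\ 0_{m\times(n-r)})L$ will stabilize the original system. By the $(i)\Leftrightarrow(ii)$ equivalence in Theorem \ref{yu-theorem-11-27-1}, the stabilizability of that subsystem reduces to showing that it is controllable at some finite time $T$.

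This finite-time controllability is the main obstacle: Lemma \ref{wang-qin} and Lemma \ref{yu-proposition-3-21-1} are stated for a single control matrix, whereas here $\hbar$ matrices $\widetilde{B}_k$ appear in rotation. The plan is to decouple them by exploiting the $\hbar$-periodicity of $\Lambda_\hbar$. Set $N:=q^{n,m\hbar}(A,\mathcal{B})+2$. The hypothesis $\Lambda_\hbar\in\mathfrak{L}_{A,\mathcal{B},\hbar}$ combined with the identity $t_{\hbar N}=Nt_\hbar$ (forced by $\Lambda_\hbar\in\mathfrak{I}_\hbar$) yields $Nt_\hbar<d_A\leq d_{A_1}$. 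For each $k\in\{1,\ldots,\hbar\}$, the sub-sequence of instants at which $\widetilde{B}_k$ is applied is $\{t_k+it_\hbar\}_{i=0}^{N-1}$, whose spread $(N-1)t_\hbar$ is strictly less than $d_{A_1}$. Moreover, Lemma \ref{yu-lemma-5-10-1} and the definition of $q^{\cdot,\cdot}$ give $q^{r,m}(A_1,\widetilde{B}_k)\leq q^{r,m\hbar}(A_1,\widetilde{\mathcal{B}})=q^{n,m\hbar}(A,\mathcal{B})<N$. Applying Lemma \ref{wang-qin} to $(-A_1,\widetilde{B}_k)$ on the times $\{it_\hbar\}_{i=0}^{N-1}$ then yields $\mbox{span}\{e^{-iA_1 t_\hbar}\widetilde{B}_k v:0\leq i\leq N-1,\ v\in\mathbb{R}^m\}=\mbox{Range}(\widetilde{B}_k,A_1\widetilde{B}_k,\ldots,A_1^{r-1}\widetilde{B}_k)$, an $A_1$-invariant subspace.

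To assemble the reachability of $(A_1,\{\widetilde{B}_k\}_{k=1}^\hbar,\Lambda_\hbar)$ at time $T:=t_{\hbar N+1}$, factor the individual columns as $e^{A_1(T-t_{i\hbar+k})}\widetilde{B}_k=e^{A_1(T-t_k)}e^{-iA_1 t_\hbar}\widetilde{B}_k$. Since $e^{A_1(T-t_k)}$ leaves every $A_1$-invariant subspace invariant, the contribution of the $k$-th block is exactly $V_k:=\mbox{Range}(\widetilde{B}_k,\ldots,A_1^{r-1}\widetilde{B}_k)$, and $\sum_{k=1}^\hbar V_k=\mbox{Range}(\widetilde{\mathcal{B}},\ldots,A_1^{r-1}\widetilde{\mathcal{B}})=\mathbb{R}^r$ by the controllability of $(A_1,\widetilde{\mathcal{B}})$ supplied by Lemma \ref{yu-lemma-5-10-1}. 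The multi-matrix analog of Lemma \ref{yu-lemma-5-11-2}, which follows directly from the explicit solution formula of Section \ref{subsection1.1}, then yields controllability of $(A_1,\{\widetilde{B}_k\}_{k=1}^\hbar,\Lambda_\hbar)$ at $T$, completing the reduction.
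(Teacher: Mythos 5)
Your proof is correct, but it takes a genuinely different route from the paper's. The paper never decomposes with respect to the concatenated matrix $\mathcal{B}$; instead it peels off the control matrices one at a time: it applies Lemma \ref{yu-lemma-5-10-1} to $(A,B_\hbar)$, views the instants $\{t_{j\hbar}\}_{j\in\mathbb{N}}$ as a $1$-periodic impulse system so that Lemma \ref{yu-proposition-5-13-1} stabilizes the controllable block, then (if the uncontrollable block $A_{\hbar,3}$ is still unstable) decomposes that block with respect to the relevant sub-block of $B_{\hbar-1}$ acting at $\{t_{j\hbar-1}\}_j$, and so on, producing after at most $\hbar$ rounds an upper-triangular cascade whose last diagonal block must have spectrum in the open left half-plane by a PBH argument; the feedback is assembled layer by layer (this is written out fully only for $\hbar\le 2$ and sketched for $\hbar\ge 3$). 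You instead perform a single Kalman decomposition with respect to $\mathcal{B}$, observe that every $LB_k$ then has zero lower block, and prove directly that the controllable subsystem $(A_1,\{\widetilde B_k\}_{k=1}^\hbar,\Lambda_\hbar)$ is controllable at $T=t_{\hbar N+1}$: the periodicity identity $t_{k+i\hbar}=t_k+it_\hbar$ lets you factor out $e^{A_1(T-t_k)}$, apply Lemma \ref{wang-qin} separately to each $(\pm A_1,\widetilde B_k)$ (the hypotheses check out, since $s=0$ in the definition of $\mathfrak{L}_{A,\mathcal{B},\hbar}$ gives $Nt_\hbar=t_{\hbar N}<d_A\le d_{A_1}$ and $N>q^{r,m}(A_1,\widetilde B_k)$ by \eqref{yu-3-17-5}), and sum the resulting $A_1$-invariant Krylov ranges to get $\mathbb{R}^r$ via $\mathcal{R}[A_1,\widetilde{\mathcal{B}}]=r$. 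Your argument is uniform in $\hbar$, avoids the paper's induction and case analysis, and in passing yields a multi-matrix extension of Lemmas \ref{yu-lemma-5-11-2}--\ref{yu-proposition-3-21-1} (finite-time controllability of the $\mathcal{B}$-controllable part), whereas the paper's iterated construction exhibits the stabilizing feedback more explicitly, block by block. The only places you lean on unproved assertions are the ``multi-matrix analog of Lemma \ref{yu-lemma-5-11-2}'' (which does follow at once from the solution formula, since $e^{AT}$ is invertible) and the cascade decay estimate for the forced $y$-equation, which is exactly the variation-of-constants argument the paper itself uses in Step 2 of Lemma \ref{yu-proposition-5-13-1}; neither is a gap.
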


\begin{proof}
Recall that for each $\hbar\in \mathbb{N}^+$,
$\mathfrak{I}_{\hbar}\cap\mathfrak{L}_{A,\mathcal{B},\hbar}\neq\emptyset$ (see the first note after Theorem \ref{yu-theorem-5-13-1}).
 The proof is organized by three parts: $\hbar=1$; $\hbar=2$;  $\hbar\geq 3$.
 \vskip 5pt

 \noindent
 {\it Part 1: We prove Proposition \ref {yu-proposition-5-13-2} for the case that $\hbar=1$.}

  In this case,  Proposition \ref{yu-proposition-5-13-2} follows from Lemma \ref{yu-proposition-5-13-1}
  since $\mathfrak{L}_{A,\mathcal{B},1}\subset \mathfrak{L}_{A,B}$ (with $\mathcal{B}=B$), where $\mathfrak{L}_{A,B}$ is defined by (\ref{1.18201969}).

\vskip 5pt

 \noindent
 {\it Part 2:  We prove Proposition \ref {yu-proposition-5-13-2} for the case that $\hbar=2$.}

 In this case, we have $\mathcal{B}=\left(
    \begin{array}{cccc}
    B_1 & B_2  \\
    \end{array}
    \right)$.
 Suppose that $(iii)$ of Theorem \ref{yu-theorem-3-14-1}
    is true.
            Arbitrarily fix $\Lambda_{2}:=\{t_j\}_{j\in\mathbb{N}}$
            so that
            \begin{eqnarray}\label{3.112019617}
                 \Lambda_2\in \mathfrak{I}_{2}\;\;\mbox{and}\;\;\Lambda_2\in\mathfrak{L}_{A,\mathcal{B},2}.                 \end{eqnarray}
                   {\it We aim to show the $2$-stabilizability of $(A,\{B_k\}_{k=1}^{2},\Lambda_{2})$.  }
                 Let $\Lambda:=\{t_{2j}\}_{j\in\mathbb{N}}$. By \eqref{instantset}, we have
                 \begin{eqnarray}\label{3.122019617}
                 \Lambda\in \mathfrak{I}_1.
                 \end{eqnarray}

                       We first consider the case that
           \begin{eqnarray}\label{3.156-18-1}
           \mathcal{R}[A,B_2]=n.
           \end{eqnarray}
              Two observations are given in order: First, from \cite[Lemma 3.3.7]{b22} and \eqref{3.22019617},
            we have
            \begin{eqnarray}\label{3.142019617}
            \mathcal{R}[A,B_2]=n \Rightarrow
             (iii) \;\mbox{of Theorem \ref{yu-theorem-3-14-1}  (with}\;\{B_k\}_{k=1}^\hbar=\{B_2\}\;\mbox{and any}\; \hbar\in \mathbb{N}^+).
            \end{eqnarray}
            Second, by \eqref{yu-3-17-4}, we have     $q^{n,m}(A,B_{2})\leq q^{n,2m}(A,\mathcal{B})$. This, along with the second equality in \eqref{3.112019617} and
              \eqref{yu-3-18-1}, shows that  for any $s\in\mathbb{R}^+$,
$$
    2(\mbox{Card}((s,s+d_A)\cap \Lambda)+2)\geq \mbox{Card}((s,s+d_A)\cap \Lambda_{2})\geq
    2(q^{n,2m}(A,\mathcal{B})+2)\geq 2(q^{n,m}(A,B_{2})+2),
$$
    which, along with \eqref{1.18201969} and \eqref{3.122019617}, leads to
    \begin{eqnarray}\label{3.152019617}
    \Lambda\in\mathfrak{L}_{A,B_{2}}\cap \mathfrak{I}_1,
    \end{eqnarray}
     where $\mathfrak{L}_{A,B_2}$ is given by (\ref{1.18201969}) (with $B=B_2$).   From \eqref{3.156-18-1} \eqref{3.142019617} and \eqref{3.152019617},
        we can apply  Lemma \ref{yu-proposition-5-13-1} (where $(\hbar,A,\{B\},\Lambda_\hbar)$ is replaced by
        $(1,A,\{B_2\},\Lambda)$)
                   to find  $\widehat F_{2}\in \mathbb{R}^{m\times n}$ so that the closed-loop system $(A,\{B_2 \widehat F_{2}\},\Lambda)$ (i.e.,   \eqref{yu-10-24-4} where $\hbar=1$, $\Lambda_\hbar=\Lambda$,
                   $B_{\vartheta(j)}=B_2\;\forall\;j$, $F_{\vartheta(j)}=\widehat F_2\;\forall\;j$)
                    is stable.
    From this, we can easily see that  the closed-loop system
    $(A,\{B_k\widehat F_k\}_{k=1}^{2},\Lambda_{2})$, with $\widehat F_1:=0$, (see the corresponding \eqref{yu-10-24-4} with $F_1=\widehat{F}_1$ and $F_2=\widehat{F}_2$)
     is stable. Hence
    $(A,\{B_k\}_{k=1}^{2},\Lambda_{2})$ is $2$-stabilizable.
\par

   Next, we turn to the main part of the proof:
\vskip 5pt
    {\it When $\mathcal{R}[A,B_2]:=n_2<n$,  $(A,\{B_k\}_{k=1}^{2},\Lambda_{2})$ is $2$-stabilizable.}
\vskip 5pt
    \noindent This will be carried by several steps.

   \vskip 5pt

   \noindent {\it Step 1. We give a decomposition and a related decay estimate.}

      Since $\mathcal{R}[A,B_2]:=n_2<n$, we can use Lemma \ref{yu-lemma-5-10-1} to find
      an invertible $L_2\in\mathbb{R}^{n\times m}$
    so that
\begin{equation}\label{yu-5-1-4-b}
    L_2 AL_2^{-1}=\left(
                 \begin{array}{cc}
                   A_{2,1} & A_{2,2} \\
                   0 & A_{2,3} \\
                 \end{array}
               \right),\;\;
    L_2 B_2=\left(
             \begin{array}{c}
               \widetilde{B}_{2} \\
               0 \\
             \end{array}
           \right), \; \mathcal{R}[A_{2,1},\widetilde{B}_{2}]=n_{2}
\end{equation}
     where $A_{2,1}\in\mathbb{R}^{n_{2}\times n_{2}}$, $A_{2,2}\in\mathbb{R}^{n_{2}\times(n-n_{2})}$,
    $A_{2,3}\in\mathbb{R}^{(n-n_2)\times(n-n_{2})}$ and $\widetilde{B}_2\in\mathbb{R}^{n_2\times m}$.
     Let
\begin{equation}\label{yu-5-4-b-1-b}
    L_{2}B_{1}:=\left(
                            \begin{array}{c}
                              \widehat{B}_{1} \\
                              B_{1,2} \\
                            \end{array}
                          \right),\;\;\mbox{with}\; \widehat{B}_{1}\in\mathbb{R}^{n_{2}\times m},
                          B_{1,2}\in\mathbb{R}^{(n-n_{2})\times m}.
\end{equation}

    We now claim
    \begin{eqnarray}\label{3.142019612-1}
    \Lambda\in \mathfrak{L}_{A_{2,1},\widetilde{B}_{2}}\;\;\mbox{and}\;\;\Lambda\in \mathfrak{I}_1.
    \end{eqnarray}
    Indeed,  the second conclusion in \eqref{3.142019612-1}
    follows from \eqref{instantset} directly (since $\Lambda:=\{t_{2j}\}_{j\in\mathbb{N}}$).
    Meanwhile,
       one can directly verify from  \eqref{yu-3-18-1} and \eqref{3.112019617}
     that for any $s\in\mathbb{R}^+$,
$$
    2(\mbox{Card}((s,s+d_{A_{2,1}})\cap\Lambda)+2)\geq \mbox{Card}((s,s+d_A)\cap \Lambda_{2})
    \geq 2(q^{n,2m}(A,\mathcal{B})+2)\geq 2(q^{n_{2},m}(A_{2,1},\widetilde{B}_{2})+2).
$$
    Here, we note that $d_{A_{2,1}}\geq d_A$ by the second equality in (\ref{yu-3-17-5}). This implies that
\begin{equation}\label{yu-5-8-1}
    \mbox{Card}((s,s+d_{A_{2,1}})\cap\Lambda)\geq q^{n_{2},m}(A_{2,1},\widetilde{B}_{2})\;\forall\; s\in\mathbb{R}^+.
\end{equation}
    From \eqref{1.18201969} and \eqref{yu-5-8-1}, we obtain the first conclusion in \eqref{3.142019612-1}.

Next, from the last equality in \eqref{yu-5-1-4-b}, we can use \eqref{3.142019617}
 (where $(A,B_2,n)$ is replaced by $(A_{2,1},\widetilde{B}_{2},n_2)$)
  to get $(iii)$ of Theorem \ref{yu-theorem-3-14-1} (where $(\hbar,A,\{B_k\}_{k=1}^\hbar)$ is replaced by
  $(1,A_{2,1},\{\widetilde{B}_{2}\})$).
   From this and  \eqref{3.142019612-1}, we can use
     Lemma \ref{yu-proposition-5-13-1} (where $(\hbar,A,\{B\},\Lambda_\hbar)$ is replaced by
     $(1,A_{2,1},\{\widetilde{B}_{2}\},\Lambda)$)
              to find $\widetilde{F}_{2}\in\mathbb{R}^{m\times n_{2}}$ so that  the closed-loop system  $(A_{2,1},\{\widetilde{B}_{2}\widetilde{F}_{2}\},\Lambda)$ (see the corresponding  \eqref{yu-10-24-4} with $\hbar=1$, $A=A_{2,1}$,
              $B_1=\widetilde{B}_2$, $F_1=\widetilde{F}_2$)
                             is stable.
    Thus, there is $\mu_{2}>0$ and $M_{2}>0$ so that
    \begin{equation}\label{yu-5-2-1-b}
    \|\widetilde{S}_{2}(t,s)\|_{\mathcal{L}(\mathbb{R}^{n-n_2})}\leq M_{2}e^{-\mu_{2}(t-s)}
    \;\mbox{for any}\;\;t\geq s\geq 0,
\end{equation}
     where $\widetilde{S}_{2}(\cdot,\cdot)$ is the transition matrix
     generated by the closed-loop system $(A_{2,1},\{\widetilde{B}_{2}\widetilde{F}_{2}\},\Lambda)$.

    \vskip 5pt

    \noindent{\it Step 2. With notations in \eqref{yu-5-1-4-b} and \eqref{yu-5-4-b-1-b}, we prove that when $\sigma(A_{2,3})\cap\mathbb{C}^+=\emptyset$,
    $(A,\{B_k\}_{k=1}^{2},\Lambda_{2})$ is $2$-stabilizable.}

Since $\sigma(A_{2,3})\cap\mathbb{C}^+=\emptyset$, there is  $\mu'_2>0$ and $M_{2}'>0$ so that
\begin{equation}\label{yu-5-1-2-b}
    \|e^{A_{2,3} t}\|_{\mathcal{L}(\mathbb{R}^{n-n_2})}\leq M_{2}' e^{-\mu'_2 t}\;\mbox{for any}\;t\in\mathbb{R}^+.
\end{equation}
    We consider the following closed-loop system (with $\widetilde{F}_{2}$ given in Step 1):
\begin{equation}\label{yu-5-2-2-b}
\begin{cases}
    y_{2}'(t)=A_{2,1}y_{2}(t)+A_{2,2}z_{2}(t),&t\in\mathbb{R}^+\setminus\Lambda_{2},\\
    z_{2}'(t)=A_{2,3}z_{2}(t),&t\in\mathbb{R}^+,\\
    \triangle y_{2}(t_j)=\widetilde{B}_{2}\widetilde{F}_{2}y_{2}(t_j),&\mbox{if}\;\vartheta(j)=2,\\
    \triangle y_{2}(t_j)=0,&\mbox{if}\;\vartheta(j)\neq 2.
\end{cases}
\end{equation}
       Let
\begin{eqnarray}\label{3.182019612}
   \mathcal{F}:=\{F_k\}_{k=1}^2\;\;\mbox{with}\; F_1:=0,\;\; F_{2}:=\left(
                \begin{array}{cc}
                  \widetilde{F}_{2} & 0_{m\times(n-n_{2})} \\
                \end{array}
              \right)L_{2}.
\end{eqnarray}
Two facts are given in order: First, by \eqref{yu-5-2-2-b}, (\ref{yu-5-2-1-b}) and (\ref{yu-5-1-2-b}),
   there is $\mu''_2>0$ and $M_2''>0$ so that each solution
   $(y_2(\cdot),z_2(\cdot))^\top$ (to \eqref{yu-5-2-2-b}) satisfies
   \begin{eqnarray*}
   \|(y_2(t),z_2(t))^\top\|_{\mathbb{R}^n}\leq M''_{2}e^{-\mu''_2 t}\|(y_2(0),z_2(0))^\top\|_{\mathbb{R}^n}\;\forall\; t\geq 0.
   \end{eqnarray*}
   Second, $(y_2(\cdot),z_2(\cdot))^\top$ solves \eqref{yu-5-2-2-b} if and only if
   $x(\cdot):=L^{-1}_{2}(y_2(\cdot),z_2(\cdot))^\top$ solves
   \eqref{yu-10-24-4} where $\hbar=2$, $\Lambda_{\hbar}=\Lambda_{2}$ and $\mathcal{F}$ is
   given by \eqref{3.182019612}. From these two facts, we see that
    $(A,\{B_k\}_{k=1}^2,\Lambda_{2})$ is $2$-stabilizable.

  \vskip 5pt

    \noindent{\it Step 3.  With notations in \eqref{yu-5-1-4-b} and \eqref{yu-5-4-b-1-b},
    we prove that $(A,\{B_k\}_{k=1}^{2},\Lambda_{2})$ is $2$-stabilizable,     when $\sigma(A_{2,3})\cap\mathbb{C}^+\neq\emptyset$ and $\mathcal{R}[A_{2,3},B_{1,2}]=n-n_{2}$.}

   We can use a very similar way used in the proof of \eqref{3.142019612-1} to  show
     $\Lambda\in \mathfrak{L}_{A_{2,3},B_{1,2}}\cap\mathfrak{I}_1$ for this case.
     Meanwhile, since $\mathcal{R}[A_{2,3},B_{1,2}]=n-n_{2}$,
     we can use \eqref{3.142019617} (where $(A,B_2,n)$ is replaced by $(A_{2,3},B_{1,2},n-n_2)$) to get  $(iii)$ of  Theorem \ref{yu-theorem-3-14-1} (where $(\hbar,A,\{B_k\}_{k=1}^\hbar)$ is replaced by $(1,A_{2,3},\{B_{1,2}\})$).
          From these,  we can apply
    Lemma \ref{yu-proposition-5-13-1}
    (where $(\hbar,A,\{B\},\Lambda_\hbar)$ is replaced by $(1,A_{2,3},\{B_{1,2}\},\Lambda)$)
                      to find $F_{1,2}\in\mathbb{R}^{m\times(n-n_{2})}$ so that
     for some $\mu_{1,2}>0$ and $M_{1,2}>0$,
      each solution ${w}_{1,2}(\cdot)$ to the closed-loop system
       $(A_{2,3},\{B_{1,2}F_{1,2}\},\Lambda)$
      (see the corresponding  \eqref{yu-10-24-4}
      with $\hbar=1$, $A=A_{2,3}$, $B_1=B_{1,2}$,
      $F_1=F_{1,2}$) satisfies
\begin{equation}\label{yu-5-4-1-b}
    \|{w}_{1,2}(t)\|_{\mathbb{R}^{(n-n_2)}}
    \leq M_{1,2}e^{-\mu_{1,2}(t-s)}
    \|{w}_{1,2}(s)\|_{\mathbb{R}^{(n-n_2)}}
    \;\;\mbox{for any}\;\;t\geq s\geq0.
\end{equation}
          Next, let $\widetilde{S}_{1,2}(\cdot,\cdot)$
    be the transition matrix of the system:
\begin{equation}\label{3.222019613-11}
\begin{cases}
    \widetilde{w}'_{1,2}(\sigma)=A_{2,3}\widetilde{w}_{1,2}(\sigma), &\sigma\in\mathbb{R}^+\setminus \{t_{2j-1}\}_{j\in\mathbb{N}^+},\\
    \triangle \widetilde{w}_{1,2}(t_{2j-1})=B_{1,2}F_{1,2}
    \widetilde{w}_{1,2}(t_{2j-1}),&j\in\mathbb{N}^+.
\end{cases}
\end{equation}
One can easily check that  $w_{1,2}(\cdot)$ solves the closed-loop system
       $(A_{2,3},\{B_{1,2}F_{1,2}\},\Lambda)$ if and only if
 $w_{1,2}(\cdot+t_2-t_1)$ solves \eqref{3.222019613-11}. This, along with  (\ref{yu-5-4-1-b}), leads to
    \begin{equation}\label{yu-5-4-2-b}
    \|\widetilde{S}_{1,2}(\sigma,0)\|_{\mathcal{L}(\mathbb{R}^{n-n_2})}\leq M_{1,2}e^{-\mu_{1,2}\sigma}\;\;\mbox{for any}\;\;\sigma\in\mathbb{R}^+.
\end{equation}

We now consider the closed-loop system:
\begin{equation}\label{yu-5-4-3-b}
\begin{cases}
    \left(
             \begin{array}{c}
               y_{2}(t) \\
               z_{2}(t) \\
             \end{array}
           \right)'=\left(
                      \begin{array}{cc}
                        A_{2,1} & A_{2,2} \\
                        0 & A_{2,3} \\
                      \end{array}
                    \right)\left(
             \begin{array}{c}
               y_{2}(t) \\
               z_{2}(t) \\
             \end{array}
           \right),&t\in\mathbb{R}^+\setminus\Lambda_{2}, \\
    \triangle y_{2}(t_j)=\widetilde{B}_{2}\widetilde{F}_{2}y_{2}(t_j),&\mbox{if}\;
    \vartheta(j)=2,\\
    \triangle z_{2}(t_j)=B_{1,2}F_{1,2}z_{2}(t_j),
    &\mbox{if}\;\vartheta(j)=1,\\
    \triangle y_{2}(t_j)=0,&\mbox{if}\;\vartheta(j)\neq 2, \\
    \triangle z_{2}(t_j)=0,&\mbox{if}\;\vartheta(j)\neq 1.
\end{cases}
\end{equation}
    Let
\begin{eqnarray}\label{3.242019612-11}
    \mathcal{F}:=\{F_k\}_{k=1}^2,\; \mbox{with}\;
    F_{1}:=\left(
                   \begin{array}{cc}
                     0_{m\times n_{2}} & F_{1,2} \\
                   \end{array}
                 \right)L_{2},
    \;\;F_{2}:=\left(
                \begin{array}{cc}
                  \widetilde{F}_{2} & 0_{m\times(n-n_{2})} \\
                \end{array}
              \right)L_{2}.
\end{eqnarray}
Several facts are given in order: First, since  $\vartheta(2j-1)=1\;\forall\; j\in\mathbb{N}^+$ (with $\hbar=2$), the solution $(y_2(\cdot), z_2(\cdot))^\top$ to the equation (\ref{yu-5-4-3-b}) can be expressed by
\begin{equation*}
    (y_{2}(t), z_{2}(t))^\top=\Big(\widetilde{S}_{2}(t,0)y_{2}(0)+\int_0^t\widetilde{S}_{2}(t,s)A_{2,2}
    z_{2}(s)ds,\;
   \widetilde{S}_{1,2}(t,0)z_{2}(0)\Big)^\top\;\forall\; t\geq 0.
\end{equation*}
  Second, it follows by the first fact, (\ref{yu-5-2-1-b}) and (\ref{yu-5-4-2-b}) that for some  $\mu_{1,2}'>0$  and $M'_{1,2}>0$, any solution $(y_{2}(\cdot), z_{2}(\cdot))^\top$ to (\ref{yu-5-4-3-b})
  satisfies
\begin{eqnarray*}
\|(y_{2}(t), z_{2}(t))^\top\|_{\mathbb{R}^n}\leq M'_{1,2}e^{-\mu_{1,2}' t}\|(y_{2}(0), z_{2}(0))^\top\|_{\mathbb{R}^n}\;\;\forall\; t\geq 0,
\end{eqnarray*}
i.e., the closed-loop system (\ref{yu-5-4-3-b}) is stable.
  Third, $(y_2(\cdot), z_2(\cdot))^\top$ solves (\ref{yu-5-4-3-b}) if and only if
  $x(\cdot):=L_{2}^{-1}(y_2(\cdot), z_2(\cdot))^\top$ solves
   \eqref{yu-10-24-4}
   where $\hbar=2$,  $\Lambda_{\hbar}=\Lambda_{2}$ and $\mathcal{F}$ is given by
  \eqref{3.242019612-11}.

  Finally, the last two facts above leads to the $2$-stabilization of $(A,\{B_{k}\}_{k=1}^{2},\Lambda_{2})$.

\vskip 5pt

    \noindent{\it Step 4. With notations in \eqref{yu-5-1-4-b} and \eqref{yu-5-4-b-1-b},
    we prove that $(A,\{B_k\}_{k=1}^{2},\Lambda_{2})$ is $2$-stabilizable,     when $\sigma(A_{2,3})\cap\mathbb{C}^+\neq\emptyset$ and $\mathcal{R}[A_{2,3},B_{1,2}]:=n_1<n-n_{2}$.}

The proof of this step is divided into several sub-steps.

\vskip 5pt

\noindent{\it Sub-step 4.1. We give another decomposition and a related decay estimate.}

   Since  $n_1<n-n_{2}$, we can use Lemma \ref{yu-lemma-5-10-1} to find  an
    invertible  $\widetilde{L}_{1}\in\mathbb{R}^{(n-n_{2})\times (n-n_{2})}$ so that
\begin{eqnarray}\label{3.262019613}
    \widetilde{L}_{1}A_{2,3}\widetilde{L}^{-1}_{1}
    =\left(                                                                       \begin{array}{cc}                                                                                 A_{1,1} & A_{1,2} \\                                                                                 0 & A_{1,3} \\                                                                               \end{array}                                                                             \right),\;\;
    \widetilde{L}_{1}{B}_{1,2}
    =\left(
    \begin{array}{c}
    \widetilde{B}_{1} \\
     0 \\
    \end{array}
     \right),\;\;
      \mathcal{R}[A_{1,1},\widetilde{B}_{1}]=n_{1},                                                      \end{eqnarray}
    where $A_{1,1}\in\mathbb{R}^{n_{1}\times n_{1}}$,
    $A_{1,2}\in\mathbb{R}^{n_{1}\times (n-n_1-n_{2})}$, $A_{1,3}\in\mathbb{R}^{(n-n_{1}-n_{2})\times (n-n_{1}-n_{2})}$
    and $\widetilde{B}_{1}\in\mathbb{R}^{n_{1}\times m}$. Let
\begin{eqnarray}\label{3.262019614}
    L_{1}:=\left(
                  \begin{array}{cc}
                    \mathbb{I}_{n_{2}} & 0 \\
                    0 & \widetilde{L}_{1} \\
                  \end{array}
                \right).
\end{eqnarray}
    By \eqref{3.262019613}, \eqref{3.262019614}, (\ref{yu-5-1-4-b}) and (\ref{yu-5-4-b-1-b}), there is  $\widehat{A}_{2,1}\in\mathbb{R}^{n_{2}\times n_1}$ and
    $\widehat{A}_{2,2}\in \mathbb{R}^{n_{2}\times (n-n_{1}-n_{2})}$  so that
\begin{equation}\label{yu-5-5-7-b}
    L_{1}L_{2}A(L_{1}L_{2})^{-1}=
    \left(
      \begin{array}{ccc}
        A_{2,1} & \widehat{A}_{2,1} & \widehat{A}_{2,2} \\
        0 & A_{1,1} & A_{1,2} \\
        0 & 0 & A_{1,3} \\
      \end{array}
    \right),\;\;
    L_{1}L_{2}\left(
                          \begin{array}{cc}
                            B_{1} & B_{2} \\
                          \end{array}
                        \right)=\left(
                                  \begin{array}{cc}
                                    \widehat{B}_{1} & \widetilde{B}_{2} \\
                                    \widetilde{B}_{1} & 0 \\
                                    0 & 0 \\
                                  \end{array}
                                \right).
\end{equation}

   By a very similar way used in the proof of \eqref{3.142019612-1}, we can show
   $\Lambda\in \mathfrak{L}_{A_{1,1},\widetilde{B}_{1}}\cap \mathfrak{I}_1$.
     Meanwhile, by the last equality in \eqref{3.262019613},
     we can use \eqref{3.142019617} (where $(A,B_2,n)$ is replaced by $(A_{1,1}, \widetilde{B}_{1}, n_1)$)
     to get  $(iii)$ of  Theorem \ref{yu-theorem-3-14-1} (where $(\hbar,A,\{B_k\}_{k=1}^\hbar)$ is replaced by
     $(1,A_{1,1},\{\widetilde{B}_{1}\})$).
               From these, we can use
   Lemma \ref{yu-proposition-5-13-1} (where $(\hbar,A,\{B\},\Lambda_\hbar)$ is replaced by
   $(1,A_{1,1},\{\widetilde{B}_{1}\},\Lambda)$)
   to find $\widetilde{F}_{1}\in \mathbb{R}^{m\times n_{1}}$ so that for some $\mu_1>0$ and $M_1>0$,
   each solution $w_1(\cdot)$ to
   the closed-loop system $(A_{1,1},\{\widetilde{B}_{1} \widetilde{F}_{1}\},\Lambda)$ (see the corresponding  \eqref{yu-10-24-4}
    with $\hbar=1$, $A=A_{11}$, $B_1=\widetilde{B}_1$, $F_1=\widetilde{F}_1$)
       satisfies
   \begin{equation}\label{yu-5-5-2-b}
    \|w_{1}(t)\|_{\mathbb{R}^{n_1}}\leq M_{1}e^{-\mu_{1}(t-s)}\|w_{1}(s)\|_{\mathbb{R}^{n_1}}\;\;\mbox{for any}\;\;t\geq s\geq 0.
\end{equation}
    Next, we let $\widetilde{S}_{1}(\cdot,\cdot)$ be the transition matrix of the system:
    \begin{equation}\label{3.312019613-12}
\begin{cases}
    \widetilde{w}_{1}'(\sigma)=A_{1,1}\widetilde{w}_{1}(\sigma),
    &\sigma\in\mathbb{R}^+\setminus\{t_{2j-1}\}_{j\in\mathbb{N}^+},\\
    \triangle\widetilde{w}_{1}(t_{2j-1})=
    \widetilde{B}_{1}\widetilde{F}_{1}\widetilde{w}_{1}(t_{2j-1}),&j\in\mathbb{N}^+.
\end{cases}
\end{equation}
One can easily check that  $w_{1}(\cdot)$ solves the closed-loop system
       $(A_{1,1},\{\widetilde{B}_{1} \widetilde{F}_{1}\},\Lambda)$ if and only if
 $w_{1}(\cdot+t_2-t_1)$ solves \eqref{3.312019613-12}. This, along with
 (\ref{yu-5-5-2-b}), yields
 \begin{equation}\label{yu-5-5-1-b}
    \|\widetilde{S}_{1}(t,s)\|_{\mathcal{L}(\mathbb{R}^{n_1})}\leq M_{1}e^{-\mu_{1}(t-s)}\;\mbox{for any}\;t\geq s\geq 0.
\end{equation}
\vskip 5pt

\noindent{\it Sub-step 4.2. We prove
   \begin{equation}\label{yu-5-8-2}
    \sigma(A_{1,3})\cap \mathbb{C}^+=\emptyset.
\end{equation}}

     By contradiction, suppose that \eqref{yu-5-8-2} were not true. Then there would be
a number $\lambda_0$ so that
\begin{eqnarray}\label{3.342019614}
\lambda_0\in \sigma(A_{1,3})\cap\mathbb{C}^+\;\;\mbox{and}\;\;\mbox{Rank}
    (\lambda_0\mathbb{I}_{n-n_1-n_2}-A_{1,3})<n-n_1-n_2.
\end{eqnarray}
Next, with notations in \eqref{yu-5-1-4-b}, \eqref{yu-5-4-b-1-b}, \eqref{3.262019613} and
\eqref{yu-5-5-7-b}, we let
\begin{eqnarray}\label{3.342-19614}
    \mathcal{A}:=\left(
                   \begin{array}{cc}
                     A_{2,1} & \widehat{A}_{2,1} \\
                     0 & A_{1,1} \\
                   \end{array}
                 \right),\;\;
    \widehat{\mathcal{A}}:=\left(
                   \begin{array}{c}
                     \widehat{A}_{2,2} \\
                     A_{1,2} \\
                   \end{array}
                 \right),\;\;\widetilde{\mathcal{B}}:
                 =\left(
                \begin{array}{cc}
                \widehat{B}_1 & \widetilde{B}_2 \\
                \widetilde{B}_1 & 0 \\
                 \end{array}
                      \right).
\end{eqnarray}
From \eqref{yu-5-5-7-b} and \eqref{3.342-19614}, we can easily check that
     \begin{eqnarray}\label{yu-5-8-3}
    \mbox{Rank}\;(\lambda_0\mathbb{I}_{n}-A,\mathcal{B})
    &=&\mbox{Rank}\;(\lambda_0\mathbb{I}_n-
    L_1L_2A
    (L_1L_2)^{-1}, L_1L_2\left(
      \begin{array}{cc}
     B_1&B_2  \\
       \end{array}
        \right))\nonumber\\
    &=&\mbox{Rank}\left(\left(
     \begin{array}{cc}
     \lambda_0\mathbb{I}_{n_1+n_2}-\mathcal{A} & -\widehat{\mathcal{A}}\\                          0&\lambda_0\mathbb{I}_{n-n_1-n_2}-A_{1,3}\\
      \end{array}
       \right),\left(
       \begin{array}{cc}
        \widetilde{\mathcal{B}} \\
         0\\
          \end{array}
          \right)\right).
\end{eqnarray}
    Since $ \mbox{Rank}\;((\lambda_0\mathbb{I}_{n_1+n_2}-\mathcal{A},-\widehat{\mathcal{A}}),
    \widetilde{\mathcal{B}})\leq n_1+n_2$, it follows from (\ref{yu-5-8-3}) and the second inequality in
    \eqref{3.342019614} that
\begin{eqnarray*}
    \mbox{Rank}\;(\lambda_0\mathbb{I}_n-A,\mathcal{B})\leq \mbox{Rank}\;((\lambda_0\mathbb{I}_{n_1+n_2}-\mathcal{A},-\widehat{\mathcal{A}}),
    \widetilde{\mathcal{B}})+\mbox{Rank}
    (\lambda_0\mathbb{I}_{n-n_1-n_2}-\widetilde{A}_1)<n.
\end{eqnarray*}
   This contradicts  $(iii)$ of Theorem \ref{yu-theorem-3-14-1} (which is the assumption of Proposition \ref{yu-proposition-5-13-2}).
       So (\ref{yu-5-8-2}) is true.

   \vskip 5pt

\noindent{\it Sub-step 4.3. We finish the proof of Step 4.}

\par
    By (\ref{yu-5-8-2}), there is  $\mu'_{1}>0$ and $M_{1}'>0$ so that
\begin{equation}\label{yu-5-4-5-b}
    \|e^{A_{1,3}t}\|_{\mathcal{L}(\mathbb{R}^{n-n_1-n_2})}\leq M_{1}'e^{-\mu'_{1}t}\;\;\mbox{for any}\;\;t\in\mathbb{R}^+.
\end{equation}
  With $\widetilde{F}_1$ and $\widetilde{F}_2$ given by Sub-step 4.1 and Step 1,  we consider the closed-loop system:
\begin{equation}\label{yu-5-5-5-b}
\begin{cases}
    \left(
      \begin{array}{c}
        y_{2}(t) \\
        y_{1}(t) \\
        z_{1}(t) \\
      \end{array}
    \right)'=\left(
    \begin{array}{ccc}
    A_{2,1} & \widehat{A}_{2,1} & \widehat{A}_{2,2} \\
    0 & A_{1,1} & A_{1,2} \\
    0 & 0 & A_{1,3} \\
     \end{array}
    \right)\left(
      \begin{array}{c}
        y_{2}(t) \\
        y_{1}(t) \\
        z_{1}(t) \\
      \end{array}
    \right),&t\in\mathbb{R}^+\setminus\Lambda_{2},\\
    \triangle y_{2}(t_j)=\widetilde{B}_{2}\widetilde{F}_{2}y_{2}(t_j),&\mbox{if}\;\vartheta(j)=2,\\
    \triangle y_{1}(t_j)=\widetilde{B}_{1}\widetilde{F}_{1} y_{1}(t_j),
    &\mbox{if}\;\vartheta(j)=1,\\
    \triangle y_{2}(t_j)=0,&\mbox{if}\;\vartheta(j)\neq 2,\\
    \triangle y_{1}(t_j)=0,&\mbox{if}\;\vartheta(j)\neq 1,\\
    \triangle z_{1}(t_j)=0,&j\in\mathbb{N}^+
\end{cases}
\end{equation}
     and let
   \begin{eqnarray}\label{3.402019614}
       F_{1}:=\left(
    \begin{array}{ccc}
    0_{m\times n_{2}} & \widetilde{F}_{1} & 0_{m\times(n-n_{1}-n_{2})} \\
    \end{array}
    \right)L_{1}L_{2},\;
    F_{2}:=\left(
    \begin{array}{cc}
    \widetilde{F}_{2} & 0_{m\times(n-n_{2})} \\
    \end{array}
    \right)L_{1}L_{2},
\end{eqnarray}
   Several facts are given in order: First,    (\ref{yu-5-5-5-b}) is equivalent to
\begin{equation*}\label{yu-5-5-8-b}
\begin{cases}
    y_{2}(t)=\widetilde{S}_{2}(t,0)y_{2}(0)+\int_0^t\widetilde{S}_{2}(t,s)
    \left(\widehat{A}_{2,1}y_{1}(s)ds+\widehat{A}_{2,2}z_{1}(s)\right)ds,\\
    y_{1}(t)=\widetilde{S}_{1}(t,0)y_{1}(0)
    +\int_0^t\widetilde{S}_{1}(t,s)A_{1,2}z_{1}(s)ds,\\
    z_{1}(t)=e^{A_{1,3}t}z_{1}(0).
\end{cases}
    \mbox{for any}\;t\in\mathbb{R}^+.
\end{equation*}
  Second, from the first fact,  (\ref{yu-5-2-1-b}), (\ref{yu-5-5-1-b}) and (\ref{yu-5-4-5-b}),
    we can find  $\mu_1''>0$ and  $M''_{1}>0$ so that
    each solution $(y_2(\cdot),y_1(\cdot),z_1(\cdot))^\top$ to  (\ref{yu-5-5-5-b})
     satisfies
\begin{eqnarray*}\label{yu-5-5-6-b}
\|( y_{2}(t), y_{1}(t),z_{1}(t))^{\top}\|_{\mathbb{R}^n}
\leq M''_{1}e^{-\mu_1'' t}\|( y_{2}(0), y_{1}(0),z_{1}(0))^{\top}\|_{\mathbb{R}^n}\;\;\mbox{for each}\;\;t\in\mathbb{R}^+.
\end{eqnarray*}
Third,   $(y_2(\cdot), y_1(\cdot),z_1(\cdot))^\top$ solves \eqref{yu-5-5-5-b} if and only if
  $x(\cdot):=(L_1L_{2})^{-1}(y_2(\cdot), y_1(\cdot),z_1(\cdot))^\top$ solves
   \eqref{yu-10-24-4} where $\Lambda_{\hbar}=\Lambda_{2}$ and $\mathcal{F}=\{F_k\}_{k=1}^2$ with
   $F_1$ and $F_2$
    given by \eqref{3.402019614}.

   Finally, the last two facts above lead to the $2$-stabilization of $(A,\{B_{k}\}_{k=1}^{2},\Lambda_{2})$.

  \vskip 5pt
  \noindent {\it Part 3:  We prove Proposition \ref {yu-proposition-5-13-2} for the case that $\hbar\geq 3$}

Arbitrarily fix $\hbar\geq 3$ and $\Lambda_{\hbar}\in \mathfrak{I}_{\hbar}\cap \mathfrak{L}_{A,\mathcal{B},\hbar}$.
 Let $\Lambda:=\{t_{j\hbar}\}_{j\in\mathbb{N}}\in\mathfrak{I}_1$. Denote
 $[C,D]:=(D,CD,\cdots,C^{p-1}D)$ for any $C\in\mathbb{R}^{p\times p}$ and $D\in\mathbb{R}^{p\times q}$ (with
 $p,q\in\mathbb{N}^+$).
 We start from $(A,B_\hbar)$. When $[A,B_\hbar]$ is full of rank, we turn to
\eqref{3.156-18-1} and use the same way there to get the desired stabilization. When $[A,B_\hbar]$ is not full of rank, we use Lemma \ref{yu-lemma-5-10-1} to get a decomposition for the pair $(A,B_\hbar)$
with an uncontrollable part $A_{\hbar,3}$ (see Step 1 in Part 2). Next, if $\sigma(A_{\hbar,3})\cap \mathbb{C}^+=\emptyset$, then we turn to Step 2 in Part 2 to get the desired
stabilization. If $\sigma(A_{\hbar,3})\cap \mathbb{C}^+\neq\emptyset$, we take
$B_{\hbar-1}$ into the consideration. Similar to \eqref{yu-5-4-b-1-b}, we divide $B_{\hbar-1}$ into two parts:
$\widehat{B}_{\hbar}$ and ${B}_{\hbar-1,\hbar}$. When $[A_{\hbar,3},{B}_{\hbar-1,\hbar}]$ is full of rank, we turn to Step 3 in Part 2 to get the desired  stabilization. When $[A_{\hbar,3},{B}_{\hbar-1,\hbar}]$ is not full of rank,
we turn to another decomposition for $(A_{\hbar,3},{B}_{\hbar-1,\hbar})$ with an uncontrollable part $A_{\hbar-1,3}$ (see Sub-step 4.1 in Part 2).
\par
We continue the above process. After $\hbar$ steps, if we are not done, then, based on
 $(A,B_{\hbar})$, $(A,B_{\hbar},B_{\hbar-1}), \ldots, (A,B_{\hbar},B_{\hbar-1},\cdots,B_1)$, we can get uncontrollable parts: $A_{\hbar,3},
A_{\hbar-1,3},\dots, A_{1,3}$ with $\sigma(A_{k,3})\cap \mathbb{C}^+\neq\emptyset$
for all $\hbar\geq k\geq 2$. Then we can use a very similar way as that used in  Sub-step 4.2 in Part 2, to get
$\sigma(A_{1,3})\cap \mathbb{C}^+\neq\emptyset$. From this, we can turn to Sub-step 4.3 in Part 2
to get the desired stabilization.

\vskip 5pt
In summary, we  end the proof of Proposition \ref{yu-proposition-5-13-2}. \end{proof}

\subsection {Proofs of Theorems \ref{yu-theorem-3-14-1}-\ref{yu-theorem-5-13-1}}

\begin{proof}[Proof of Theorem \ref{yu-theorem-3-14-1}]

From Proposition \ref{yu-proposition-5-13-2}, $(iii)\Rightarrow (i)$ follows at once. It is trivial that
    $(ii)\Rightarrow (iii)$. We now show  $(i)\Rightarrow (ii)$.

    Recall \eqref{1.162019611} for $\mathcal{B}$ which is in $\mathbb{R}^{n\times(m\hbar)}$.
        Without loss of generality, we  assume  $\mathcal{R}[A,\mathcal{B}]=\hat n<n$. (For otherwise, $(ii)$ of Theorem \ref{yu-theorem-3-14-1} follows from \cite[Lemma 3.3.7]{b22} at once.)
        Thus, by Lemma \ref{yu-lemma-5-10-1},
     there is an invertible matrix $L\in\mathbb{R}^{n\times n}$ such that
\begin{eqnarray}\label{3.46617wan}
    LAL^{-1}=\left(
               \begin{array}{cc}
                 A_1 & A_2 \\
                 0 & A_3 \\
               \end{array}
             \right),
    \;\;L\mathcal{B}=\left(
                       \begin{array}{cccc}
                         LB_1  & \cdots & LB_{\hbar} \\
                       \end{array}
                     \right)=\left(
                               \begin{array}{c}
                                 \widehat{\mathcal{B}} \\
                                 0 \\
                               \end{array}
                             \right),\; \mathcal{R}[A_1,\widehat{\mathcal{B}}]=\hat{n},
\end{eqnarray}
    where $A_1\in\mathbb{R}^{\hat{n}\times \hat{n}}$, $A_2\in\mathbb{R}^{\hat{n}\times (n-\hat{n})}$, $A_3\in\mathbb{R}^{(n-\hat{n})\times (n-\hat{n})}$ and $\widehat{\mathcal{B}}\in\mathbb{R}^{\hat{n}\times (m\hbar)}$.

    By the first two equalities in \eqref{3.46617wan} and by $(i)$ of Theorem \ref{yu-theorem-3-14-1}, we have
     $\sigma(A_3)\cap \mathbb{C}^+=\emptyset$ which implies
    \begin{eqnarray}\label{3.47617wan}
    \mbox{Rank}\;(\lambda \mathbb{I}_{n-\hat n}-A_3)=n-\hat n\;\;\mbox{for all}\;\;\lambda\in \mathbb{C}^+.
    \end{eqnarray}
    Meanwhile, by the last equality in \eqref{3.46617wan}, we can use \cite[Lemma 3.3.7]{b22} to get
    \begin{eqnarray}\label{3.48618wan}
     \mbox{Rank}\;(\lambda \mathbb{I}_{\hat n}-A_1, \widehat{\mathcal{B}})=\hat n \;\;\mbox{for all}\;\;\lambda\in \mathbb{C}.
    \end{eqnarray}

    Finally, by the first two equalities in \eqref{3.46617wan} and by \eqref{3.47617wan}
    and \eqref{3.48618wan}, we have
          \begin{eqnarray*}
     n\geq\mbox{Rank}\;(\lambda\mathbb{I}_n-A,\mathcal{B})\geq \mbox{Rank}\;(\lambda\mathbb{I}_{\hat{n}}-A_1,\widehat{\mathcal{B}})
    +\mbox{Rank}\;(\lambda\mathbb{I}_{n-\hat{n}}-A_3)=n\; \mbox{for all}\; \lambda\in\mathbb{C}^+,
     \end{eqnarray*}
     which leads to $(ii)$ of Theorem \ref{yu-theorem-3-14-1}.

\vskip 5pt
    In summary, we end the proof of Theorem \ref{yu-theorem-3-14-1}.
\end{proof}

\begin{proof}[Theorem \ref{yu-theorem-5-13-1}]
 By Theorem \ref{yu-theorem-3-14-1} and  Proposition \ref{yu-proposition-5-13-2}, Theorem \ref{yu-theorem-5-13-1} follows at once. This ends the proof of Theorem \ref{yu-theorem-5-13-1}.
\end{proof}

\section{Conclusions and perspectives}

Inspired by phenomena of multi-person cooperations, we set up a periodic impulse control system \eqref{yu-10-24-1}. Then we studied systematically the stabilization for this system:
 First, we obtained
 several necessary and sufficient conditions on  the stabilization of the system $(A, \{B_k\}_{k=1}^\hbar, \Lambda_{\hbar})$, and then gave a way to build up feedback laws.  Second, we got several  necessary and sufficient conditions on
the stabilization for the pair $(A, \{B_k\}_{k=1}^\hbar)$ and then provided locations where impulse instants should stay.

 In the studies of $(A, \{B_k\}_{k=1}^\hbar, \Lambda_{\hbar})$, the main ideas are originally from  the classical
LQ theory. But we modified the cost functional and derived a discrete dynamic programming principle which leads to the variant of Riccati's equation    \eqref{yu-11-26-1}. Both the discrete dynamic programming principle
and the variant of Riccati's equation differ from the classical ones.
 In the studies of $(A, \{B_k\}_{k=1}^\hbar)$, our method is based on the repeated use of Kalman controllable decomposition and a result in \cite{b2}.

Several open issues are given in order:

\begin{itemize}
\item Extensions of our main results to some infinite-dimensional systems.

\item Applications of our main results to non-linear systems.

\item  The relationship between the feedback law designed by usual LQ theory for the
control system (\ref{421usualcontrolsystem}) and our feedback law (\ref{yu-12-4-1-b}) (with
 $\hbar=1$, $B_1=B$, $\Lambda_1=\{i\tau\}_{j\in\mathbb{N}}$ $(\tau>0)$)  designed
by discrete LQ problem.
\end{itemize}

\end{document}